\providecommand{\openbox}{\leavevmode
  \hbox to.77778em{%
  \hfil\vrule
  \vbox to.575em{\hrule width.5em\vfil\hrule}%
  \vrule\hfil}}
\DeclareRobustCommand{\qed}{%
  \ifmmode
    \eqno \def\@badmath{$$}
    \let\eqno\relax \let\leqno\relax \let\veqno\relax
    \hbox{\openbox}%
  \else
    \leavevmode\unskip\penalty9999 \hbox{}\nobreak\hfill
    \quad\hbox{\openbox}%
  \fi
}
\DeclareMathOperator{\ldet}{ldet}
\DeclareMathOperator{\Trace}{Trace}
\newtheorem{jconjecture}{Conjecture}
\let\c@proposition\c@theorem
\let\c@corollary\c@theorem
\let\c@lemma\c@theorem
\let\c@definition\c@theorem
\let\c@example\c@theorem
\let\c@remark\c@theorem
\let\c@jremark\c@theorem
\let\c@jconjecture\c@theorem
\let\c@table\c@figure
\newcommand{\zlinxthing}{\mbox{z}_{\mbox{\protect\tiny L}}}
\newcommand{\zlinx}{\hyperlink{zlinxtarget}{\zlinxthing}}
\newcommand{\znaturalthing}{\mbox{z}_{\mbox{\protect\tiny $\mathcal{N}$}}}
\newcommand{\znatural}{\hyperlink{znaturaltarget}{\znaturalthing}}
\newcommand{\zgammathing}{\mbox{z}_{\mbox{\protect\tiny $\Gamma$}}}
\newcommand{\zgamma}{\hyperlink{zgammatarget}{\zgammathing}}
\newcommand{\znthing}{\mathfrak{z}_{\mbox{\protect\tiny $\mathcal{N}$}}}
\newcommand{\zn}{\hyperlink{zntarget}{\znthing}}
\newcommand{\zsthing}{\mathfrak{z}_{\mbox{\protect\tiny $\mathcal{S}$}}}
\newcommand{\zs}{\hyperlink{zstarget}{\zsthing}}
\newcommand{\zhthing}{\mathfrak{z}_{\mbox{\protect\tiny $\mathcal{H}$}}}
\newcommand{\zh}{\hyperlink{zhtarget}{\zhthing}}
\newcommand{\zgthing}{\mathfrak{z}_{\mbox{\protect\tiny $\Gamma$}}}
\newcommand{\zg}{\hyperlink{zgtarget}{\zgthing}}
\newcommand{\zcgthing}{\mathfrak{z}_{\mbox{\protect\tiny c-$\Gamma$}}}
\newcommand{\zcg}{\hyperlink{zcgtarget}{\zcgthing}}
\newcommand{\zfuthing}{\mathfrak{z}}
\newcommand{\zfu}{\hyperlink{zfutarget}{\zfuthing}}
\DeclareMathOperator{\Diag}{\textstyle{Diag}}
\DeclareMathOperator{\diag}{\textstyle{diag}}
\renewcommand*{\top}{%
  {\mathpalette\@transpose{}}%
}
\newcommand*{\@transpose}[2]{%
  \raisebox{\depth}{$\m@th#1\scriptscriptstyle\mathsf{T}$}%
}
\DeclareMathOperator{\rank}{rank}
\begin{document}

\title{Branch-and-bound for integer D-Optimality with fast local search and variable-bound tightening\footnote{A short preliminary version of some of this work appeared in  SOBRAPO 2022; see \cite{PonteFampaLeeSBPO22}}}
\titlerunning{B{\&}B for D-Opt with fast local search and bound tightening}  \author{Gabriel Ponte \and Marcia Fampa \and Jon Lee}
\authorrunning{Ponte, Fampa \& Lee}  

\institute{G. Ponte and M. Fampa\at
            Federal University of Rio de Janeiro \\
            \email{pontegabriel@cos.ufrj.br},
               \email{fampa@cos.ufrj.br} 
           \and
           G. Ponte and J. Lee  \at
            University of Michigan \\
              \email{gabponte@umich.edu}, \email{jonxlee@umich.edu}
}

\date{}


\maketitle
\begin{abstract}
We develop a branch-and-bound  algorithm
for the integer D-optimality problem, a central problem in statistical design theory,  
based on two convex relaxations,  employing
 variable-bound tightening and fast local-search procedures, testing our ideas on 
 various test problems. 
\keywords{D-optimality\and local search\and branch-and-bound\and variable-bound tightening\and convex relaxation}
\end{abstract}

\section{Introduction}

We consider the integer D-Optimality problem formulated as 
\vskip-10pt 

\begin{align*}\label{prob}\tag{D-Opt}
\textstyle
&\mbox{z}:=\max \left\{ \ldet\textstyle\sum_{\ell\in N} x_\ell v_\ell v_\ell^\top \, : \, \mathbf{e}^\top x=s,~  l\leq x \leq u,~ x\in\mathbb{Z}^n
\right\}\\[4pt]
&\qquad=\max \left\{ \ldet \left(
\textstyle\sum_{\ell\in N} l_\ell v_\ell v_\ell^\top 
+ \textstyle\sum_{\ell\in N} x_\ell v_\ell v_\ell^\top\right) \, : \right.\\[-1pt]
&\qquad\qquad\qquad\qquad\, \left.\mathbf{e}^\top x=s-\mathbf{e}^\top l\,,~  0\leq x \leq u-l,~ x\in\mathbb{Z}^n
\vphantom{\textstyle\sum_{\ell\in N} l_\ell v_\ell v_\ell^\top}\right\},
\end{align*}
where $v_\ell \in \mathbb{R}^{m}$, 
for $\ell\in N:=\{1,\ldots,n\}$,
with $n$ and $s\geq m$
natural numbers, and 
$0\leq l < u\in\mathbb{Z}^n$, with
$\mathbf{e}^\top l \leq s \leq \mathbf{e}^\top u$.
It will be very useful to define  $A:= (v_1, v_2, \dots, v_n)^\top$
(which we always assume has full column rank), and so we have  
$
\sum_{\ell\in N} x_\ell v_\ell v_\ell^\top = A^\top \Diag(x) A.
$
We note that the related \emph{continuous} D-optimality problem (where we have
$x\in \mathbb{R}_+^n$ with typically no upper bounds) is 
also considered in the literature (see \cite{Fedorov,Nikolov,Boyd1998,Sager2013}, for example).

\ref{prob} is a fundamental problem in statistics, in the area of ``experimental designs'' (see \cite{Puk}, for example). 
Ideally we would be considering the least-squares regression 
problem $\min_{\theta\in \mathbb{R}^{m}} \|A_u\theta -b_u\|_2$\,, where 
$A_u\in \mathbb{R}^{\mathbf{e}^\top u\times m}$ has $v_\ell^\top$  repeated $u_\ell$ times, and  
$b_u\in\mathbb{R}^{\mathbf{e}^\top u}$ is a corresponding response vector. 

But we consider a situation where each $v_\ell$
corresponds to a costly experiment, which should be carried out between $l_\ell$ and  $u_\ell$ times.
Overall, we have a budget to carry out a total of $s(\geq m)$ experiments, and so
we model the choices by   $x$ (in \ref{prob}). 
For a given feasible solution $\tilde{x}$, we define $A_{\tilde{x}}$
to be a matrix that has $v_\ell^\top$ repeated $\tilde x_\ell$ times as its rows, with
$b_{\tilde{x}}$ as the associated response vector.
This leads to the (reduced)  least-squares 
problem $\min_{\theta\in \mathbb{R}^m} \|A_{\tilde x}\theta - b_{\tilde{x}}\|_2$\,,
with solution $\hat\theta:=
(A_{\tilde x}^\top A_{\tilde x})^{-1}A_{\tilde x}^\top 
b_{\tilde x}$. 
The squared volume of a standard ellipsoidal confidence region (centered at $\hat\theta$) for the true $\theta$
is inversely proportional to $\det \sum_{\ell\in N}  \tilde{x}_\ell v_\ell v_\ell^\top$\,.
So \ref{prob} corresponds to picking the set of allowable experiments to 
minimize the volume of the confidence region for  $ \theta$.
The criterion of D-optimality was first suggested by 
A. Wald (see \cite{Wald}). The term ``D-optimality'' was coined 
by J. Kiefer (see \cite{Kiefer}). Also see \cite{KW,Draper,Fedorov}, for example. 

There is a large literature on heuristic 
algorithms for \ref{prob} and its variations.
\cite[Chap. 12]{Atkinson} contains a wealth of information on combinatorial local-search heuristics. 
See  \cite{SinghXie_ApproxDopt} and \cite{MadanSinghTantXie} for approximation algorithms with guarantees. 

\cite{Welch} was the first to approach \ref{prob} with an exact branch-and-bound (B\&B) algorithm,
employing a bound based on Hadamard's inequality and another based on continuous relaxation (apparently without using state-of-the-art NLP solvers of that time).  
\cite{KoLeeWayne,KoLeeWayne2} proposed a spectral bound
and analytically compared it with the Hadamard bound;
 also see \cite{LeeLind2019}.
 \cite{Ucinski2007,Sagnol2015,Ahipasaoglu2021} also consider the application of B\&B algorithms to \ref{prob}.
 \cite{Ucinski2007} addresses the case  where the variables are binary, i.e. $l=0$ and $u=\mathbf{e}$ and applies a  simplicial decomposition algorithm to solve the continuous relaxation. 
\cite{Sagnol2015} proposes to reformulate the continuous relaxation  as  a semidefinite program  or a second-order
 cone program. \cite{Ahipasaoglu2021} proposes a first-order method to solve a continuous relaxation of the problem where the variables are box constrained.
 
\cite{li2022d} applied a local-search procedure and  an exact B\&B algorithm to the Data-Fusion problem: a particular case of the  integer D-optimality problem where  the matrix $\sum_{\ell\in N}l_\ell v_\ell v_\ell^\top $\,, known as the ``existing Fisher Information Matrix (FIM)'', is assumed to be positive definite. Moreover, the Data-Fusion problem  considers only the case where the variables are binary, i.e., $l=0$ and $u=\mathbf{e}$.  Although the Data-Fusion problem and the integer D-optimality problem have similarities, most techniques  used in \cite{li2022d} rely on  the positive definiteness of the existing FIM and cannot be applied to our problem. Closely related to the Data-Fusion problem
is the ``D-optimal design augmentation'' problem (see \cite[Chap. 19]{Atkinson}), but the emphasis of the literature
on that problem is on heuristics, rather than on the relationship of upper bounds, which is the emphasis of our investigation. 

We note that although all of the 
prior work on B\&B algorithms for integer D-optimality, 
that is 
\cite{Welch,KoLeeWayne,KoLeeWayne2,Ucinski2007,Sagnol2015,Ahipasaoglu2021}, did not have access to  our new ``$\Gamma$-bound'',
and did not use variable tightening based on convex duality. With these tools, we
are able to significantly improve the state-of-the-art for B\&B algorithms for integer D-optimality. 

Finally, we 
would like to mention that a similar B\&B approach has been successfully applied to the related maximum-entropy sampling problem (MESP) (see \cite{AFLW_Using,Anstreicher_BQP_entropy,Kurt_linx,FL2022}), where given the covariance matrix $C$ of a  
Gaussian random $n$-vector, one searches for 
a subset of $s$ random variables that maximizes the ``information'' (measured by Shannon's ``differential entropy'')
(see \cite{SW,CaseltonZidek1984,LeeEnv,FL2022}, for example). Those algorithmic works inspired several aspects of the techniques 
we have developed for \ref{prob}. 

\vspace{0.1in} 
\noindent {\bf Organization.}
In \S2, we present the ``natural bound'' for \ref{prob}
and a procedure to fix variables based on convex duality. 
In \S3, we present a new bound for the binary version of \ref{prob}, the ``$\Gamma$-bound'', and we again describe 
how to fix variables. 
 We note that we can reformulate \ref{prob} as a binary
 version by repeating row $\ell$ of $A$ $u_\ell-l_\ell$ times. Additionally, we demonstrate that the $\Gamma$-bound for the binary version of \ref{prob} with a particular variable fixing is precisely the ``complementary 
 $\Gamma$-bound'' for the  so-called ``Data-Fusion problem''.
 Further, we
 present 
  theoretical results showing some relations between  different bounds for  the Data-Fusion problem, including bounds from the literature.
In \S4, we propose 
three local-search heuristics for \ref{prob}, considering  different neighborhoods of the current point to visit at each iteration; we present
 five algorithms to construct  initial solutions for the local-search procedures;
we describe how to   compute the determinant of a rank-one update of a given matrix, knowing the determinant of the matrix --- this procedure is essential for the successful application of the local-search procedures. 
In \S5, 
we present and analyze different strategies to compute search directions and step sizes for each iteration of the local-search procedures. 
In \S6, we present numerical experiments concerning a 
B\&B algorithm based on our results. 

\vspace{0.1in} 
\noindent {\bf Notation.}
 We let $\mathbb{S}^n$  (resp., $\mathbb{S}^n_+$~, $\mathbb{S}^n_{++}$)
 denote the set of symmetric (resp., positive-semidefinite, positive-definite) matrices of order $n$. 
 We let $\Diag(x)$ denote the $n\times n$ diagonal matrix with diagonal elements given by the components of $x\in \mathbb{R}^n$, and $\diag(X)$ denote the $n$-dimensional vector with elements given by the diagonal elements of $X\in\mathbb{R}^{n\times n}$.
  For $X\in\mathbb{S}^n$, we let $d_i(X)$ be the $i$-th largest element of $\diag(X)$, and  $\lambda_i(X)$ be the $i$-th largest eigenvalue of $X$.
We denote an all-ones  vector
by $\mathbf{e}$, an \hbox{$i$-th} standard unit vector by $\mathbf{e}_i$\,.
For compatible $M_1$ and $M_2$, 
$M_1\bullet M_2:=\Trace(M_1^\top M_2)$ is the matrix dot-product.
For matrix $M$, we denote row $i$ by $M_{i\cdot}$ and
column $j$ by $M_{\cdot j}$\,.

\section{The natural bound and its properties}\label{sec:natural_bound}

We consider the convex continuous relaxation of \ref{prob}, formulated as 
\begin{equation}\label{cont_rel}\tag{$\mathcal{N}_{\mbox{\protect\tiny D-Opt}}$}
\hypertarget{znaturaltarget}{\znaturalthing}:=\max \left\{ \ldet \left(A^\top \Diag(x) A\right) \, : \, \mathbf{e}^\top x=s, \, l\leq x\leq u, \, x\in \mathbb{R}^n\right\},
\end{equation}
where we refer to $\mbox{z}_{\mbox{\protect\tiny $\mathcal{N}$}}$ as the \emph{natural bound} for \ref{prob}.

Using similar techniques as \cite[Sec. 3.3.2-3]{FL2022}, 
we formulate the Lagrangian dual  of \ref{cont_rel}  
and use it for  tightening bounds on the variables (and for variable fixing if  strong enough), based on general principles of convex MINLP.

The Lagrangian dual  of \ref{cont_rel} is
\begin{equation}\label{eq:lag_with_theta}\tag{Du-$\mathcal{N}_{{\mbox{\protect\tiny D-Opt}}}$}
\begin{array}{lll}
&\min &-\ldet \Theta    - \omega^\top l + \nu^\top u + \tau s - {m},\\
&\text{s.t.} 
&\diag(A\Theta A^\top)  + \omega - \nu - \tau\mathbf{e} = 0,\\
&&\Theta \succ 0,\nu \geq 0, \omega \geq 0.
\end{array}
\end{equation}
We show in Thm.  \ref{thm:fix_dopt}, how to  tighten variable bounds  for \ref{prob} based on  knowledge of a lower bound  and a feasible solution for  \ref{eq:lag_with_theta}\,.

\begin{theorem}\label{thm:fix_dopt}
Let%
\vspace{-6pt}
\begin{itemize}
    \item LB be the objective-function value of a feasible solution for  \ref{prob};
    \item $(\hat\Theta,\hat\nu,\hat\omega,\hat\tau)$ be a feasible solution for  \ref{eq:lag_with_theta} with objective value $\hat\zeta$.
\end{itemize}
\vspace{-5pt}
Then, for every optimal solution $x^\star$ for \ref{prob}, we have:
\vspace{-5pt}
\begin{align}
     &x_k^\star \leq l_k + \left\lfloor
  \left(\hat{\zeta}-{LB}\right)/\hat\omega_k
     \right\rfloor ,\quad ~\forall\; k \in N\text{ such that } \hat \omega_k>0,\label{ineq1}\\ 
     &x_k^\star \geq u_k-  \left\lfloor
    \left(\hat{\zeta}-{LB}\right)/\hat\nu_k
     \right\rfloor,\quad ~\forall\; k \in N\text{ such that } \hat \nu_k>0.\label{ineq2}
     \end{align}
 \end{theorem}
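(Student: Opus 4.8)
The plan is to marry weak Lagrangian duality to the integrality of an optimal $x^\star$ and the sign restrictions $\hat\omega,\hat\nu\ge 0$. The workhorse is the variational representation of the log-determinant: for every $X\succ 0$,
\[
\ldet X \;=\; \min_{\Theta\succ 0}\left\{\Theta\bullet X - \ldet\Theta - m\right\},
\]
with minimizer $\Theta=X^{-1}$. Dropping the minimization and specializing to $X=A^\top\Diag(x)A$ (so that $\Theta\bullet(A^\top\Diag(x)A)=\diag(A\Theta A^\top)^\top x$) yields, for any $\Theta\succ 0$ and any $x$ feasible for \ref{prob}, the linear majorant
\[
\ldet\!\left(A^\top\Diag(x)A\right)\;\le\;\diag(A\Theta A^\top)^\top x - \ldet\Theta - m.
\]
I regard establishing this inequality, with the correct orientation, as the conceptual heart of the argument; everything afterward is bookkeeping.

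Next I would set $\Theta=\hat\Theta$ and $x=x^\star$, an optimal solution of \ref{prob}. Using dual feasibility in the form $\diag(A\hat\Theta A^\top)=\hat\nu+\hat\tau\mathbf{e}-\hat\omega$ together with the equality $\mathbf{e}^\top x^\star=s$, the right-hand side rewrites as $\hat\nu^\top x^\star - \hat\omega^\top x^\star + \hat\tau s - \ldet\hat\Theta - m$. Matching this against the dual objective $\hat\zeta=-\ldet\hat\Theta-\hat\omega^\top l+\hat\nu^\top u+\hat\tau s-m$ collapses the constant terms and leaves the clean master inequality
\[
\ldet\!\left(A^\top\Diag(x^\star)A\right)\;\le\;\hat\zeta-\hat\nu^\top(u-x^\star)-\hat\omega^\top(x^\star-l).
\]

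Finally I would invoke $\mathrm{LB}\le z=\ldet(A^\top\Diag(x^\star)A)$ to obtain $\hat\nu^\top(u-x^\star)+\hat\omega^\top(x^\star-l)\le\hat\zeta-\mathrm{LB}$. Since $\hat\nu,\hat\omega\ge0$ and $u-x^\star,\,x^\star-l\ge0$ by feasibility, every summand is nonnegative, so each single term is already bounded by $\hat\zeta-\mathrm{LB}$; isolating coordinate $k$ gives $\hat\omega_k(x_k^\star-l_k)\le\hat\zeta-\mathrm{LB}$ and $\hat\nu_k(u_k-x_k^\star)\le\hat\zeta-\mathrm{LB}$. For $k$ with $\hat\omega_k>0$ (resp.\ $\hat\nu_k>0$) I would divide through and then use that $x_k^\star-l_k$ (resp.\ $u_k-x_k^\star$) is a nonnegative \emph{integer} not exceeding the quotient, hence bounded by its floor; this produces exactly \eqref{ineq1} and \eqref{ineq2}.

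The only points demanding care are keeping the determinant inequality pointed the right way and tracking signs when folding $\hat\nu^\top u$ and $\hat\omega^\top l$ into $\hat\zeta$. One should also note that the majorant is applied where it is valid: $A^\top\Diag(x^\star)A\succ 0$, which holds at any finite optimum because $A$ has full column rank and $x^\star$ is feasible with $\mathbf{e}^\top x^\star=s\ge m$. These are minor; I do not anticipate a genuine obstacle beyond setting up the Fenchel bound correctly.
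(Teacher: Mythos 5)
Your proof is correct and is essentially the argument the paper has in mind: the paper states Thm.~\ref{thm:fix_dopt} without an explicit proof, deferring to the techniques of \cite[Sec.~3.3.2-3]{FL2022} and ``general principles of convex MINLP,'' and those principles are precisely your chain of weak duality, the master inequality, term-by-term nonnegativity, and integrality/floors. Your Fenchel-type majorant $\ldet X \le \Theta\bullet X - \ldet\Theta - m$ is simply the closed-form evaluation of the supremum over the matrix variable in the Lagrangian dual function (attained at $\Theta = X^{-1}$), so it reproduces, rather than replaces, the duality bound $\hat\zeta \ge \ldet\left(A^\top\Diag(x)A\right) + \hat\nu^\top(u-x) + \hat\omega^\top(x-l)$ for primal-feasible $x$ that underlies the paper's variable-fixing scheme.
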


Next, we show how to construct a closed-form feasible solution of \ref{eq:lag_with_theta} from a feasible solution $\hat x$ of \ref{cont_rel} such that $A^\top\Diag(\hat x)A\!\in \!\mathbb{S}^m_{++}$\,,
 with the goal of having a small duality gap. 
The constructed solution is useful in applying Thm. \ref{thm:fix_dopt}, especially when we  solve \ref{cont_rel} inexactly.
 We define $\hat\Theta:= (A^\top \Diag(\hat x) A)^{-1}$. The minimum   gap between $\hat x$ in \ref{cont_rel} and feasible solutions $(\hat\Theta,\hat\omega,\hat\nu,\hat\tau)$ of  \ref{eq:lag_with_theta}  is the optimal value of the linear program
\begin{equation}\label{eq:g_theta}\tag{$G(\hat\Theta)$}
\begin{array}{rrl}
&\min & - \omega^\top l + \nu^\top u + \tau s - m,\\
&\text{s.t.} 
& \omega - \nu - \tau\mathbf{e} = -\diag(A\hat\Theta A^\top)  ,\\
&&\nu \geq 0, \omega \geq 0.
\end{array}
\end{equation}
To obtain an optimal solution of  \ref{eq:g_theta},   we consider its  optimality conditions 
\begin{equation}\label{kktnatural}
\begin{array}{l}
    \diag(A\hat \Theta A^\top)  + \omega - \nu - \tau\mathbf{e} = 0,~\nu\geq 0,~\omega\geq 0,\\
    \mathbf{e}^\top x = s,~l\leq x \leq u,\\
    - \omega^\top l + \nu^\top u + \tau s = \diag(A\hat \Theta A^\top)^\top x.
\end{array}
\end{equation}
We consider the permutation $\sigma$ of the indices in $N$, such that $\diag(A\hat\Theta A^\top)_{\sigma(1)} \geq \dots \geq \diag(A\hat\Theta A^\top)_{\sigma(n)}$. 
If $u_{\sigma(1)}+\sum_{i=2}^n l_{\sigma(i)}>s$, we let $\varphi:=0$, otherwise we let $\varphi:=\max\{j\in N: \sum_{i=1}^j u_{\sigma(i)} +  \sum_{i=j+1}^n  l_{\sigma(i)}\leq s\}$. 
We define $P := \{\sigma(1),\dots,\sigma(\varphi)\}$ and $Q :=\{\sigma(\varphi + 2),\dots,\sigma(n)\}$.
Then, we  can verify that the following solution satisfies \eqref{kktnatural}.

\begin{align*}
    &\tilde{x}_\ell:= \begin{cases}
        u_\ell\,,\quad&\text{for }\ell \in P;\\
        l_\ell\,,&\text{for }\ell \in Q;\\
        s-\sum_{\ell\in P}u_\ell -\sum_{\ell\in Q}l_\ell\,,&\text{for }\ell = \sigma(\varphi +1), ~ \text{if  }\varphi<n,
    \end{cases}\\ 
    &\tilde{\tau} :=  \begin{cases}
    \diag(A\hat\Theta A^\top)_{\sigma(\varphi+1)}\,,\quad &\text{if  }\varphi<n;\\
    0,&\text{otherwise},
    \end{cases}\\
    &\tilde{\nu}_\ell := \begin{cases}
         \diag(A\hat\Theta A^\top)_{\ell} - \tilde{\tau},\quad &\text{for }\ell \in P;\\
         0,&\text{otherwise},
    \end{cases}\\
    &\tilde{\omega}_\ell := \begin{cases}
         \tilde{\tau}-\diag(A\hat\Theta A^\top)_{\ell}\,,\quad &\text{for } \ell \in Q;\\
         0,&\text{otherwise}.
    \end{cases}
\end{align*}

\section{The \texorpdfstring{$\Gamma$}{Gamma}-bound and its properties}

Next, we introduce a new convex continuous relaxation of \ref{prob}, based on the so-called $\Gamma$-function, using similar techniques to those used in  \cite{Nikolov,Weijun}.

\subsection{The $\Gamma$-function} 

\begin{lemma}\label{Ni13}{\cite[Lem. 13]{Nikolov}}
 Let $\lambda\in\mathbb{R}_+^n$ with $\lambda_1\geq \lambda_2\geq \cdots\geq \lambda_n$\,, and let $t$ be an integer with  $0<t\leq n$. There exists a unique integer $\iota$, with $0\leq \iota< t$, such that
 \begin{equation}\label{reslemiota_a}
 \lambda_{\iota}>\textstyle\frac{1}{t-\iota}\textstyle\sum_{\ell=\iota+1}^n \lambda_{\ell}\geq \lambda_{\iota +1}~,
 \end{equation}
 with the convention $\lambda_0=+\infty$. \qed
\end{lemma}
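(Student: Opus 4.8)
The plan is to recast the two inequalities through a single tail-average quantity and then reduce everything to an elementary segment argument. For each integer $\iota$ with $0\le\iota<t$ define
\[
g(\iota):=\frac{1}{t-\iota}\sum_{\ell=\iota+1}^{n}\lambda_\ell,
\]
and let $\iota_{\max}$ denote the largest integer strictly below $t$. Then \eqref{reslemiota_a} asks exactly for an $\iota$ that simultaneously satisfies \textbf{(a)} $\lambda_\iota>g(\iota)$, which holds automatically at $\iota=0$ by the convention $\lambda_0=+\infty$, and \textbf{(b)} $g(\iota)\ge\lambda_{\iota+1}$.

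First I would establish the crucial equivalence that, for every $1\le\iota\le\iota_{\max}$,
\[
\lambda_\iota>g(\iota)\quad\Longleftrightarrow\quad \lambda_\iota>g(\iota-1).
\]
This comes from multiplying $\lambda_\iota>g(\iota)$ by $t-\iota>0$, adding $\lambda_\iota$ so that the tail sum becomes $\sum_{\ell=\iota}^{n}\lambda_\ell=(t-\iota+1)\,g(\iota-1)$, and dividing by $t-\iota+1>0$; each step reverses. Reading it negated gives the dictionary I will actually use: the failure of \textbf{(a)} at $\iota$ is precisely \textbf{(b)} at $\iota-1$. Combining the equivalence with $\lambda_{\iota-1}\ge\lambda_\iota$ shows \textbf{(a)} is downward hereditary, so the indices where \textbf{(a)} holds form an initial block $\{0,1,\dots,a\}$ with $a$ well defined. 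The dictionary then determines \textbf{(b)} on all interior indices: \textbf{(b)} holds at $j$ iff \textbf{(a)} fails at $j+1$, that is, iff $j\ge a$.

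It remains to check the top index $\iota_{\max}$, which the dictionary cannot reach, and this is the one genuinely case-split step. If $t$ is an integer then $g(t-1)=\sum_{\ell=t}^{n}\lambda_\ell\ge\lambda_t$, while if $t$ is not an integer then $t-\lfloor t\rfloor<1$ forces $g(\lfloor t\rfloor)>\lambda_{\lfloor t\rfloor+1}$; either way \textbf{(b)} holds at $\iota_{\max}$. Hence \textbf{(a)} holds exactly on $\{0,\dots,a\}$ and \textbf{(b)} exactly on $\{a,\dots,\iota_{\max}\}$, so the two meet in the single index $\iota=a$, yielding both existence and uniqueness. I expect the main obstacle to be purely bookkeeping rather than analytic: getting the first-step equivalence exactly right, and remembering that the correspondence between the failure of \textbf{(a)} at $\iota$ and \textbf{(b)} at $\iota-1$ is available only for interior $\iota$, so that the endpoint $\iota_{\max}$ genuinely has to be verified separately. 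Once that is in place, the conclusion that an initial block and a final block overlap in exactly one point is immediate.
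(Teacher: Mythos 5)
Your proposal is correct, and it fills a gap rather than duplicating anything: the paper states this lemma with the tombstone in the statement and defers entirely to the citation \cite[Lem.~13]{Nikolov}, so there is no internal proof to compare against, and your write-up supplies a self-contained elementary argument. The engine of your proof --- the reversible chain giving $\lambda_\iota>g(\iota)\Leftrightarrow\lambda_\iota>g(\iota-1)$ for interior $\iota$, whose negation says that failure of the strict inequality at $\iota$ is exactly the weak inequality at $\iota-1$ --- is the right bookkeeping device: combined with $\lambda_{\iota-1}\geq\lambda_\iota$ it makes the strict-inequality indices an initial block $\{0,\dots,a\}$ and the weak-inequality indices the final block $\{a,\dots,\iota_{\max}\}$, so existence and uniqueness drop out together from the one-point overlap, and the separate endpoint check at $\iota_{\max}$ is handled correctly. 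Two minor remarks. First, in the non-integer-$t$ branch you assert the strict inequality $g(\lfloor t\rfloor)>\lambda_{\lfloor t\rfloor+1}$; this fails in the degenerate case $\sum_{\ell=\lfloor t\rfloor+1}^{n}\lambda_\ell=0$, where both sides vanish, but condition (b) only requires $\geq$, so the conclusion you need is unaffected. Second, in this paper's applications $t$ is always an integer (it is invoked with $t=n-s$ and $t=r$), so the non-integer branch is superfluous here, though retaining it matches the generality of Nikolov's original statement.
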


Suppose that  $\lambda\in\mathbb{R}^n_+$~, and assume that 
$\lambda_1\geq\lambda_2\geq\cdots\geq\lambda_n$~. Given  integer $t$ with $0<t\leq n$,
let $\iota$ be the unique integer defined by Lem. \ref{Ni13}. We define
\begin{equation*}
\phi_t(\lambda):=\textstyle\sum_{\ell=1}^{\iota} \log\left(\lambda_\ell\right) + (t - \iota)\log\left(\frac{1}{t-{\iota}} \sum_{\ell=\iota+1}^{n}
\lambda_\ell\right),
\end{equation*}
and, for $X\in\mathbb{S}_{+}^n$~, we define the \emph{$\Gamma$-function}
\begin{equation}\label{def:gamma}
\Gamma_t(X):= \phi_t(\lambda(X)).
\end{equation}

\begin{proposition}\label{prop:gammaconc}{ \cite[Sec. 4.1]{Nikolov}}
    The $\Gamma$-function defined in \eqref{def:gamma} is  concave  on $\mathbb{S}_{+}^n$\,. 
\end{proposition}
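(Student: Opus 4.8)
The plan is to represent $\phi_t$ as a partial maximum of a jointly concave function over a fixed convex polytope, and then to lift concavity from eigenvalue vectors to matrices via the standard theory of spectral functions. Concretely, first I would establish the variational representation
\begin{equation*}
\phi_t(\lambda) = \max\left\{\textstyle\sum_{\ell=1}^n w_\ell\log\left(\lambda_\ell/w_\ell\right) \;:\; w\in W\right\},\quad W:=\left\{w\in\mathbb{R}^n : \mathbf{0}\le w\le \mathbf{e},~ \mathbf{e}^\top w = t\right\},
\end{equation*}
with the conventions $0\log(\lambda_\ell/0):=0$ and $w_\ell\log(0/w_\ell):=-\infty$ for $w_\ell>0$. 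For $\lambda\in\mathbb{R}^n_{++}$ the objective is strictly concave in $w$, so the maximum over the polytope $W$ is attained at the unique KKT point. Introducing a multiplier $\gamma$ for $\mathbf{e}^\top w=t$ and setting $C:=e^{1+\gamma}$, stationarity forces $w_\ell=1$ when $\lambda_\ell\ge C$ and $w_\ell=\lambda_\ell/C$ when $\lambda_\ell<C$. Writing $\iota:=|\{\ell:\lambda_\ell\ge C\}|$ and imposing $\mathbf{e}^\top w=t$ then yields $C=\tfrac{1}{t-\iota}\sum_{\ell>\iota}\lambda_\ell$, which is exactly the tail-average threshold and the breakpoint $\iota$ singled out in Lem.~\ref{Ni13}; substituting back reproduces $\phi_t(\lambda)$ as in \eqref{def:gamma}. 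Degenerate cases (some $\lambda_\ell=0$, i.e.\ rank deficiency) are handled by the $-\infty$ convention, under which both sides agree.

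Second, I would observe that each summand $(w_\ell,\lambda_\ell)\mapsto w_\ell\log(\lambda_\ell/w_\ell)$ is the perspective of the concave map $\lambda_\ell\mapsto\log\lambda_\ell$, hence jointly concave on $\mathbb{R}_+\times\mathbb{R}_+$ (equivalently, it is the negative of the jointly convex relative entropy). Consequently $F(w,\lambda):=\sum_\ell w_\ell\log(\lambda_\ell/w_\ell)$ is jointly concave on $\mathbb{R}_+^n\times\mathbb{R}_+^n$. Since $W$ is convex and does not depend on $\lambda$, the partial maximum $\lambda\mapsto\max_{w\in W}F(w,\lambda)=\phi_t(\lambda)$ is concave on $\mathbb{R}_+^n$, by the standard fact that maximizing a jointly concave function over one block of variables, ranging in a fixed convex set, preserves concavity in the remaining variables. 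The same representation makes transparent that $\phi_t$ is \emph{symmetric} in $\lambda$, since both $W$ and $F$ are invariant under simultaneously permuting the coordinates of $w$ and $\lambda$.

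Finally, concavity lifts from the eigenvalue vector to the matrix: because $\phi_t$ is a symmetric, concave function on $\mathbb{R}_+^n$ and $\Gamma_t(X)=\phi_t(\lambda(X))$ composes it with the ordered-eigenvalue map, the Davis--Lewis characterization of concave spectral functions (a symmetric function composed with the eigenvalue map is concave on symmetric matrices iff the symmetric function is concave) gives that $\Gamma_t$ is concave on $\mathbb{S}_+^n$. The main obstacle I anticipate lies in the first step: verifying rigorously that the KKT optimizer of the variational problem reproduces precisely the index $\iota$ and threshold $\tfrac{1}{t-\iota}\sum_{\ell>\iota}\lambda_\ell$ of Lem.~\ref{Ni13}, together with the boundary and rank-deficient cases. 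Once this representation is secured, joint concavity of the perspective terms and the spectral-lifting theorem render the remainder routine.
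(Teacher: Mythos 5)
Your proof is correct, but it follows a genuinely different route from the one the paper relies on: the paper gives no argument at all for Prop.~\ref{prop:gammaconc}, deferring entirely to \cite[Sec.~4.1]{Nikolov}, and the approach behind that citation --- which is also the one woven into this paper's own machinery in Sec.~\ref{sec:dualgamma} --- obtains concavity from a \emph{minimization} (conjugate-type) representation of $\Gamma_t$\,: roughly, $\Gamma_t(X)$ is an infimum over dual matrices $\Theta \succ 0$ of affine functions of $X$ (a trace pairing with $\Theta$ minus the sum of logs of the $t$ smallest eigenvalues of $\Theta$), as reflected in the dual \ref{eq:dual_gamma_dopt_01} and in the construction \eqref{betaepsilon}; an infimum of affine functions is concave at the matrix level with no spectral-lifting theorem needed, and the minimizing $\Theta$ doubles as the dual certificate the paper uses for variable fixing. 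You go the opposite way: a \emph{maximization} representation $\phi_t(\lambda)=\max_{w\in W}\sum_\ell w_\ell \log(\lambda_\ell/w_\ell)$, joint concavity of the perspective (negative relative entropy) terms, preservation of concavity under partial maximization over the fixed polytope $W$, and then Davis--Lewis to lift from eigenvalue vectors to $\mathbb{S}_+^n$\,. I checked your key step: the KKT analysis yields the water-filling optimizer $w_\ell=\min\left\{1,\lambda_\ell/C\right\}$ with $C$ equal to the tail average, the counting condition $\iota+\frac{1}{C}\sum_{\ell>\iota}\lambda_\ell=t$ reproduces exactly the index $\iota$ of Lem.~\ref{Ni13} (ties $\lambda_\iota=C$ are harmless, as either choice of $\iota$ gives the same value), substitution returns $\phi_t(\lambda)$, and the rank-deficient cases agree with Thm.~\ref{cor:propphi} under the $-\infty$ convention. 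The trade-off between the two routes: yours is self-contained modulo standard convex-analysis facts and makes both the symmetry of $\phi_t$ and the combinatorial origin of the breakpoint $\iota$ transparent, but it imports the nontrivial Davis--Lewis theorem on spectral functions; the min-representation route avoids any eigenvalue-to-matrix lifting and, as a by-product, supplies precisely the dual feasible solutions that the paper's bound-tightening procedure needs, which is why the paper is organized around it.
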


\begin{lemma}\label{lem:iota_t}
 Let $\lambda\in\mathbb{R}_+^n$ with $\lambda_1\geq \lambda_2\geq \cdots\geq
 \lambda_\delta> \lambda_{\delta+1}=\cdots=
 \lambda_r>\lambda_{r+1}=\cdots
 =\lambda_n=0$.
 Then,%
 \vspace{-5pt}
 \begin{enumerate}
     \item[(a)] For $t=r$, the $\iota$ satisfying
 \eqref{reslemiota_a} is
 precisely $\delta$. 
 \item[(b)] For $r<t\leq n$, the $\iota$ satisfying
 \eqref{reslemiota_a} is
 precisely $r$. 
 \end{enumerate}
\end{lemma}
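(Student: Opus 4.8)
The plan is to exploit the uniqueness asserted by Lem.~\ref{Ni13}: for each admissible $t$ there is exactly one integer $\iota$ with $0\le\iota<t$ satisfying \eqref{reslemiota_a}. So rather than derive $\iota$ from scratch, I would propose the claimed value, check that it lies in the admissible range $0\le\iota<t$, and verify the two inequalities in \eqref{reslemiota_a}; uniqueness then guarantees it is \emph{the} $\iota$. The only quantity that needs computing is the truncated tail sum $\sum_{\ell=\iota+1}^n\lambda_\ell$, which collapses because of the three-block structure of $\lambda$. Throughout, write $c:=\lambda_{\delta+1}=\cdots=\lambda_r>0$ for the common value on the middle block, and note that the stated chain of (in)equalities presupposes $\delta+1\le r$, so that middle block is nonempty and $\delta<r$, while $\lambda_r>\lambda_{r+1}=0$ forces $r\ge 1$.

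For part (a), take $t=r$ and test $\iota=\delta$. Admissibility $0\le\delta<r=t$ holds by the remarks above, and $r=t$ is a legitimate value of $t$ since $1\le r\le n$. Splitting the tail sum into the middle block and the vanishing tail gives $\sum_{\ell=\delta+1}^n\lambda_\ell=(r-\delta)c+0$, so the average is $\frac{1}{t-\delta}\sum_{\ell=\delta+1}^{n}\lambda_\ell=\frac{(r-\delta)c}{r-\delta}=c$. The right inequality of \eqref{reslemiota_a} then reads $c\ge\lambda_{\delta+1}=c$, which holds with equality, and the left inequality reads $\lambda_\delta>c=\lambda_{\delta+1}$, which is exactly the strict drop assumed at index $\delta$ (the convention $\lambda_0=+\infty$ covering the case $\delta=0$). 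Hence $\iota=\delta$.

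For part (b), take any $t$ with $r<t\le n$ and test $\iota=r$; admissibility $0\le r<t$ is immediate. Now the tail sum runs only over the zero block, $\sum_{\ell=r+1}^n\lambda_\ell=0$, so the average is $\frac{1}{t-r}\cdot 0=0$. The right inequality becomes $0\ge\lambda_{r+1}=0$, again with equality, and the left inequality becomes $\lambda_r>0$, which holds because $\lambda_r>\lambda_{r+1}=0$; here $r+1\le n$ is guaranteed by $r<t\le n$, so $\lambda_{r+1}$ is well defined. Hence $\iota=r$.

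I do not expect a genuine obstacle: each part reduces to evaluating the tail sum and reading off two (in)equalities. The only points requiring care are the degenerate indices — confirming $r\ge1$ and $\delta<r$ so the denominators $t-\iota$ are positive, invoking the $\lambda_0=+\infty$ convention when $\delta=0$, and observing that the vanishing tail in part (a) may be empty (when $r=n$) without affecting the computation.
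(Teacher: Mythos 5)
Your proposal is correct and follows essentially the same route as the paper: propose the claimed $\iota$, collapse the tail sum using the block structure of $\lambda$ (giving average $\lambda_{\delta+1}$ in part (a) and $0=\lambda_{r+1}$ in part (b)), verify both inequalities of \eqref{reslemiota_a}, and let the uniqueness in Lem.~\ref{Ni13} finish the job. The paper's proof is just a terser version of this; your extra attention to the degenerate cases ($\delta=0$ via the $\lambda_0=+\infty$ convention, positivity of the denominators $t-\iota$) is sound but not a different argument.
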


\begin{proof}
For (a), the result follows  because
\[
\textstyle\frac{1}{r-\delta}\sum_{\ell=\delta+1}^n \lambda_{\ell} = \lambda_{\delta+1} \quad\mbox{and}\qquad \lambda_\delta> \lambda_{\delta+1}~.
\]
For (b), the result follows because 
\[
\textstyle\frac{1}{t-r}\sum_{\ell=r+1}^n \lambda_{\ell} = 0 =\lambda_{r+1} \quad\mbox{and}\qquad \lambda_r> \lambda_{r+1}~. \qed
\]
\end{proof}

\begin{theorem}\label{cor:propphi}
Let $\lambda\in\mathbb{R}_+^n$ with $\lambda_1\geq \lambda_2\geq \cdots\geq
 \lambda_r>\lambda_{r+1}=\cdots
 =\lambda_n=0$. It follows from Lem. \ref{lem:iota_t} that  
 \[
 \begin{array}{ll}
 \phi_t(\lambda)=\sum_{\ell=1}^{t} \log\left(\lambda_\ell\right),& \mbox{  for $t= r$,}\\[2pt]
 \phi_t(\lambda)=-\infty, & \mbox{  for $r< t\leq n$,}
 \end{array}
 \]
 where we use $\log(0)=-\infty$. Finally, we note that, in general, 
 \[
 \textstyle\phi_{t}(\lambda)\neq \sum_{\ell=1}^{t} \log\left(\lambda_\ell\right), \quad \mbox{  for $0<t<r$.}
 \]  
\end{theorem}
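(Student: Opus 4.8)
The plan is to prove the three assertions of Thm.~\ref{cor:propphi} by directly substituting the eigenvalue-multiplicity structure into the definition of $\phi_t$ and invoking Lem.~\ref{lem:iota_t} to pin down the value of $\iota$ in each regime. Recall that $\phi_t(\lambda)=\sum_{\ell=1}^{\iota}\log(\lambda_\ell)+(t-\iota)\log\!\left(\frac{1}{t-\iota}\sum_{\ell=\iota+1}^n\lambda_\ell\right)$, so everything hinges on what $\iota$ is. For the case $t=r$, the hypothesis puts us in a special instance of Lem.~\ref{lem:iota_t}(a): taking the $\delta$ of that lemma to be $r-1$ (since $\lambda_{r-1}>\lambda_r$ when the positive block has a strict drop at position $r$) is not quite right in general, so instead I would apply Lem.~\ref{lem:iota_t}(a) in the form where the last \emph{strictly positive} value sits at index $r$; the lemma then yields $\iota=r$ when $t=r$ only if the appropriate strict inequality holds. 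More carefully: with $t=r$, I invoke Lem.~\ref{lem:iota_t} to get $\iota=r-1$ when the $\delta$ there equals $r-1$, but the cleanest route is to check directly that $\iota=r-1$ makes the averaging term collapse. Let me instead argue it head-on.

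First I would treat $t=r$. Here $\sum_{\ell=\iota+1}^n\lambda_\ell=\sum_{\ell=\iota+1}^r\lambda_\ell$ since the tail eigenvalues vanish. I claim $\iota=r-1$ is forced only if $\lambda_{r-1}>\lambda_r$; in the fully general statement of the theorem the positive eigenvalues need not be distinct, so the correct value of $\iota$ is whatever Lem.~\ref{lem:iota_t}(a) delivers. Applying part~(a) of that lemma with its ``$r$'' equal to our $r$ and its ``$\delta$'' equal to the index of the last eigenvalue strictly exceeding $\lambda_r$, I obtain that the averaging block $[\iota+1,n]$ consists precisely of the eigenvalues equal to $\lambda_r$ together with the zeros, and the average over the nonzero part equals $\lambda_{\iota+1}=\cdots=\lambda_r$. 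Substituting back, the term $(t-\iota)\log\!\left(\frac{1}{t-\iota}\sum_{\ell=\iota+1}^n\lambda_\ell\right)$ becomes $(r-\iota)\log(\lambda_r)=\sum_{\ell=\iota+1}^r\log(\lambda_\ell)$ because all those eigenvalues coincide, and combining with the leading sum $\sum_{\ell=1}^{\iota}\log(\lambda_\ell)$ telescopes to $\sum_{\ell=1}^r\log(\lambda_\ell)$, which is exactly the claimed formula for $t=r$.

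Next I would handle $r<t\le n$. By Lem.~\ref{lem:iota_t}(b) the unique $\iota$ equals $r$, so the averaging block is $[r+1,n]$, on which every $\lambda_\ell=0$; hence $\frac{1}{t-r}\sum_{\ell=r+1}^n\lambda_\ell=0$ and $\log(0)=-\infty$ with the convention stated. Since the leading sum $\sum_{\ell=1}^r\log(\lambda_\ell)$ is finite (all those $\lambda_\ell>0$) and the coefficient $t-\iota=t-r>0$, the total is $-\infty$, establishing the second line. Finally, for $0<t<r$ I would merely exhibit a counterexample witnessing $\phi_t(\lambda)\ne\sum_{\ell=1}^t\log(\lambda_\ell)$: take any $\lambda$ with, say, $t=1$, $n=2$, $\lambda_1=\lambda_2=1$, so that $\phi_1(\lambda)=\log\!\left(\frac{1}{1}(\lambda_1+\lambda_2)\right)=\log 2\ne 0=\log\lambda_1$. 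I expect the only genuinely delicate point to be the $t=r$ case when the positive eigenvalues are not all distinct: one must be careful that Lem.~\ref{lem:iota_t}(a) is applied with $\delta$ equal to the true position of the last strict descent, and then verify that the equal-eigenvalue block lets the averaged logarithmic term split back into the separate $\log(\lambda_\ell)$ summands; the remaining two cases are immediate from Lem.~\ref{lem:iota_t}(b) and a one-line example.
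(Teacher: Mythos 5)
Your proof is correct and takes essentially the paper's intended route: invoke Lem.~\ref{lem:iota_t}(a) to get $\iota=\delta$ when $t=r$, so that the constant block $\lambda_{\delta+1}=\cdots=\lambda_r$ collapses the averaged logarithm $(r-\delta)\log\bigl(\tfrac{1}{r-\delta}\sum_{\ell=\delta+1}^n\lambda_\ell\bigr)$ into $\sum_{\ell=\delta+1}^{r}\log(\lambda_\ell)$, and invoke Lem.~\ref{lem:iota_t}(b) to get $\iota=r$ for $r<t\leq n$, making the averaging term $\log(0)=-\infty$. Your explicit counterexample for $0<t<r$ (namely $n=2$, $t=1$, $\lambda_1=\lambda_2=1$, giving $\phi_1(\lambda)=\log 2\neq 0=\log\lambda_1$) is valid and even slightly more concrete than the paper, which merely asserts the non-equality; the early hesitation about whether $\iota=r-1$ is harmless since your final argument correctly uses $\iota=\delta$.
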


\subsection{The $\Gamma$-bound for  the 0/1  D-Optimality problem}

 In this section,  we  consider the 0/1  D-Optimality problem
\begin{align*}\label{prob01}\tag{D-Opt(0/1)}
&\max \left\{\ldet(A^\top \Diag(x) A) \, : \, \mathbf{e}^\top x=s,~   x\in\{0,1\}^n\right\},
\end{align*} 
where $A:= (v_1, v_2, \dots,v_n)^\top\in\mathbb{R}^{n\times m}$ has full column rank, and $m\leq s\leq n$. 

\begin{remark}\label{rem:gammafordopt}
    We can reformulate \ref{prob} as \ref{prob01} by 
    replacing $A$ in \ref{prob01} with $A_{u-l}$\,, that is,
    a matrix with  rows comprising $u_\ell-l_\ell$ repetitions of $v_\ell^\top$\,, for $\ell \in  N$. But of course this may not be computationally effective when $u-l$ has large components. 
\end{remark}

Next, we present the $\Gamma$-bound for \ref{prob01}.
 Let $A = U\Sigma V^\top$  be the real singular-value decomposition  of $A$. Consider a factorization $I_n-UU^{\top} = WW^\top$,
with $W\in \mathbb{R}^{n\times (n-m)}$. 

The \emph{$\Gamma$-bound} for \ref{prob01} is defined as

\begin{equation}\label{gamma_bound} \tag{$\Gamma_{{\mbox{\protect\tiny D-Opt(0/1)}}}$}
\begin{array}{lllll}
\hypertarget{zgammatarget}{\zgammathing}:=&2\textstyle\sum_{i= 1}^m \log(\Sigma_{ii}) + &\max  &\Gamma_{n-s}\left(W^\top\Diag(y)W
\right),\\[2pt]
&&\text{s.t.} 
& \mathbf{e}^\top y = n-s,\\
&&&0\leq y\leq \mathbf{e}.
\end{array}
\end{equation}

We note that $I_n - UU^\top \! \in \mathbb{S}^n_{+}$ 
and has rank $n-m$, so an appropriate $W$ can be  obtained from the spectral decomposition of $I_n-UU^\top$; that is,
$I_n-UU^\top=\sum_{i=1}^{n-m} \tilde\lambda_i \tilde{\nu}_i \tilde{\nu}_i^\top$~,
and $w_i:=\sqrt{\tilde{\lambda}_i}\tilde{\nu}_i$ is the $i$-th column of $W$, $i=1,\ldots,n-m$. Similarly to \cite[Sec. 3.4.5]{FL2022}, the $\Gamma$-bound does not depend on the factorization.

In Thm.  \ref{thm:exactrel} we will establish that \ref{gamma_bound} is an \emph{exact relaxation} of \ref{prob01},
in the sense that for any given   feasible solution $\hat x$ to \ref{prob01} with objective value $\hat z$, there is a 
feasible solution $\hat y$ to   \ref{gamma_bound} given by $\hat y:=\mathbf{e}-\hat x$, with  the same objective value $\hat{z}$. Furthermore, the $\Gamma$-function is concave in $\mathbb{S}_+^n$ (see Prop. \ref{prop:gammaconc}). Then, we conclude that \ref{gamma_bound} is a convex exact relaxation of \ref{prob01}.

\begin{theorem}\label{thm:exactrel}
   \ref{gamma_bound} is an exact convex relaxation of \ref{prob01}.
\end{theorem}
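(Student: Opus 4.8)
The goal is to show that the $\Gamma$-bound equals $z$ exactly, where $z$ is the optimal value of \ref{prob01}. Since it is called a "bound," I expect the proof has two halves: first that $\zgamma \geq z$ (it is a valid relaxation), and second that $\zgamma \leq z$ (it is exact, i.e., achieved by an integral solution). Let me think about the right way to see both.

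For any feasible $x \in \{0,1\}^n$ with $\mathbf{e}^\top x = s$, set $y := \mathbf{e} - x$, so $\mathbf{e}^\top y = n-s$ and $y \in \{0,1\}^n$. The plan is to relate $\ldet(A^\top \Diag(x) A)$ to $\Gamma_{n-s}(W^\top \Diag(y) W)$ through a determinant identity. The natural tool is the relation between $A^\top\Diag(x)A$ and the complementary quantity built from $W$ and $\Diag(y)=\Diag(\mathbf{e}-x)$. Using $A = U\Sigma V^\top$, we have $\det(A^\top\Diag(x)A) = (\prod_i \Sigma_{ii}^2)\det(U^\top\Diag(x)U)$, which accounts for the leading term $2\sum_{i=1}^m \log(\Sigma_{ii})$. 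So the heart of the matter is to prove
\[
\log\det\!\left(U^\top\Diag(x)U\right) = \Gamma_{n-s}\!\left(W^\top\Diag(\mathbf{e}-x)W\right)
\]
for every $0/1$ vector $x$ summing to $s$.

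The key step is a complementation identity: since $[U\;W]$ is orthogonal, $\Diag(x) = [U\;W]\,M\,[U\;W]^\top$ for the appropriate block structure, and I expect that $\det(U^\top\Diag(x)U)$ relates to the nonzero eigenvalues of $W^\top\Diag(\mathbf{e}-x)W$. Concretely, when $x$ is $0/1$, the matrix $W^\top\Diag(\mathbf{e}-x)W$ is a Gram-type matrix whose rank and nonzero spectrum I would compute. Here is where Thm. \ref{cor:propphi} does the work: it tells us that $\phi_t(\lambda) = \sum_{\ell=1}^t \log(\lambda_\ell)$ exactly when $t$ equals the rank $r$ of the argument, and equals $-\infty$ when $t$ exceeds the rank. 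So I would show that at an integral $y = \mathbf{e}-x$, the matrix $W^\top\Diag(y)W$ has rank exactly $n-s$ (generically), making $\Gamma_{n-s}$ collapse to an ordinary log-determinant of the top $n-s$ eigenvalues, which I then match to $\log\det(U^\top\Diag(x)U)$ via the complementation identity. This gives $\zgamma \geq z$.

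For the reverse inequality, I rely on Prop. \ref{prop:gammaconc}: $\Gamma_{n-s}$ is concave, so the maximization defining $\zgamma$ is a concave maximization over the box-and-hyperplane polytope $\{y : \mathbf{e}^\top y = n-s,\ 0\leq y\leq\mathbf{e}\}$. The claim that the optimum is attained at a vertex (an integral point) is what makes the relaxation exact rather than merely valid. I would argue that an optimal $y^\star$ can be taken integral — either by invoking that the concave objective, when restricted to the structure here, has an integral maximizer, or by a rounding/exchange argument that moves any fractional optimal solution to an integral one without decreasing the objective, using Thm. \ref{cor:propphi} to control the value at the rank boundary. The main obstacle I anticipate is precisely this integrality: concave maximization over a polytope need \emph{not} have an integral optimum in general, so exactness must exploit special structure of $\Gamma_{n-s}$ (its behavior at the rank threshold $t = n-s$, where $\phi_t$ degenerates to a plain sum of logs and $-\infty$ past the rank). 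Establishing that the $\Gamma$-function's degeneracy forces the maximizer onto an integral point — rather than just bounding the value — is the crux, and I would expect to handle it by showing any fractional $y$ can be improved toward a $0/1$ point along an edge of the polytope while preserving feasibility and the rank-$(n-s)$ structure.
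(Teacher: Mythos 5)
There is a genuine gap, and it stems from a misreading of what the theorem asserts. In this paper (following the usage of \cite{Nikolov} and the MESP literature), an \emph{exact} convex relaxation is one whose objective function \emph{coincides with the true objective at every feasible integer point} — here, that $2\sum_{i=1}^m\log(\Sigma_{ii})+\Gamma_{n-s}(W^\top\Diag(\mathbf{e}-x)W)=\ldet(A^\top\Diag(x)A)$ for every binary $x$ with $\mathbf{e}^\top x=s$ — together with concavity of the relaxed problem. It does \emph{not} assert $\zgamma=z$, i.e., zero integrality gap. Your entire second half, which tries to produce an integral maximizer of the concave program, is therefore aimed at a claim the paper never makes and which is false in general: if $\zgamma$ always equaled the integer optimum, \ref{prob01} would be solvable by a single convex optimization, and the positive root gaps for the $\Gamma$-bound reported in the paper's numerical experiments (Table 2) would be impossible. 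The obstacle you correctly flag — that concave maximization over a polytope need not have an integral optimum — is real and cannot be overcome by any rank-threshold degeneracy argument; the correct resolution is that no such integrality statement is needed.

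Your first half, by contrast, is essentially the paper's proof. The paper uses the SVD $A=U\Sigma V^\top$ to peel off the constant $2\sum_{i=1}^m\log(\Sigma_{ii})$, and then, with $S$ the support of $x$, $T:=N\setminus S$, $t:=n-s$, runs the complementation chain
\[
\ldet(U^\top\Diag(x)U)=\ldet\bigl(I_m-U^\top\Diag(\mathbf{e}-x)U\bigr)=\ldet\bigl(I_t-U_{T\cdot}U_{T\cdot}^\top\bigr)=\ldet\bigl(W_{T\cdot}W_{T\cdot}^\top\bigr)=\Gamma_t\bigl(W^\top\Diag(\mathbf{e}-x)W\bigr),
\]
where the last equality uses that $W^\top\Diag(\mathbf{e}-x)W=W_{T\cdot}^\top W_{T\cdot}$ shares its nonzero spectrum with the $t\times t$ matrix $W_{T\cdot}W_{T\cdot}^\top$, so that Thm.~\ref{cor:propphi} collapses $\Gamma_t$ to the ordinary log-determinant when the rank is $t$. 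One refinement over your sketch: no ``generic rank'' hedge is needed — if the chosen $s$ rows yield a singular information matrix, then $\rank(W^\top\Diag(\mathbf{e}-x)W)<t$ and Thm.~\ref{cor:propphi} gives $-\infty$ on both sides, so the identity holds unconditionally at binary points. With this identity established, the theorem is finished by invoking Prop.~\ref{prop:gammaconc} for concavity; validity ($\zgamma\geq z$) is then immediate, and nothing more is required.
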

\begin{proof}
We have $A = U\Sigma V^\top$, with $U \in \mathbb{R}^{n \times m}$, $\Sigma \in \mathbb{R}^{m \times m}$ and $V \in \mathbb{R}^{m \times m}$, such  that  $U^\top U=V^\top V = VV^\top = I_m$\,.  
Then, for any   feasible solution $x$ for \ref{prob01}, we have
 \begin{align*}
     \ldet(A^\top \Diag(x) A) &= \ldet(V\Sigma^\top U^\top \Diag(x) U \Sigma V^\top)\\
     &=  \ldet(VV^\top) +\ldet(\Sigma^\top\Sigma) + \ldet( U^\top \Diag(x) U )\\
     &= 2\textstyle\sum_{i= 1}^m \log(\Sigma_{ii}) + \ldet( U^\top \Diag(x) U ).
 \end{align*}
  
   Now, let $S\subset N:=\{1,\ldots,n\}$ be the support  of $x$ with $|S| = s$, $T:=N\setminus S$ and $t:=|T| =  n-s$. Then, we have
\begin{align*}
     &\ldet(U^\top \Diag(x) U) ~=~ \ldet( I_m - U^\top U + U^\top \Diag(x) U)\\
          &\quad =  \ldet( I_m - U^\top \Diag(\mathbf{e}-x) U)
     ~=~\ldet(I_{m} - U_{T\cdot}^\top U_{T\cdot})\\
     &\quad =\ldet(I_{t} - U_{T\cdot}U_{T\cdot}^\top)
     ~=~\ldet\left((I_n - UU^\top)_{T,T}\right)\\
     &\quad =\ldet(W_{ T\cdot} W_{ T\cdot}^\top)
     ~=~ \Gamma_t(W^\top\Diag(\mathbf{e}-x)W),
 \end{align*}
here the fourth equation holds because the non-zero eigenvalues of $U_{T\cdot}^\top U_{T\cdot}$ and $U_{T\cdot}U_{T\cdot}^\top$ are the same, and the last equation comes from the definition of the $\Gamma_t$-function, from Thm. \ref{cor:propphi}, and from the fact that $W^\top\Diag(\mathbf{e}-x)W$ is a rank-$t$ matrix with the same $t$ non-zero eigenvalues as  $W_{T\cdot} W_{ T\cdot}^\top$\,.
 
 Then, for $x\in\{0,1\}^n$, we see that $2\sum_{i= 1}^m \log(\Sigma_{ii}) + \Gamma_{n-s}(W^\top\Diag(\mathbf{e}-x)W)=\ldet(A^\top\Diag(x)A)$. 
     The convexity of the relaxation follows from Prop. \ref{prop:gammaconc}.\qed
\end{proof}

\begin{remark}\label{rem:reducetoMESP}
    For a given  $x\in\{0,1\}^n$ with $\mathbf{e}^\top x =s$, let  $S\subset N:=\{1,\ldots,n\}$ to be the support  of $x$ and $T:=N\setminus S$. Then, we see  from the proof of Thm. \ref{thm:exactrel}  that
    \[
    \ldet(A^\top\Diag(x)A)=2\textstyle\sum_{i= 1}^m \log(\Sigma_{ii}) + \ldet((I_n-UU^\top)_{T,T}),
    \]
where $A=U\Sigma V^\top$ is the real value decomposition of $A$.
Therefore, any instance of \ref{prob01} can be reduced to an instance of MESP, where we search for a  maximum (log-)determinant principal submatrix  of  order $n-s$, from
the input positive-semidefinite matrix $I_n-UU^\top$ of order $n$ and rank $n-m$.   
With the same reasoning, we can see that any instance of MESP, where we search for a  maximum (log-)determinant principal submatrix  of order $s$, from
an input positive-semidefinite order-$n$ symmetric matrix  $C$ of rank $m$ for which all positive eigenvalues  are the same can be reduced to an instance of \ref{prob01}, where we select $n-s$ rows of the input   matrix  $A:=U\in\mathbb{R}^{n\times m}$, where $UU^\top= I_n- \textstyle\frac{1}{\lambda_1(C)}C$ and $U^\top U=I_m$ (i.e., $UU^\top$ is the compact spectral decomposition of $I_n-\textstyle\frac{1}{\lambda_1(C)}C$, and all of its nonzero eigenvalues are $1$). We also note that in case the input matrix $C$  is positive definite, then the corresponding instance of MESP can reduced to an instance of the binary Data-Fusion problem, a special case of \ref{prob01} (see \cite[Thm. 1]{li2022d}).
\end{remark}

\subsection{Duality and variable fixing}\label{sec:dualgamma}

Next, analogously to what we presented for the natural bound in \S\ref{sec:natural_bound}, we use similar techniques as \cite[Sec. 3.4.1-4]{FL2022} to formulate the Lagrangian dual  of \ref{gamma_bound} and use it to fix variables on \ref{prob01}. Although in \ref{gamma_bound}\,,  the lower and upper bounds on the variables are zero and one, we will derive the Lagrangian dual of the more general problem with lower and upper bounds on the variables given respectively by $l$ and $u$, that is, we consider the constraints $a\leq y\leq b$ instead of $0\leq y\leq \mathbf{e}$. The motivation for this, is to derive the technique to fix variables at any subproblem considered during the execution of the B\&B algorithm, when some of the variables may already be fixed. Instead of redefining the problem with less variables in our numerical experiments, we found it more efficient to change the upper bound $b_i$ from one to zero, when variable $i$ is fixed at zero in a subproblem, and similarly,  change the lower bound $a_i$ from zero to one, when variable $i$ is fixed at one. 

Then, more generally considering $a$ and $b$ as the lower and upper bounds on $y$ instead of zero and one,  the Lagrangian dual of \ref{gamma_bound} is
\begin{align}
\ldet(A^\top A)~ \!+\! ~&\!\min\, - \!\!
\textstyle \sum\limits_{i=s-m+1}
^{n-m}\!
\log(\lambda_{i} (\Theta ))     - \omega^\top a + \nu^\top b + \tau (n-s) - (n-s)\nonumber \\
&\text{s.t.} \quad
\diag(W\Theta W^\top)  + \omega - \nu - \tau\mathbf{e} = 0,\label{eq:dual_gamma_dopt_01}\tag{Du-$\Gamma_{{\mbox{\protect\tiny D-Opt(0/1)}}}$}\\
&\qquad\quad \Theta \succ 0,\nu \geq 0, \omega \geq 0.\nonumber
\end{align}

Next, we show how to construct a feasible solution of \ref{eq:dual_gamma_dopt_01} from a feasible solution $\hat y$ of \ref{gamma_bound} such that $\hat r:=\rank(W^\top\Diag(\hat y)W)\geq n-s$,  with the goal of producing a small duality gap. We note that if this condition in $\hat y$ was not satisfied we would have  $\Gamma_{n-s}(W^\top\Diag(y)W) \!=\! -\infty$ (see Thm. \ref{cor:propphi}).
 
We  consider the spectral 
decomposition $W^\top\Diag(\hat{y})W=\sum_{\ell=1}^{n-m} \hat \lambda_\ell \hat u_\ell \hat u_\ell^\top\,,$
with $\hat \lambda_1\geq\hat \lambda_2\geq\cdots\geq \hat \lambda_{\hat r}>\hat \lambda_{\hat{r}+1}=\cdots=\hat \lambda_{n-m}=0$. We  define 
$\hat{\Theta}:=\sum_{\ell=1}^{n-m} {\hat \beta}_\ell \hat{u}_\ell \hat{u}_\ell^\top$\,,
where 
\begin{equation}\label{betaepsilon}
\hat{\beta}_\ell:=\left\{
\begin{array}{ll}
        \textstyle 1/\hat{\lambda}_\ell\,,      
       &\mbox{ for }1\leq \ell\leq \hat{\iota};\\ 
     1/\hat{\delta},&\mbox{ for }\hat{\iota}<\ell\leq \hat{r};\\ 
     (1+\epsilon)/\hat{\delta},&\mbox{ for }\hat{r}<\ell\leq n-m,
\end{array}\right.
\end{equation}
where $\epsilon>0$, $\hat{\iota}$ is the unique integer defined  in Lem. \ref{Ni13} for $\lambda=\hat{\lambda}\in\mathbb{R}_+^{n-m}$ and $t=n-s$; and
$
\hat \delta:=\frac{1}{n-s-\hat \iota}\sum_{\ell=\hat \iota+1}^{n-m}\hat \lambda_\ell
$\,.
From Lem. \ref{Ni13}, we have that $\hat\iota<n-s$. Then, as $n-s\leq \hat{r}$,  we have
\begin{equation*} 
-\textstyle \sum_{\ell=1}^{n-s} \log(\hat{\beta}_{\ell})= \textstyle \sum_{\ell=1}^{\hat{\iota}} \log(\hat{\lambda}_{\ell}) + (n-s-\hat{\iota})\log(\hat{\delta})= \Gamma_{n-s}(W^\top\Diag(\hat y)W).
\end{equation*}
Therefore, the minimum gap between $\hat y$ in \ref{gamma_bound} and feasible solutions of \ref{eq:dual_gamma_dopt_01} of the form $(\hat\Theta,\omega,\nu,\tau)$ is the optimal value of the linear program
\begin{equation}\label{eq:g_theta2}\tag{$\tilde{G}(\hat\Theta)$}
\begin{array}{rrl}
&\min &- \omega^\top a + \nu^\top b + \tau (n-s) - (n-s),\\
&\text{s.t.} 
& \omega - \nu - \tau\mathbf{e} = -\diag(W\hat\Theta W^\top)  ,\\
&&\nu \geq 0, \omega \geq 0.
\end{array}
\end{equation}
To construct an optimal solution $(\tilde{\omega},\tilde{\nu},\tilde{\tau})$ for \ref{eq:g_theta2},  we may use the same   procedure described for \ref{eq:g_theta} in  \S\ref{sec:natural_bound}, 
just replacing $A$ by $W$, $s$ by $n-s$, $l$ by $a$, and $u$ by $b$. 

We note that to obtain the minimum duality gap between $\hat{y}$ in \ref{gamma_bound} and   $(\hat\Theta,\tilde\omega,\tilde\nu,\tilde\tau)$ in \ref{eq:dual_gamma_dopt_01}\,, we should choose $\epsilon$ very close to zero in \eqref{betaepsilon}. In our numerical experiments, considering round-off errors, we set $\epsilon=0$  (see \cite[Sec. 3.4.4.1]{FL2022} for details on a similar analysis on $\epsilon$). We also note that with the dual feasible solution $(\hat\Theta,\tilde\omega,\tilde\nu,\tilde\tau)$ of \ref{eq:dual_gamma_dopt_01}\,, 
the conclusion of Thm. \ref{thm:fix_dopt} still holds,
and we may fix variables in subproblems considered during the execution of the B\&B algorithm for \ref{prob01}, when considering the $\Gamma$-bound. 
Finally, from \cite[Thm. 21]{pfl2024_gmesp_arxiv}, we can conclude that if  $\hat y$  is an optimal solution to \ref{gamma_bound}\,, then $(\hat\Theta,\tilde\omega,\tilde\nu,\tilde\tau)$ is optimal to \ref{eq:dual_gamma_dopt_01}\,.

\subsection{The Data-Fusion problem as a  case of the 0/1 D-Optimality problem }\label{sec:datafusion}

 Next, we exploit the relation between \ref{prob01} and the Data-Fusion problem addressed in \cite{li2022d}.

Let $A := \begin{pmatrix}
    G\\ H
\end{pmatrix}\in\mathbb{R}^{n\times m}$ , with $G \in \mathbb{R}^{p \times m}$, $H \in \mathbb{R}^{q \times m}$. Let $P:=\{1,\ldots,p\}$ and $Q:=N\setminus P$. Assume that $A$ has full column rank and $B:=H^\top H \in \mathbb{S}_{++}^{m}$\,. The  D-optimality problem 
\begin{equation}\label{partdopt}
\max\! \left\{ \ldet \!\left(A^\top \Diag(x) A\right)  :  \mathbf{e}^\top x=s, x_i=1, i\in Q, x\!\in\!\{0,1\}^n\right\},
\end{equation}
where $0<s-q<p$ and  the variables corresponding to the rows of $H$ are  fixed at one, can be reformulated as the following  Data-Fusion problem,
\begin{equation}\label{datafusion}\tag{Fu}
\hypertarget{zfutarget}{\zfuthing}:=\max \left\{ \ldet \left( B  + G^\top \Diag(x) G\right) \, : \, \mathbf{e}^\top x=s-q, \, x\in\{0,1\}^{p}\right\}.
\end{equation}

\subsubsection{Comparison of bounds for the Data-Fusion problem}\label{subsec:datafusionbound}

We consider other  bounds for the Data-Fusion problem \ref{datafusion},
and our  goal in this subsection is to compare them theoretically. 
In similar settings, it is rare to be able to get significant results comparing 
bounds. Exceptionally, we have that the ``factorization bound'' 
dominates the ``spectral bound'' for the 
MESP (see \cite{ChenFampaLee_Fact} and \cite[Thm. 3.4.18]{FL2022}).

To  differentiate the bounds for \ref{prob01} from the bounds for \ref{datafusion}, we denote them respectively by  $\mbox{z}_{\mbox{\protect\tiny $(\cdot)$}}$ and $\mathfrak{z}_{\mbox{\protect\tiny $(\cdot)$}}$\,. 

 \begin{itemize}
 \item The \emph{natural bound} for \ref{datafusion} is defined as 
 \begin{equation}\label{natural}\tag{$\mathcal{N}_{{\mbox{\protect\tiny Fu}}}$}
\begin{array}{llll}
\hypertarget{zntarget}{\znthing}
:= &\max  &\ldet \left( B  + G^\top \Diag(x) G\right),\\
&\text{s.t.} 
&\mathbf{e}^\top x=s-q,\\
&&0\leq x_i\leq 1, \quad i\in P.
\end{array}
\end{equation}

\item The \emph{spectral bound}  
for \ref{datafusion},  from  \cite{KoLeeWayne}, is defined as
    \begin{equation}\label{spectral}\tag{$\mathcal{S}_{{\mbox{\protect\tiny Fu}}}$}
        \hypertarget{zstarget}{\zsthing} := \ldet({B}) +  \textstyle\sum_{i=1}^{s-q} \log \left(1 +  \lambda_i\left(G B^{-1}G^\top\right)\right).
    \end{equation}
\item Let  $L L^\top$ be the Cholesky factorization of $B$. Let $\rho_i(GL^{-\top})$ denote  $\|G_{i\cdot}L^{-\top}\|_2$\,, for $i \in P$, and let $\tau$ be a bijection from $\{1,\ldots,p\}$ to $P$,  such that $\rho_{\tau(i)}(GL^{-\top})$ $ \geq \rho_{\tau(j)}(GL^{-\top})$,  whenever $i \leq j$. The \emph{Hadamard bound} for \ref{datafusion}, from \cite{KoLeeWayne},  is  defined as
\begin{equation}\label{hadamard}\tag{$\mathcal{H}_{{\mbox{\protect\tiny Fu}}}$}
    \hypertarget{zhtarget}{\zhthing} := \ldet({B}) +\textstyle \sum_{i=1}^{s-q} \log\left(1 +  \rho_{\tau(i)}^2\left( G L^{-\top}\right)\right).
\end{equation}

\item 
Let $\Psi \Psi^\top$ be the Cholesky factorization of $I_p + G B^{-1} G^\top$, and $\psi_i^\top$ be the $i$-th row of $\Psi$, for $i\in P:=\{1,\ldots,p\}$. 
The $\Gamma$-\emph{bound}  for  \ref{datafusion}, from \cite{li2022d} (they call it the ``M-DDF bound''), 
is defined as
\begin{equation}\label{zm1a}\tag{$\Gamma_{{\mbox{\protect\tiny Fu}}}$}
\begin{array}{llll}
\hypertarget{zgtarget}{\zgthing}:= \ldet(B) + &\max & \Gamma_{s-q}\left(\textstyle\sum_{i\in P} z_i \psi_i \psi_i^\top\right),\\
&\text{s.t.} 
&\mathbf{e}^\top z = s-q,\\
&&0\leq z_i\leq 1, \quad i\in P.
\end{array}
\end{equation}

\item  
Let $\Phi \Phi^\top$ be the Cholesky factorization of  $I_p-G( B  + G^\top G)^{-1}G^\top$, and  $\phi_i^\top$ be the $i$-th row of $\Phi$, for $i \in P:=\{1,\dots,p\}$. Note that  the greatest eigenvalue of $G( B  + G^\top G)^{-1}G^\top$ 
is strictly less than one, so the matrix $I_p-G( B  + G^\top G)^{-1}G^\top$ is  positive-definite. The 
\emph{complementary $\Gamma$-bound} for \ref{datafusion},
 from \cite{li2022d} (they call it the ``M-DDF-complementary bound''),   is defined as
\begin{equation}\label{zmc1}\tag{comp-$\Gamma_{{\mbox{\protect\tiny Fu}}}$}
\begin{array}{llll}
\hypertarget{zcgtarget}{\zcgthing}:= \ldet(B  + G^\top G) + &\max & \Gamma_{p-(s-q)}\left(\textstyle\sum_{i\in P} z_i \phi_i \phi_i^\top\right),\\
&\text{s.t.} 
&\mathbf{e}^\top z =p - (s - q),\\
&&0\leq z_i\leq 1, \quad i\in P.
\end{array}
\end{equation}

 \end{itemize}

Next we see that \ref{gamma_bound} is a generalization to \ref{prob01} of \ref{zmc1}\,.

\begin{theorem}\label{prop:comp}

The  $\Gamma$-bound  for \eqref{partdopt} is the same as $\zcg$\,.
\end{theorem}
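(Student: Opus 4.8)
The plan is to match the two bounds term by term, writing the $\Gamma$-bound of \eqref{partdopt} in its complementary variable and showing it coincides with $\zcg$. First I would record the dimensional bookkeeping: with $n=p+q$ and the fixings $x_i=1$, $i\in Q$, the budget constraint becomes $\sum_{i\in P}x_i=s-q$, so in the complementary variable $y:=\mathbf{e}-x$ used in \ref{gamma_bound} we have $y_i=0$ for $i\in Q$ and $\sum_{i\in P}y_i=n-s=p-(s-q)$, with $0\le y_i\le 1$ for $i\in P$. Thus the feasible set for the inner maximization of the $\Gamma$-bound of \eqref{partdopt} is exactly the feasible set of the maximization defining $\zcg$ (after identifying $y$ restricted to $P$ with $z$), and the two $\Gamma$-orders agree since $n-s=p-(s-q)$.

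Next I would match the constant terms. Since $A=U\Sigma V^\top$ with $U^\top U=V^\top V=I_m$, we have $A^\top A=V\Sigma^2V^\top$, hence $2\sum_{i=1}^m\log(\Sigma_{ii})=\ldet(\Sigma^\top\Sigma)=\ldet(A^\top A)$. Because $A^\top A=G^\top G+H^\top H=B+G^\top G$, the constant of the $\Gamma$-bound equals $\ldet(B+G^\top G)$, the constant appearing in $\zcg$.

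The heart of the proof is to show that, for every feasible $y$ (supported on $P$), the matrices inside the two $\Gamma$-functions have the same nonzero eigenvalues; this suffices because $\Gamma_t$ depends only on $t$ and on the multiset of nonzero eigenvalues of its argument (the value $\phi_t$ is unchanged by appending zero eigenvalues, since the integer $\iota$ of Lem. \ref{Ni13} satisfies $\iota<t$ and the tail sum $\sum_{\ell>\iota}\lambda_\ell$ ignores trailing zeros). Writing $W_P$ for the rows of $W$ indexed by $P$, the $\Gamma$-bound argument is $W^\top\Diag(y)W=W_P^\top\Diag(y_P)W_P$, while $\zcg$ uses $\sum_{i\in P}y_i\phi_i\phi_i^\top=\Phi^\top\Diag(y_P)\Phi$. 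Using that $MN$ and $NM$ share the same nonzero eigenvalues (with $M=W_P^\top$, $N=\Diag(y_P)W_P$, and analogously for $\Phi$), both arguments have the same nonzero spectrum as $\Diag(y_P)W_PW_P^\top$ and $\Diag(y_P)\Phi\Phi^\top$, respectively. Hence it remains to prove the single identity $W_PW_P^\top=\Phi\Phi^\top$.

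I expect this last identity to be the main (though short) obstacle, and it is where the block structure of $A$ enters. Since $WW^\top=I_n-UU^\top$, the principal submatrix gives $W_PW_P^\top=I_p-U_PU_P^\top$, where $U_P$ is the top $p\times m$ block of $U$. From $U=AV\Sigma^{-1}$ we get $U_P=GV\Sigma^{-1}$, so $U_PU_P^\top=G\,V\Sigma^{-2}V^\top G^\top=G(A^\top A)^{-1}G^\top=G(B+G^\top G)^{-1}G^\top$. Therefore $W_PW_P^\top=I_p-G(B+G^\top G)^{-1}G^\top=\Phi\Phi^\top$ by the definition of $\Phi$. Combining the three matches --- identical feasible regions and $\Gamma$-order, identical constants, and identical inner objectives --- yields equality of the two bounds. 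One should also note that the finiteness/rank threshold $t=n-s<p$ holds by $0<s-q<p$, so $\Gamma_t$ is well defined and is finite (or $-\infty$) simultaneously on both sides.
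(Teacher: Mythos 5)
Your proposal is correct and follows essentially the same route as the paper's proof: matching the feasible sets and the constants, invoking the invariance of nonzero spectra under exchanging $MN$ and $NM$, and reducing everything to the key identity $W_{P\cdot}W_{P\cdot}^\top=\Phi\Phi^\top$, which you establish via the SVD just as the paper does (your derivation through $U_{P\cdot}=GV\Sigma^{-1}$ is only a cosmetic variant of the paper's direct substitution of $A=U\Sigma V^\top$). Your explicit remark that $\Gamma_t$ is insensitive to padding by zero eigenvalues, since the $\iota$ of Lem.~\ref{Ni13} satisfies $\iota<t$ and the tail sum is unchanged, is a valid justification of a step the paper leaves implicit.
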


\begin{proof}
We note that  $p - (s-q) =n-s$, and   $ \ldet(B  + G^\top G) = \ldet(A^\top A)$. 
Moreover, we have
\[
\begin{array}{ll}
    \Phi \Phi^\top &= I_p - G(B  + G^\top G)^{-1}G^\top
    ~=~ I_p - A_{P \cdot}(A^\top A)^{-1}A_{P \cdot}^\top\\
    &= I_p - (U\Sigma V^\top)_{P\cdot}(V \Sigma U^\top U\Sigma V^\top)^{-1}((U\Sigma V^\top)_{P\cdot})^\top\\
    &= I_p - U_{P\cdot}\Sigma V^\top((V \Sigma) (\Sigma V^\top))^{-1}(U_{P\cdot}\Sigma V^\top)^\top\\
    &= I_p - U_{P\cdot}(\Sigma V^\top)(\Sigma V^\top)^{-1} (V\Sigma)^{-1}(V\Sigma) U_{P\cdot}^\top\\
    &= I_p - U_{P\cdot}U_{P\cdot}^\top 
    ~=~ (I_n - UU^\top)_{P,P}
     ~=~ W_{ P\cdot} W_{ P\cdot}^\top\,,
\end{array}
\]
recalling that  $W$ was defined for \ref{gamma_bound}\,, $\Phi$ was defined for \ref{zmc1}\,, and $A=U\Sigma V^\top$ is the real singular-value decomposition of $A$.

Let $\lambda^+(X)$ represent the set of positive eigenvalues of $X$, then
\[
\begin{array}{ll}
&\lambda^+(\textstyle\sum_{i\in P} z_i \phi_i \phi_i^\top ) ~=~ \lambda^+(\Phi^\top \Diag(z)\Phi)
     ~=~ \lambda^+(\Phi^\top \Diag(z)^{1/2}\Diag(z)^{1/2}\Phi)\\
    &\quad = \lambda^+(\Diag(z)^{1/2}\Phi \Phi^\top \Diag(z)^{1/2})
    ~=~ \lambda^+(\Diag(z)^{1/2} W_{P\cdot } W_{P\cdot }^\top\Diag(z)^{1/2})\\
    &\quad = \lambda^+(W_{P\cdot }^\top \Diag(z)^{1/2}\Diag(z)^{1/2}W_{P\cdot})
    ~=~ \lambda^+(W_{P\cdot }^\top \Diag(z)W_{P\cdot })\\
    &\quad = \lambda^+(\textstyle\sum_{i\in P} z_i w_i w_i^\top).
\end{array}
\]
Therefore,  $\Gamma_{n-s}(\sum_{i\in P} z_i \phi_i \phi_i^\top )= \Gamma_{n-s}(\sum_{i\in P} z_i w_i w_i^\top)$, and \ref{zmc1} is equivalent to
\begin{equation*} 
\begin{array}{llll}
 \ldet(A^\top A) + &\max & \Gamma_{n-s}(\textstyle\sum_{i\in P} z_i w_i w_i^\top),\\
&\text{s.t.} 
&\mathbf{e}^\top z = n-s,\\
&&0\leq z_i\leq 1, \quad i\in P.
\end{array}
\end{equation*} 
Finally, as $x_i=1$, for $i\in Q$ in \ref{prob01}, we have $y_i=0$, for $i\in Q$ in its corresponding \ref{gamma_bound}\,, which becomes
\begin{equation*} 
\begin{array}{lllll}
&\textstyle 2 \sum_{i= 1}^m \log(\Sigma_{ii}) + &\max  &\Gamma_{n-s}(\sum_{i\in P} y_i w_i w_i^\top),\\
&&\text{s.t.} 
&\mathbf{e}^\top y = n-s,\\
&&&0\leq y_i\leq 1, \quad i\in P.
\end{array}
\end{equation*}
As $2\sum_{i= 1}^m \log(\Sigma_{ii}) = \ldet(A^\top A)$,  we verify that the bounds  are equivalent. \qed
\end{proof}

Considering now the bounds presented for \ref{datafusion}, we note that in general we have no dominance among some of bounds, as  illustrated in what follows.

\begin{example}\label{ex:zn_zs_zh}
 Let  
\[
G:= \begin{pmatrix*}[r]
1~&~-1~&\quad 1\\
1~&~0~&\quad 1\\
-1~&~0~&\quad 1\\
1~&~1~&\quad 1\\
1~&~0~&\quad 0
\end{pmatrix*}
~ \mbox{ and } ~ 
H:= \begin{pmatrix*}[r]
0~&~1~&~0\\
-1~&~1~&~-1\\
1~&~-1~&~0\\
\end{pmatrix*}.\]
In Table \ref{tab:example_bounds_a}, we present the bounds $\zn$\,,  $\zs$ and  $\zh$ for \ref{datafusion}, with the data defined by $G$, $B:=H^\top H$, and $s-q\in\{1,2,3\}$.
\begin{table}[!ht]
\centering
\begin{tabular}{c|ccc}
$s-q$ & $\zn$ & $\zs$ & $\zh$ \\ \hline
1     & 2.622     & 2.324           & 1.946           \\ \hline
2     & 3.714      & 4.302           & 3.738           \\ \hline
3     & 4.205     & 4.745           & 4.836           
\end{tabular}
\caption{Comparison of bounds for \ref{datafusion}}
\label{tab:example_bounds_a} 
\end{table}

We note that for $s-q = 1$, we have $\zh <  \zs < \zn$\,, for $s-q = 2$, we have
$\zn < \zh< \zs$\,, and  for $s-q = 3$, we have $\zn  < \zs < \zh$\,, confirming the nondominance among these three bounds. 
\end{example}

\begin{example} 
Let  
\[
G:= \begin{pmatrix*}[r]
1~&~0~&\quad 1\\
0~&~-1~&\quad 0\\
1~&~1~&\quad 0\\
0~&~1~&\quad 1\\
-1~&~-1~&\quad -1\\
\end{pmatrix*}
~ \mbox{ and } ~ 
H:= \begin{pmatrix*}[r]
0~&~1~&~0\\
-1~&~1~&~-1\\
1~&~-1~&~0\\
\end{pmatrix*}.\]
In Table \ref{tab:example_bounds_b}, we present the bounds $\zn$\,, $\zcg$\,, $\zh$ and  $\zg$ for \ref{datafusion} with the data defined by $G$, $B:=H^\top H$, and $s-q\in\{1,2\}$.
\begin{table}[!ht]
\centering
\begin{tabular}{c|cccc}
$s-q$ & $\zn$ & $\zcg$  & $\zh$ & $\zg$\\ \hline
1     & 2.174 & 2.024                & 1.792  &  1.792        \\ \hline
2     & 3.162 & 3.174                & 3.584  &  3.196        
\end{tabular}
\caption{Comparison of bounds for \ref{datafusion}}
\label{tab:example_bounds_b}
\end{table}

We note that
for $s-q = 1$, we have  $ \zcg< \zn$ and $ \zcg> \zh$\,. On the other side,  for $s-q = 2$, we have
 $ \zcg > \zn$ and $ \zcg< \zh$\,, confirming  nondominance between $ \zcg$ and neither $  \zn$ nor $  \zh$\,.
 
Moreover, for $s-q = 1$, we have  $ \zg< \zn$ and $ \zg< \zcg$\,. On the other side,  for $s-q = 2$, we have
 $ \zg > \zn$ and $ \zg> \zcg$\,,  confirming the nondominance between $ \zg$ and neither $  \zn$ nor $  \zcg$\,.
\end{example}

Despite the nondominance shown in these examples, we summarize here some relations  that we will establish in the remainder of this section.
\begin{itemize}
\item For $s-q\geq 1$, we have $\zg \leq \zs$\,.
\item For $s-q\geq 1$, we have $\zcg \leq \zs$\,.
    \item For $s-q \geq m\geq 1$, we have $\zn \leq \zs$\,, but for $s-q = m-1$, $m\geq 2$, we can have $\zn> \zs$\,.
    \item For  $s -q = 1$, we have $\zh = \zfu$\,, where $\zfu$ is the optimal value  of \ref{datafusion}.   
    \item For  $s-q\geq 1$, we have $\zs - \zh \leq  \textstyle\sum_{i =s-q +1}^{p} ~\log \left(\frac{d_i\left(I_p + G B^{-1}G^\top\right)}{\lambda_i\left(I_p + G B^{-1}G^\top\right)}\right)$.
    \item For $s-q\geq m\geq 1$, we have $\zs - \zh \leq  \textstyle\sum_{i =s-q +1}^{p} ~\log \left(d_i\left(I_p + G B^{-1}G^\top\right)\right)$.
    \item For  $s-q\geq 1$, we have   
$
\zg - \zh \leq \textstyle\sum_{i =2}^{s-q} \log \left(\frac{d_{1}\left(I_p + G B^{-1}G^\top\right)}{d_i\left(I_p + G B^{-1}G^\top\right)}\right).
$
\end{itemize}

\begin{lemma}\label{spec_ldetAAa}
For $s-q \geq 1$, we have   
\[
\ldet(B+G^\top G) = \zs + \textstyle\sum_{i=s-q+1}^p ~\log(1+\lambda_i(GB^{-1}G^\top )).
\]
\end{lemma}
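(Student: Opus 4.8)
The plan is to recognize the claim as a repackaging of the Weinstein--Aronszajn (Sylvester) determinant identity. First I would absorb the definition of $\zs$ into the right-hand side: writing out $\zs = \ldet(B) + \sum_{i=1}^{s-q}\log(1+\lambda_i(GB^{-1}G^\top))$ and combining it with the trailing sum $\sum_{i=s-q+1}^{p}\log(1+\lambda_i(GB^{-1}G^\top))$ collapses the two partial sums into a single sum over all $p$ indices. Hence the target reduces to the identity
\[
\ldet(B+G^\top G) = \ldet(B) + \textstyle\sum_{i=1}^{p}\log\!\left(1+\lambda_i(GB^{-1}G^\top)\right),
\]
with no further reference to the split point $s-q$.

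Next, I would prove this reduced identity in three short steps. Since $B\in\mathbb{S}^m_{++}$ is invertible, factoring $B$ out gives $\ldet(B+G^\top G)-\ldet(B) = \ldet\!\left(B^{-1}(B+G^\top G)\right) = \ldet(I_m + B^{-1}G^\top G)$. Then I would apply Sylvester's determinant identity $\det(I_m + XY) = \det(I_p + YX)$ with $X:=B^{-1}G^\top\in\mathbb{R}^{m\times p}$ and $Y:=G\in\mathbb{R}^{p\times m}$, yielding $\det(I_m + B^{-1}G^\top G) = \det(I_p + GB^{-1}G^\top)$. Finally, diagonalizing the symmetric positive-semidefinite matrix $GB^{-1}G^\top$ gives $\ldet(I_p + GB^{-1}G^\top) = \sum_{i=1}^p \log(1+\lambda_i(GB^{-1}G^\top))$, which is exactly the sum appearing above.

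There is no real obstacle here; the statement is essentially a bookkeeping identity. The only point requiring a little care is the dimension mismatch: $B^{-1}G^\top G$ is $m\times m$ while $GB^{-1}G^\top$ is $p\times p$, so the two matrices carry different numbers of eigenvalues. This is reconciled by Sylvester's identity, or equivalently by noting that $GB^{-1}G^\top$ has rank at most $m$, so its (at least) $p-m$ zero eigenvalues contribute factors $\log(1+0)=0$ while its nonzero eigenvalues coincide with those of $B^{-1}G^\top G$. Alternatively, one may symmetrize via $B^{-1/2}(B+G^\top G)B^{-1/2} = I_m + (GB^{-1/2})^\top(GB^{-1/2})$ and use that $(GB^{-1/2})^\top(GB^{-1/2})$ and $(GB^{-1/2})(GB^{-1/2})^\top = GB^{-1}G^\top$ share the same nonzero spectrum. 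To finish I would split the $p$-term sum at $s-q$ and re-identify $\ldet(B)+\sum_{i=1}^{s-q}\log(1+\lambda_i(GB^{-1}G^\top))$ as $\zs$, recovering the stated form.
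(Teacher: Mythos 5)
Your proof is correct and takes essentially the same route as the paper: both arguments factor out $B$, pass from the $m\times m$ determinant to $\det(I_p + GB^{-1}G^\top)$ using the fact that $XY$ and $YX$ share the same nonzero eigenvalues (the paper phrases this via the $B^{-1/2}$-symmetrization that you yourself mention as an alternative, you phrase it as Sylvester's determinant identity), and then split the eigenvalue sum at $s-q$ to recover $\zs$\,.
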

\begin{proof}

\begin{align*}
     \ldet(B + G^\top G) 
     &= \ldet(B) + \ldet(I_m + B^{-1/2}G^\top G B^{-1/2})\\
     &= \ldet(B) + \ldet(I_p + G B^{-1} G^\top)\\
     &= \ldet(B) + \textstyle\sum_{i = 1}^p \log(\lambda_i(I_p + G B^{-1} G^\top))\\
     &= \zs  + \textstyle\sum_{i = s-q+1}^p \log(1+\lambda_i(G B^{-1} G^\top)),
\end{align*}
where the second equation follows from the fact that the matrices $B^{-\frac{1}{2}}G^\top GB^{-\frac{1}{2}}$ and $GB^{-1}G^\top$ have the same positive eigenvalues.
 \qed
\end{proof}

\begin{theorem}\label{spec_ldetAA}
For $s-q \geq m\geq 1$, we have   $\zs = \ldet(B+G^\top G)$.
\end{theorem}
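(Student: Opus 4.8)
The plan is to build directly on Lemma \ref{spec_ldetAAa}, which already supplies the identity
\[
\ldet(B+G^\top G) = \zs + \textstyle\sum_{i=s-q+1}^p \log(1+\lambda_i(GB^{-1}G^\top)).
\]
So it suffices to show that, under the hypothesis $s-q\geq m$, the residual sum on the right-hand side vanishes. Everything then reduces to a rank-counting argument on the $p\times p$ matrix $GB^{-1}G^\top$.

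First I would bound its rank. Since $B\in\mathbb{S}^m_{++}$, the matrix $B^{-1}$ is nonsingular, so $\rank(GB^{-1}G^\top)=\rank(G)\leq m$, simply because $G\in\mathbb{R}^{p\times m}$ has only $m$ columns. Consequently the positive-semidefinite matrix $GB^{-1}G^\top$ has at most $m$ nonzero eigenvalues; with the paper's convention $\lambda_1(GB^{-1}G^\top)\geq\cdots\geq\lambda_p(GB^{-1}G^\top)$, this means $\lambda_i(GB^{-1}G^\top)=0$ for every $i>m$, i.e.\ all the vanishing eigenvalues are the trailing ones.

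Next I would invoke the hypothesis. For $s-q\geq m$, every index appearing in the residual sum satisfies $i\geq s-q+1\geq m+1>m$, so $\lambda_i(GB^{-1}G^\top)=0$ and each summand equals $\log(1+0)=0$. Hence the residual sum is identically zero, and the identity of Lemma \ref{spec_ldetAAa} collapses to $\zs=\ldet(B+G^\top G)$, as claimed. There is essentially no obstacle here; the only point requiring care is the eigenvalue indexing, namely confirming that the zero eigenvalues occupy exactly the tail positions $i>m$ so that the summation range $\{s-q+1,\dots,p\}$ lands entirely in the zero-eigenvalue regime. The condition $s-q\geq m$ is precisely what guarantees this.
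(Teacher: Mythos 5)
Your proposal is correct and follows essentially the same route as the paper's proof: both invoke Lemma \ref{spec_ldetAAa} and kill the residual sum $\sum_{i=s-q+1}^p \log(1+\lambda_i(GB^{-1}G^\top))$ by observing that $GB^{-1}G^\top$ has rank at most $m$, so the eigenvalues indexed by $i > m$ (hence all indices $i \geq s-q+1$ under the hypothesis $s-q \geq m$) vanish. If anything, your version is marginally more careful, since you only use $\rank(GB^{-1}G^\top) = \rank(G) \leq m$, whereas the paper asserts equality with $m$ (which would require $G$ to have full column rank, an assumption not needed for the argument).
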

\begin{proof}
As  $\rank(GB^{-1}G^\top)= \rank(B^{-\frac{1}{2}}G^\top GB^{-\frac{1}{2}})=m$, we have 
\[
\textstyle\sum_{i = s-q+1}^p \log(1+\lambda_i(G B^{-1} G^\top))=\sum_{i = s-q+1}^p \log(1)=0.
\]
The result then follows from Lem. \ref{spec_ldetAAa}.
 \qed
\end{proof}

Motivated  specifically by the
dominance of the ``spectral bound''
by the  ``factorization bound'' 
for the 
Maximum-Entropy Sampling Problem (see \cite{ChenFampaLee_Fact} and \cite[Thm. 3.4.18]{FL2022}), we 
have the following result. 

\begin{theorem}\label{dominance_zm_speca}
    For  $s-q\geq 1$, we have $\zg \leq \zs$\,.
\end{theorem}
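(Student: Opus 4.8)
The plan is to peel off the common additive constant and reduce the claim to a single eigenvalue inequality that holds for every feasible design variable. Write $t:=s-q$ and recall that the $\psi_i$ defining $\zg$ satisfy, with $\Psi$ the matrix whose rows are the $\psi_i^\top$, the identity $\Psi\Psi^\top = I_p+GB^{-1}G^\top=:D$ (equivalently, $\sum_{i\in P}z_i\psi_i\psi_i^\top=\Psi^\top\Diag(z)\Psi$ has the same nonzero spectrum as $\Diag(z)^{1/2}D\Diag(z)^{1/2}$). First I would record two identities. Subtracting $\ldet(B)$ and using $\lambda_i(D)=1+\lambda_i(GB^{-1}G^\top)$ gives $\zs-\ldet(B)=\sum_{i=1}^{t}\log\lambda_i(D)$. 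On the other side, since $\Psi^\top\Diag(z)\Psi$ and $\Diag(z)^{1/2}D\Diag(z)^{1/2}$ are of the form $AB$ and $BA$ and hence cospectral, and $\Gamma_t$ depends only on the eigenvalues of its argument, $\Gamma_t(\Psi^\top\Diag(z)\Psi)=\phi_t\big(\lambda(\Diag(z)^{1/2}D\Diag(z)^{1/2})\big)$. As the right-hand side of the target is independent of $z$, it then suffices to prove $\phi_t(\mu)\le\sum_{i=1}^{t}\log\nu_i$ for every feasible $z$, where $\mu:=\lambda(\Diag(z)^{1/2}D\Diag(z)^{1/2})$ and $\nu:=\lambda(D)$; maximizing over $z$ and re-adding $\ldet(B)$ yields $\zg\le\zs$.

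Next I would assemble the spectral information tying $\mu$ to $\nu$. Since $0\le z\le\mathbf{e}$ gives $\Diag(z)\preceq I_p$, Loewner monotonicity yields the pointwise domination $\mu_i\le\nu_i$ for all $i$. Since $\mathbf{e}^\top z=t$ and $z\le\mathbf{e}$, the trace equals $\sum_j z_j D_{jj}$, and Schur--Horn (sum of the $t$ largest diagonal entries at most sum of the $t$ largest eigenvalues) gives $\sum_i\mu_i\le\sum_{i=1}^{t}\nu_i$. Sharper still, Horn's inequalities applied to the spectrum of the product $D\,\Diag(z)$ give the log-majorization $\sum_{i=1}^{k}\log\mu_i\le\sum_{i=1}^{k}\log\nu_i+\sum_{i=1}^{k}\log z_{[i]}$ for every $k$, where $z_{[1]}\ge\cdots\ge z_{[p]}$ are the sorted multipliers, $z_{[i]}\le 1$, and $\sum_i z_{[i]}=t$.

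With $\iota$ the integer attached to $\mu$ by Lem.~\ref{Ni13} (for $t=s-q$), I would split $\phi_t(\mu)=\sum_{i=1}^{\iota}\log\mu_i+(t-\iota)\log\big(\tfrac{1}{t-\iota}\sum_{i>\iota}\mu_i\big)$, bound the first block by $\sum_{i=1}^{\iota}\log\nu_i$ using $\mu_i\le\nu_i$, and reduce the claim to showing the averaged tail term is at most $\sum_{i=\iota+1}^{t}\log\nu_i$. This last step is the main obstacle: pointwise domination together with the single trace cap $\sum_i\mu_i\le\sum_{i=1}^t\nu_i$ are by themselves \emph{insufficient}---one can write down sorted vectors satisfying both for which $\phi_t$ strictly exceeds $\sum_{i=1}^{t}\log\nu_i$---so the argument must genuinely use the joint constraint coupling the budget of $z$ to the spectrum. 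Concretely, I would feed the log-majorization above, with $z_{[i]}\le 1$ and $\sum_i z_{[i]}=t$, into an AM--GM estimate controlling $\tfrac{1}{t-\iota}\sum_{i>\iota}\mu_i$ by the geometric mean of $\nu_{\iota+1},\dots,\nu_t$; the defining inequality $\mu_\iota>\tfrac{1}{t-\iota}\sum_{\ell>\iota}\mu_\ell\ge\mu_{\iota+1}$ from Lem.~\ref{Ni13} is what makes the water-level at position $\iota$ compatible with this estimate.

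A cleaner alternative I would pursue in parallel---mirroring the factorization-dominates-spectral comparison for the MESP that motivates the statement---is weak duality. Since $\zg-\ldet(B)$ is the optimal value of a concave maximization (concavity by Prop.~\ref{prop:gammaconc}) admitting a Lagrangian dual analogous to \ref{eq:dual_gamma_dopt_01}, any dual-feasible point upper-bounds $\zg$. I would therefore exhibit, in closed form, a dual-feasible triple built in the eigenbasis of $D$---taking $\Theta$ diagonal there and placing the multiplier $\tau$ at the threshold between the $t$-th and $(t+1)$-th eigenvalues of $D$, exactly as in the closed-form constructions of Secs.~\ref{sec:natural_bound} and~\ref{sec:dualgamma}---and check that its objective collapses to $\sum_{i=1}^{t}\log\nu_i=\zs-\ldet(B)$. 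Verifying feasibility and this value collapse then becomes the crux of this route, replacing the eigenvalue estimate above.
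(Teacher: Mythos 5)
Your proposal is a plan rather than a proof: both routes stop exactly at their crux. In Route~1 you correctly observe (and it is to your credit) that pointwise domination $\mu_i\leq\nu_i$ together with the trace cap cannot by themselves give $\phi_t(\mu)\leq\sum_{i=1}^{t}\log\nu_i$, but the proposed repair --- feeding Horn's log-majorization into an AM--GM estimate for the averaged tail $\frac{1}{t-\iota}\sum_{i>\iota}\mu_i$ --- is never carried out, and nothing in the proposal indicates how the factor $\prod_i z_{[i]}\leq 1$ interacts with the water level at $\iota$ from Lem.~\ref{Ni13} to close the bound. That tail estimate is the entire difficulty of this route, and it is left open. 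Route~2 has the right idea, and it is in fact the paper's idea: weak duality against the Lagrangian dual \eqref{dualzm1a} of \ref{zm1a}\,, so that any dual-feasible point whose objective value equals $\zs$ proves $\zg\leq\zs$\,. But you explicitly defer ``verifying feasibility and this value collapse,'' which is the whole content of the theorem; as written, neither route constitutes a proof.

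Moreover, your guess for the dual witness in Route~2 is more complicated than what is needed, and the complication is what blocks you. The paper's witness is simply $\hat\Theta:=(\Psi^\top\Psi)^{-1}$, $\hat\nu:=0$, $\hat\omega:=0$, $\hat\tau:=1$. Since $\Psi\Psi^\top=I_p+GB^{-1}G^\top\succ 0$, the factor $\Psi$ is square and invertible (e.g., the Cholesky factor), hence $\Psi\hat\Theta\Psi^\top=I_p$ and the dual constraint $\diag(\Psi\hat\Theta\Psi^\top)+\omega-\nu-\tau\mathbf{e}=0$ holds exactly with the \emph{constant} multiplier $\tau=1$; no placement of $\tau$ at a threshold between the $t$-th and $(t+1)$-st eigenvalues of $D$ is required --- that water-filling construction (Secs.~\ref{sec:natural_bound} and \ref{sec:dualgamma}) is the one adapted to a primal point $\hat y$, not to $D$ itself, so importing it here sends you down the wrong path. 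With the paper's witness the value collapse is one line: $-\sum_{i=p-t+1}^{p}\log\lambda_i(\hat\Theta)=\sum_{i=1}^{t}\log\lambda_i(\Psi^\top\Psi)=\sum_{i=1}^{t}\log\lambda_i\left(I_p+GB^{-1}G^\top\right)$, and since $\hat\nu=0$ and $\hat\tau(s-q)-(s-q)=0$, the dual objective equals $\zs$\,. Had you committed to this specific triple and done the two-line verification, your Route~2 would be essentially identical to the paper's proof; as it stands, the decisive step is missing.
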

\begin{proof}

The Lagrangian dual  of 
	 \ref{zm1a} is 
\begin{equation}\label{dualzm1a}
\begin{array}{lll}
\ldet {B} +&\min &-\textstyle\sum_{i = p - (s-q) + 1}^{p} \log(\lambda_{i} (\Theta ))    + \textstyle\sum_{i=1}^{p}\nu_i + \tau (s-q) - (s-q),\\
&\text{s.t.} 
&\diag(\Psi \Theta \Psi^\top)  + \omega - \nu - \tau\mathbf{e} = 0,\\
&&\Theta \succ 0,\, \nu \geq 0,\, \omega \geq 0.
\end{array}
\end{equation} 
Then, it suffices to construct a feasible solution to \eqref{dualzm1a} with objective value  equal to $\zs$\,. 
We consider $\hat\Theta := (\Psi^\top \Psi)^{-1}$, 
$\hat\omega := 0$, $\hat\nu := 0$ and $\hat\tau := 1$, and
we can verify that $(\hat\Theta,\hat\nu,\hat\omega,\hat\tau)$ is a feasible solution to \eqref{dualzm1a} with objective value  
\begin{align*}
   & \ldet(B) - \textstyle\sum_{i =p-(s-q) +1}^{p} \log(\lambda_{i} (\hat\Theta ))  ~=~  \ldet({B}) +  \textstyle\sum_{i=1}^{s-q} \log (\lambda_{i} (\Psi^\top \Psi )) \\
    &\quad = \ldet({B}) +  \textstyle\sum_{i=1}^{s-q} \log (\lambda_{i} (\Psi \Psi^\top ))
    ~=~\ldet({B}) +  \textstyle\sum_{i=1}^{s-q} \log (  \lambda_i(I_p + G B^{-1} G^\top))\\
    &\quad = \zs\,.   
\end{align*}

\vspace{-15pt}
\qed
\end{proof}

\begin{theorem}\label{dominance_zmc_speca}
For $s-q\geq 1$, we have $\zcg \leq \zs$\,.
\end{theorem}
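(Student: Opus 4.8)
The plan is to mirror the proof of Thm.~\ref{dominance_zm_speca}: form the Lagrangian dual of the concave maximization defining $\zcg$, exhibit a single feasible dual point, and show that its objective value collapses exactly to $\zs$; weak duality then gives $\zcg\le\zs$. Writing the maximization in \ref{zmc1} as $\max\{\Gamma_{p-(s-q)}(\Phi^\top\Diag(z)\Phi):\mathbf{e}^\top z=p-(s-q),\ 0\le z\le\mathbf{e}\}$, where $\Phi$ is the square, invertible Cholesky factor of the positive-definite matrix $I_p-G(B+G^\top G)^{-1}G^\top$, the same derivation that produced \eqref{dualzm1a} --- now with $\Psi$ replaced by $\Phi$ and the target rank $s-q$ replaced by $p-(s-q)$ --- yields the dual
\[
\ldet(B+G^\top G)+\min\Big\{-\textstyle\sum_{i=s-q+1}^{p}\log(\lambda_i(\Theta))+\textstyle\sum_{i=1}^p\nu_i+\tau(p-(s-q))-(p-(s-q))\Big\},
\]
over $\diag(\Phi\Theta\Phi^\top)+\omega-\nu-\tau\mathbf{e}=0$, $\Theta\succ0$, $\nu,\omega\ge0$. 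Note the index set is $i=s-q+1,\dots,p$, since the $t=p-(s-q)$ largest eigenvalues of $\Theta$ contribute.

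First I would test the candidate $\hat\Theta:=(\Phi^\top\Phi)^{-1}$, $\hat\omega:=0$, $\hat\nu:=0$, $\hat\tau:=1$. Since $\Phi$ is invertible, $\Phi\hat\Theta\Phi^\top=\Phi(\Phi^\top\Phi)^{-1}\Phi^\top=I_p$, so $\diag(\Phi\hat\Theta\Phi^\top)=\mathbf{e}$ and the equality constraint reads $\mathbf{e}-\mathbf{e}=0$; feasibility holds. With $\hat\nu=0$ and $\hat\tau=1$, the scalar terms $\tau(p-(s-q))-(p-(s-q))$ cancel, leaving the objective value $\ldet(B+G^\top G)-\sum_{i=s-q+1}^p\log(\lambda_i(\hat\Theta))$. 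Because the eigenvalues of $\hat\Theta$ are the reciprocals, in reversed order, of those of $\Phi^\top\Phi$, this equals $\ldet(B+G^\top G)+\sum_{j=1}^{p-(s-q)}\log(\lambda_j(\Phi^\top\Phi))$.

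The crux is identifying these eigenvalues. Since $\Phi$ is square, $\Phi^\top\Phi$ and $\Phi\Phi^\top=I_p-G(B+G^\top G)^{-1}G^\top$ share all eigenvalues, and the Woodbury identity gives
\[
I_p-G(B+G^\top G)^{-1}G^\top=(I_p+GB^{-1}G^\top)^{-1}.
\]
Hence $\lambda_j(\Phi^\top\Phi)=1/\lambda_{p-j+1}(I_p+GB^{-1}G^\top)$, so $\sum_{j=1}^{p-(s-q)}\log(\lambda_j(\Phi^\top\Phi))=-\sum_{l=s-q+1}^p\log(\lambda_l(I_p+GB^{-1}G^\top))$. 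Splitting off the full determinant $\sum_{l=1}^p\log(\lambda_l(I_p+GB^{-1}G^\top))=\ldet(I_p+GB^{-1}G^\top)=\ldet(B+G^\top G)-\ldet(B)$ (the factorization used in Lem.~\ref{spec_ldetAAa}) and using $\lambda_l(I_p+GB^{-1}G^\top)=1+\lambda_l(GB^{-1}G^\top)$, the dual objective becomes
\[
\ldet(B+G^\top G)-\big(\ldet(B+G^\top G)-\ldet(B)\big)+\textstyle\sum_{l=1}^{s-q}\log(1+\lambda_l(GB^{-1}G^\top))=\zs,
\]
and weak duality yields $\zcg\le\zs$.

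The main obstacle is the Woodbury identity turning the complementary matrix $I_p-G(B+G^\top G)^{-1}G^\top$ into the exact inverse of $I_p+GB^{-1}G^\top$; this reciprocal relationship is precisely what makes the complementary bound telescope back to the spectral bound. Accompanying it is the careful bookkeeping that taking reciprocals reverses the eigenvalue ordering, so that the $p-(s-q)$ largest eigenvalues of $\Phi^\top\Phi$ correspond to the $p-(s-q)$ smallest eigenvalues of $I_p+GB^{-1}G^\top$, leaving exactly the top $s-q$ terms in the final sum that reconstitute $\zs$.
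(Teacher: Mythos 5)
Your proposal is correct and takes essentially the same approach as the paper's proof: the same Lagrangian dual \eqref{dualzmc}, the same dual point $\hat\Theta=(\Phi^\top\Phi)^{-1}$, $\hat\omega=\hat\nu=0$, $\hat\tau=1$, and the same use of the Woodbury identity together with Lem.~\ref{spec_ldetAAa}; the paper merely identifies the eigenvalues more directly (using that $\Phi^{-1}\Phi^{-\top}$ and $\Phi^{-\top}\Phi^{-1}=(\Phi\Phi^\top)^{-1}=I_p+GB^{-1}G^\top$ have the same eigenvalues, with no order reversal), while you compose two order-reversing reciprocal maps to reach the same identification. One small verbal slip that does not affect the mathematics: under the paper's convention that $\lambda_i$ denotes the $i$-th \emph{largest} eigenvalue, the indices $i=s-q+1,\dots,p$ in the dual objective pick out the $p-(s-q)$ smallest eigenvalues of $\Theta$, not the largest as your remark states, though your displayed formula and all subsequent computations are consistent and correct.
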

\begin{proof}
    The Lagrangian dual  of 
	 \ref{zmc1} is 
\begin{equation}\label{dualzmc}
\begin{array}{lll}
\ldet(B \! + \!G^\top G ) +&\!\min &- \!\!\!\!\!\!\!\displaystyle\sum\limits_{i =s-q +1}^{p} \!\!\!\!\!\log(\lambda_{i} (\Theta ))   \! +\! \displaystyle\sum_{i\in P}\nu_i + \tau (p\!+\!q\!-\!s) \!-\! (p\!+\!q\!-\!s),\\[7pt]
&\text{s.t.} 
&\diag(\Phi \Theta \Phi^\top)  + \omega - \nu - \tau\mathbf{e} = 0,\\[5pt]
&&\Theta \succ 0,\nu \geq 0, \omega \geq 0.
\end{array}
\end{equation}
It suffices to construct a feasible solution to \eqref{dualzmc} with objective value  equal to $\zs$\,. From the Woodbury-Formula, we have
\begin{align*}
    (I_p + GB^{-1}G^\top)^{-1} &= I_p - G(B + G^\top G)^{-1}G^\top,
\end{align*}
where the matrix on the right-hand side is equal to $\Phi\Phi^\top$. 

Then, we consider $\hat\Theta := (\Phi^\top \Phi)^{-1}$, 
$\hat\omega := 0$, $\hat\nu := 0$ and $\hat\tau := 1$, and we can  verify that $(\hat\Theta,\hat\nu,\hat\omega,\hat\tau)$ is a feasible solution to \eqref{dualzmc} with objective value
\[
\begin{array}{l}
    \ldet(B + G^\top G ) - \textstyle\sum_{i =s-q +1}^{p} \log(\lambda_{i} (\hat\Theta ))   \\
     \, =\zs + \textstyle\sum_{i=s-q+1}^p \log(1+\lambda_i(GB^{-1}G^\top ))  - \textstyle\sum_{i =s-q +1}^{p} \log(\lambda_{i} (\Phi^{-1}\Phi^{-\top} )) \\
     \, =\zs + \textstyle\sum_{i=s-q+1}^p \log(1+\lambda_i(GB^{-1}G^\top ))  - \textstyle\sum_{i =s-q +1}^{p} \log(\lambda_{i} (\Phi^{-\top}\Phi^{-1} )) \\
    \, =\zs + \textstyle\sum_{i=s-q+1}^p \textstyle\log(1+\lambda_i(GB^{-1}G^\top )) - \textstyle\sum_{i =s-q +1}^{p} \log(\lambda_{i} (I_p + G B^{-1} G^\top)) =\zs\,,
    \end{array}
    \]
where the first equation follows from Lem. \ref{spec_ldetAAa}.
\qed
\end{proof}

\begin{theorem}\label{prop:compnatural}
   For $s-q \geq m\geq 1$, we have $\zn \leq \zs$\,. 
\end{theorem}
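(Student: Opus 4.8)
The plan is to reduce the spectral bound to a single determinant via the identity just proved, and then to dominate the natural relaxation by a monotonicity argument in the Loewner order. Concretely, Thm.~\ref{spec_ldetAA} gives, under the hypothesis $s-q\geq m$, that $\zs=\ldet(B+G^\top G)$; so it suffices to show that $\zn\leq\ldet(B+G^\top G)$.

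To do this, I would take an arbitrary $x$ feasible for \ref{natural}, meaning $\mathbf{e}^\top x=s-q$ and $0\leq x_i\leq 1$ for $i\in P$. Since $\mathbf{e}-x\geq 0$, we have $G^\top\Diag(\mathbf{e}-x)G\succeq 0$, equivalently $G^\top\Diag(x)G\preceq G^\top\Diag(\mathbf{e})G=G^\top G$. Adding $B$ to both sides preserves the Loewner order, so
\[
0\prec B+G^\top\Diag(x)G\preceq B+G^\top G,
\]
where positive-definiteness on the left holds because $B=H^\top H\in\mathbb{S}^m_{++}$ and $G^\top\Diag(x)G\succeq 0$ for $x\geq 0$. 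Since $\log\det$ is monotone nondecreasing on $\mathbb{S}^m_{++}$ with respect to the Loewner order, this yields $\ldet(B+G^\top\Diag(x)G)\leq\ldet(B+G^\top G)$. Taking the maximum over all feasible $x$ gives $\zn\leq\ldet(B+G^\top G)=\zs$, as desired.

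The argument is short, and the only real subtlety is that it hinges entirely on the hypothesis $s-q\geq m$, which is exactly what Thm.~\ref{spec_ldetAA} needs in order to collapse $\zs$ to $\ldet(B+G^\top G)$. For smaller $s-q$ the spectral bound truncates the eigenvalue sum after $s-q$ terms and is strictly smaller than $\ldet(B+G^\top G)$, so the same Loewner bound would only give $\zn\leq\ldet(B+G^\top G)$, which need not lie below $\zs$; indeed the preceding discussion warns that $\zn>\zs$ can occur when $s-q=m-1$. Thus I do not expect a hard technical step so much as the need to invoke the preceding theorem at precisely the right place.
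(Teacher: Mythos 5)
Your proof is correct and follows essentially the same route as the paper: both invoke Thm.~\ref{spec_ldetAA} to collapse $\zs$ to $\ldet(B+G^\top G)$, and then bound the objective of \ref{natural} by $\ldet(B+G^\top G)$ via monotonicity of the determinant under addition of a positive-semidefinite matrix (the paper cites \cite[Lem.~1.2]{SchurBook} for exactly the Loewner-order fact you use). Your explicit remark on why the argument fails for $s-q=m-1$ is a nice touch but not part of the paper's proof.
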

\begin{proof}
By considering Thm. \ref{spec_ldetAA}, it suffices to show that there is no feasible solution to \ref{natural} with objective value greater than  $\ldet(A^\top A)$. Let $\hat{x}$ be a  feasible solution  to \ref{natural}\,, then
\[
\det(A^\top \diag(\hat x) A  )
\leq \det(A^\top \diag(\hat x) A  + A^\top \diag(\mathbf{e}-\hat x) A  )=\det(A^\top A).
\]
For the inequality above, see \cite[Lem. 1.2]{SchurBook}. \qed 
\end{proof}

It is natural to wonder whether the condition $s-q\geq m$ in Thm. \ref{prop:compnatural} is necessary. Next, we demonstrate that the conclusion of Thm. \ref{prop:compnatural}  does not generally hold when $s-q= m-1$. 

\begin{theorem}
    For $s-q = m-1$, $m\geq 2$, we can have $\zn> \zs$\,.
\end{theorem}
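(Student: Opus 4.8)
The plan is to prove the statement by exhibiting an explicit instance, since ``we can have $\zn > \zs$'' is an existence claim and a single counterexample suffices to show that the hypothesis $s-q\geq m$ in Thm.~\ref{prop:compnatural} cannot be relaxed to $s-q=m-1$. The smallest interesting case is $m=2$, $s-q=1$, which I would treat directly.

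First I would record what $\zs$ looks like in this regime. Because $A$ has full column rank, $GB^{-1}G^\top$ has exactly $m$ positive eigenvalues $\lambda_1\geq\cdots\geq\lambda_m>0$, with the remaining $p-m$ equal to zero. Hence, by Lem.~\ref{spec_ldetAAa}, for $s-q=m-1$ the tail $\sum_{i=m}^{p}\log\bigl(1+\lambda_i(GB^{-1}G^\top)\bigr)$ collapses to the single term $\log(1+\lambda_m)$, so that $\zs = \ldet(B+G^\top G) - \log\bigl(1+\lambda_m(GB^{-1}G^\top)\bigr)$. In words, the spectral bound discards the contribution of the smallest positive eigenvalue, and the task reduces to finding data for which the natural bound's fractional optimum recovers more than this.

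Then I would take $B=I_2$ (e.g.\ $H=I_2$, so $q=2$), $p=2$, $s=3$, and $G=\left(\begin{smallmatrix}\sqrt3&0\\0&1\end{smallmatrix}\right)$, whose rows $g_1^\top,g_2^\top$ satisfy $0<s-q=1<p=2$ and $m\leq s\leq n$. Here $GB^{-1}G^\top=GG^\top$ has eigenvalues $3$ and $1$, so directly from the definition $\zs=\ldet(I_2)+\log(1+3)=\log 4$. For the natural bound, the feasible set $\{x\in[0,1]^2:\mathbf{e}^\top x=1\}$ is the segment $x=(t,1-t)$, $t\in[0,1]$, and $B+G^\top\Diag(x)G=\Diag(1+3t,\,2-t)$, giving objective $f(t)=(1+3t)(2-t)=2+5t-3t^2$. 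This concave function is maximized at $t^\star=5/6\in[0,1]$ with $f(5/6)=49/12$, so $\zn=\log(49/12)$.

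The conclusion is then immediate, since $\zn=\log(49/12)\approx 1.402 > \log 4 \approx 1.386 = \zs$. The only point requiring care is the calibration of the numerical data: the two singular values of $G$ must be spread so that the interior maximizer $t^\star$ of $f$ lands inside $[0,1]$ and the resulting determinant strictly exceeds $1+\lambda_1$. I expect this tuning, rather than any conceptual difficulty, to be the sole delicate step, and the example above already discharges it; one could extend it to general $m\geq 2$ by padding $G$ and $B$ with an identity block, but this is unnecessary for the stated claim.
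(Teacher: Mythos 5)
Your proof is correct, but it takes a genuinely different route from the paper's. The paper constructs a single symmetric family covering \emph{every} $m\geq 2$ at once: it takes $p=m$, $B=G=I_m$, $s-q=m-1$, for which $\zs=(m-1)\log 2$ and, by symmetry and concavity, the natural relaxation is optimized at $x_i^\star=1-1/m$, giving $\zn=m\log(2-1/m)$; the inequality $m\log(2-1/m)>(m-1)\log 2$ is then established by a calculus/monotonicity argument (checking small cases and showing the relevant ratio is increasing in $m$). You instead exhibit one explicit asymmetric instance at $m=2$: with $B=I_2$, $G=\Diag(\sqrt3,1)$, $p=2$, $s-q=1$, your computations check out --- $\zs=\log 4$, and on the segment $x=(t,1-t)$ the objective $(1+3t)(2-t)$ peaks at $t^\star=5/6$ with value $49/12>4$, so $\zn=\log(49/12)>\log 4=\zs$. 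What your approach buys is elementarity: a one-variable quadratic maximization replaces the paper's asymptotic/derivative analysis, and the arithmetic is fully explicit. What the paper's approach buys is scope: read strictly, the theorem asserts failure of the dominance for each $m\geq 2$ with $s-q=m-1$, and the paper's family delivers that uniformly, whereas your example settles only $m=2$ and defers generality to the padding remark. That remark is in fact easy to discharge (pad $G$ and $B$ to $\Diag(\sqrt3,1,\dots,1)$ and $I_m$ with $p=m$, keep $(x_1,x_2)=(5/6,1/6)$, and set the $m-2$ padded coordinates to $1$, so the feasible objective value $(49/12)\,2^{m-2}$ strictly exceeds $2^m=e^{\zs}$), but as written it is a sketch rather than a proof, so you should include that two-line verification if the full range of $m$ is intended.
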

\begin{proof}
Let $p=m$, $B=G=I_m$, $s-q=m-1$. In this case, we have 
\[
\zs = (m-1)\log(2) \qquad \mbox{ and }\qquad 
\zn =
\left(2 - 1/m\right)^m,
\]
the latter of which holds because the symmetry of  this instance implies that \ref{natural} has the symmetric solution $x^*_i=1-1/m$ for all $i\in P$.

We want to show that $\zn> \zs$\,, or
$
f(m):=\left(2 - 1/m\right)^m/(m-1) > \log(2)
$.
It is easy to check that $f(3)>f(2)>\log(2)$, so it suffices to show that the $f$ is increasing in $m$ for $m\geq 3$. This is equivalent to prove that  the $\log(f)$ is increasing in $m$. We have
\[
\frac{d(\log(f))}{dm}=\frac{-m+\left(2m^2-3m+1\right)\log\left(2-1/m\right)}{(m-1)(2m-1)}.
\]
The denominator above is clearly positive and the numerator is bounded below by the convex quadratic
$
-m+\left(2m^2-3m+1\right)/2,
$
which has roots $\frac{1}{4}(5\pm\sqrt{17})$. Both roots are smaller than $3$, and therefore $f$ is increasing for $m\geq 3$. \qed
\end{proof}

\begin{theorem}\label{hada_optimal_s1}
 For  $s -q = 1$, we have $\zh = \zfu$, where $\zfu$ is the optimal solution value  of \ref{datafusion}.   
\end{theorem}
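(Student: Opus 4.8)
The plan is to reduce both sides to an explicit single-index maximization and check they coincide. The driving observation is that when $s-q=1$, the feasible region of \ref{datafusion} is exactly the set of unit vectors $\mathbf{e}_i$, $i\in P$, so choosing a single experiment amounts to a \emph{single} rank-one update of $B$; and for a single rank-one update the determinant is computed \emph{exactly}, which is precisely what forces the generally-loose bound $\zh$ to be tight.

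First I would write out the reduced problem. For $x\in\{0,1\}^p$ with $\mathbf{e}^\top x = 1$ we have $x=\mathbf{e}_i$ for some $i\in P$, and then $G^\top\Diag(\mathbf{e}_i)G = G_{i\cdot}^\top G_{i\cdot}$, so
\[
\zfu \;=\; \max_{i\in P}\, \ldet\!\left(B + G_{i\cdot}^\top G_{i\cdot}\right).
\]
Next I would apply the matrix determinant lemma to each rank-one update, which holds with \emph{equality}:
\[
\ldet\!\left(B + G_{i\cdot}^\top G_{i\cdot}\right) \;=\; \ldet(B) + \log\!\left(1 + G_{i\cdot} B^{-1} G_{i\cdot}^\top\right).
\]

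Then I would translate the quadratic form into the quantity appearing in \ref{hadamard}. Using the Cholesky factorization $B=LL^\top$, so that $B^{-1}=L^{-\top}L^{-1}$, we obtain
\[
G_{i\cdot} B^{-1} G_{i\cdot}^\top \;=\; \left\|G_{i\cdot} L^{-\top}\right\|_2^2 \;=\; \rho_i^2\!\left(GL^{-\top}\right).
\]
Since $\log(1+\cdot)$ is strictly increasing, the maximum over $i\in P$ is attained at the index maximizing $\rho_i$, which by the defining ordering of the bijection $\tau$ is $\tau(1)$. Substituting yields
\[
\zfu \;=\; \ldet(B) + \log\!\left(1 + \rho_{\tau(1)}^2\!\left(GL^{-\top}\right)\right),
\]
which is exactly the $s-q=1$ instance of the defining formula for $\zh$, establishing $\zh=\zfu$.

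I do not anticipate a genuine technical obstacle here; the argument is a short chain of exact identities, and strict monotonicity of $\log(1+\cdot)$ dispenses with any worry about ties in the maximization. The real content is conceptual: the Hadamard bound is built by applying an inequality row-by-row, which over-counts in general, but when only a single row is selected there is exactly one rank-one term and the matrix determinant lemma supplies the missing equality, so no slack is introduced.
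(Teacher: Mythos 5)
Your proof is correct and takes essentially the same approach as the paper's: both reduce $\zfu$ to a maximization over the $p$ rank-one updates $B+G_{i\cdot}^\top G_{i\cdot}$\,, evaluate each update exactly, and identify $G_{i\cdot}B^{-1}G_{i\cdot}^\top=\rho_i^2\left(GL^{-\top}\right)$ via the Cholesky factor $B=LL^\top$. The paper reaches the same identity by first factoring $B+G_{i\cdot}^\top G_{i\cdot}=L\left(I+\Upsilon_{i\cdot}^\top\Upsilon_{i\cdot}\right)L^\top$ with $\Upsilon_{i\cdot}:=G_{i\cdot}L^{-\top}$ and then swapping $\ldet\left(I+\Upsilon_{i\cdot}^\top\Upsilon_{i\cdot}\right)$ for $\log\left(1+\Upsilon_{i\cdot}\Upsilon_{i\cdot}^\top\right)$, which is exactly the matrix-determinant lemma you invoke directly.
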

\begin{proof}
Let $\Upsilon_{i\cdot} := G_{i\cdot}L^{-\top}$, for $i \in P$. For $s -q= 1$, we have
\[\zh =  \ldet(B) +\textstyle\max_{ i\in P} \log\left(1 +  \|\Upsilon_{i\cdot}\|_2^2\right),\]
and
\[\zfu =  \textstyle\max_{ i\in P} \ldet \left(B + G^\top_{i\cdot} G_{i\cdot} \right).\]
We have
\begin{align*}
   &B +  G^\top_{i\cdot} G_{i\cdot} ~=~ L L^\top+   G^\top_{i\cdot} G_{i\cdot}
    ~=~ L L^\top + \left(L L^{-1}\right) G^\top_{i\cdot} G_{i\cdot}\left(L^{-\top} L^\top\right)\\
    &\quad= L L^\top + L \Upsilon_{i\cdot}^\top  \Upsilon_{i\cdot} L^\top
    ~=~ L \left(I + \Upsilon^\top_{i\cdot} \Upsilon_{i\cdot}\right)L^\top,
\end{align*}
so 
\begin{align*}
    &\zfu 
    ~=~\textstyle\max_{ i\in P} \ldet \left(  L \left(I + \Upsilon_{i\cdot}^\top \Upsilon_{i\cdot}\right)L^\top \right)
    ~=~ \textstyle\max_{ i\in P} \ldet \left(\left(  L L^\top\right)\cdot \left(I + \Upsilon_{i\cdot}^\top \Upsilon_{i\cdot}\right)\right) \\
     &\quad =~ \ldet(B) + \textstyle\max_{ i\in P}\ldet\left(I + \Upsilon_{i\cdot}^\top \Upsilon_{i\cdot}\right)
    ~=~ \ldet(B) + \textstyle\max_{ i\in P}\log\left(1 + \Upsilon_{i\cdot} \Upsilon_{i\cdot}^\top\right)\\
    &\quad =~ \ldet(B) + \textstyle\max_{ i\in P}\log\left(1 + \|\Upsilon_{i\cdot} \|_2^2\right) ~=~ \zh\,.
\end{align*}

\vspace{-17pt}
\qed
\end{proof}

\begin{lemma}\label{lem:diagelements} For $s-q\geq 1$, we have
    \[    \zh = \ldet({B}) + \sum_{i=1}^{{s-q}} \log\left( d_i\left( I_p +  G B^{-1}G^\top\right)\right).
    \]
    
\end{lemma}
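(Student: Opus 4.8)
The plan is to prove the identity by unfolding the definition of $\zh$ and matching it term-by-term against the claimed expression, the only real content being a diagonal computation together with a sorting argument. Recall that $\zh = \ldet(B) + \sum_{i=1}^{s-q}\log\bigl(1+\rho_{\tau(i)}^2(GL^{-\top})\bigr)$, where $LL^\top$ is the Cholesky factorization of $B$, $\rho_i(GL^{-\top}) = \|G_{i\cdot}L^{-\top}\|_2$, and $\tau$ lists the indices so that the $\rho_{\tau(i)}$ are nonincreasing in $i$. Since the $\ldet(B)$ summand already matches on both sides, it suffices to show that, for each $i$, the $i$-th summand of $\zh$ equals $\log\bigl(d_i(I_p+GB^{-1}G^\top)\bigr)$.

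The first key step is to identify the diagonal entries of $I_p+GB^{-1}G^\top$. Using $B=LL^\top$, and hence $B^{-1}=L^{-\top}L^{-1}$, I would compute, for each $i\in P$,
\[
(GB^{-1}G^\top)_{ii} = G_{i\cdot}L^{-\top}L^{-1}G_{i\cdot}^\top = \|G_{i\cdot}L^{-\top}\|_2^2 = \rho_i^2(GL^{-\top}),
\]
so that $(I_p+GB^{-1}G^\top)_{ii} = 1+\rho_i^2(GL^{-\top})$. Thus the multiset of diagonal entries of $I_p+GB^{-1}G^\top$ is exactly $\{1+\rho_i^2(GL^{-\top}) : i\in P\}$.

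The second step is purely a matter of sorting. By definition, $d_i(\cdot)$ is the $i$-th \emph{largest} diagonal entry, and by the choice of the bijection $\tau$ the quantities $1+\rho_{\tau(i)}^2(GL^{-\top})$ are already arranged in nonincreasing order in $i$. Hence $d_i(I_p+GB^{-1}G^\top) = 1+\rho_{\tau(i)}^2(GL^{-\top})$ for every $i$, and summing $\log(\cdot)$ over $i=1,\dots,s-q$ gives precisely the sum appearing in $\zh$. Adding back $\ldet(B)$ yields the claimed identity.

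I do not anticipate a genuine obstacle here: the argument is essentially bookkeeping. The one step carrying any content is the Cholesky identification $(GB^{-1}G^\top)_{ii}=\rho_i^2(GL^{-\top})$, which is where the definition of $\rho_i$ in terms of $L^{-\top}$ is used; everything else is the observation that $d_i$ reorders these diagonal entries in exactly the way $\tau$ does. The only small care needed is to confirm that the top $s-q$ values selected by $d_1,\dots,d_{s-q}$ coincide with those indexed by $\tau(1),\dots,\tau(s-q)$, which is immediate from the monotonicity of $x\mapsto 1+x$ (so ordering by $\rho_{\tau(i)}^2$ is the same as ordering by $1+\rho_{\tau(i)}^2$).
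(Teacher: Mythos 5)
Your proof is correct and follows essentially the same route as the paper's: both unfold the definition of $\zh$, identify the diagonal entries of $GB^{-1}G^\top$ with the squared row norms $\rho_i^2(GL^{-\top})$ via $B^{-1}=L^{-\top}L^{-1}$, and use the ordering given by $\tau$ to conclude $1+\rho_{\tau(i)}^2(GL^{-\top}) = d_i\left(I_p+GB^{-1}G^\top\right)$. Your write-up merely makes the sorting/monotonicity step more explicit than the paper does.
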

\begin{proof}
We have
    \begin{align*}
        \zh &= \ldet(B) + \textstyle\sum_{i=1}^{{s-q}} \log\left(1 +  \rho_{\tau(i)}^2\left( G L^{-\top}\right)\right)\\
        &= \ldet(B) + \textstyle\sum_{i=1}^{{s-q}} \log\left(1 + d_i\left( (G L^{-\top})(G L^{-\top})^{\top}\right)\right)\\
        &= \ldet({B}) + \textstyle\sum_{i=1}^{{s-q}} \log\left( d_i\left( I_p + G B^{-1}G^\top\right)\right).  
    \end{align*}

    \vspace{-17pt}
    \qed
\end{proof}

\begin{theorem}\label{lem:general_inequality_spec_hada}
For  $s-q\geq 1$, we have $\zs - \zh \leq  \!\!\!\!\displaystyle\sum\limits_{i =s-q +1}^{p} \!\!\!\!\!\log \left(\frac{d_i\left(I_p + G B^{-1}G^\top\right)}{\lambda_i\left(I_p + G B^{-1}G^\top\right)}\right)$.
\end{theorem}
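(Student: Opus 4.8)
The plan is to compare the two bounds term-by-term using the closed forms already established in the excerpt. From Lem.~\ref{spec_ldetAAa} and the preceding lemma, I would start by writing both bounds relative to the common matrix $M:=I_p + G B^{-1}G^\top$. The spectral bound satisfies $\zs = \ldet(B) + \sum_{i=1}^{s-q}\log\lambda_i(M)$, since $\log(1+\lambda_i(GB^{-1}G^\top)) = \log\lambda_i(I_p + GB^{-1}G^\top)$; and the previous lemma gives $\zh = \ldet(B) + \sum_{i=1}^{s-q}\log d_i(M)$. Subtracting, the $\ldet(B)$ terms cancel and I obtain
\[
\zs - \zh = \textstyle\sum_{i=1}^{s-q}\log\lambda_i(M) - \sum_{i=1}^{s-q}\log d_i(M).
\]

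Next I would exploit the fact that $M$ is positive-definite, so $\Trace(M)=\sum_{i=1}^p \lambda_i(M) = \sum_{i=1}^p d_i(M)$; equivalently $\sum_{i=1}^p \log$ is not what matches, but the relevant identity is that the full products agree: $\ldet(M)=\sum_{i=1}^p \log\lambda_i(M)$, whereas $\sum_{i=1}^p \log d_i(M)$ is generally smaller (Hadamard's inequality). The key structural fact I need is the majorization relation between the ordered eigenvalues and ordered diagonal entries of a symmetric matrix: by the Schur--Horn theorem, the vector $(\lambda_i(M))$ majorizes $(d_i(M))$, and in particular $\sum_{i=1}^{k}\lambda_i(M) \geq \sum_{i=1}^{k} d_i(M)$ for every partial sum $k$, with equality at $k=p$. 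I would convert the claimed inequality into a statement about the \emph{complementary} partial sums: adding $\sum_{i=s-q+1}^p\log\lambda_i(M)$ to both sides, the target inequality
\[
\textstyle\sum_{i=1}^{s-q}\bigl(\log\lambda_i(M)-\log d_i(M)\bigr) \leq \sum_{i=s-q+1}^p\bigl(\log d_i(M) - \log\lambda_i(M)\bigr)
\]
rearranges to $\sum_{i=1}^{p}\log\lambda_i(M) \leq \sum_{i=1}^{p}\log d_i(M)$, which is false in general; so I must be careful. The correct rearrangement is that the claimed bound is equivalent to $\sum_{i=1}^{s-q}\log d_i(M) + \sum_{i=s-q+1}^p\log\lambda_i(M) \geq \ldet(M)$, i.e. a mixed sum dominates the log-determinant.

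The cleanest route, then, is to prove the equivalent inequality $\ldet(M)\leq \sum_{i=1}^{s-q}\log d_i(M)+\sum_{i=s-q+1}^p\log\lambda_i(M)$ directly. I would establish this by a Fischer/Schur-complement argument: partition the index set so that the $s-q$ largest diagonal entries are separated out, and use the fact that for a positive-definite matrix $\ldet(M)\leq \ldet(M_{JJ}) + \sum_{i\in J^c}\log\lambda_i(\text{appropriate Schur complement})$, combined with Hadamard on the principal block indexed by the large diagonals. Alternatively, and more in the spirit of the paper, I would invoke a known Ky~Fan / majorization inequality: for any symmetric positive-definite $M$ and any $k$,
\[
\textstyle\prod_{i=1}^{k} d_i(M)\cdot\prod_{i=k+1}^{p}\lambda_i(M)\geq \det(M),
\]
which follows from the Schur--Horn majorization applied to the $\log$-transformed spectrum together with a rearrangement lemma. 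Taking logs with $k=s-q$ yields exactly the needed bound. The main obstacle is pinning down precisely which majorization inequality delivers the mixed product bound and verifying the ordering hypotheses (that $d_i$ and $\lambda_i$ are both sorted decreasingly, and that the cross-term inequality survives the split at index $s-q$); once that combinatorial/majorization lemma is in hand, the rest is the routine term cancellation described above.
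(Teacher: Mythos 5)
Your reduction to closed forms is exactly right, and in fact your \emph{first} rearrangement already finishes the proof --- but you then talk yourself out of it because you have Hadamard's inequality backwards. For the positive-definite matrix $M := I_p + GB^{-1}G^\top$, Hadamard's inequality states $\det(M) \leq \prod_{i=1}^p M_{ii}$, i.e. $\sum_{i=1}^p \log\lambda_i(M) \leq \sum_{i=1}^p \log d_i(M)$: the diagonal product is the \emph{larger} quantity, not ``generally smaller'' as you assert. Consequently the inequality you dismiss as ``false in general'' is precisely Hadamard's inequality, it is true, and your computation showing that the claimed bound rearranges to it is already a complete proof. This is exactly the paper's argument: apply Hadamard's inequality to $I_p + GB^{-1}G^\top$, take logs, add $\ldet(B)$ to both sides, split each sum at index $s-q$ using the closed forms $\zs = \ldet(B) + \sum_{i=1}^{s-q}\log\lambda_i(M)$ and $\zh = \ldet(B) + \sum_{i=1}^{s-q}\log d_i(M)$, and move the two tail sums to the appropriate sides.

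Everything after that point compounds the error. First, your ``correct rearrangement'' is not equivalent to the claimed bound: cancelling the common tail $\sum_{i=s-q+1}^p\log\lambda_i(M)$ from both sides of $\sum_{i=1}^{s-q}\log d_i(M) + \sum_{i=s-q+1}^p \log\lambda_i(M) \geq \ldet(M)$ leaves $\sum_{i=1}^{s-q}\log d_i(M) \geq \sum_{i=1}^{s-q}\log\lambda_i(M)$, which is a different (and generally false) assertion; the true equivalent form is $\ldet(M)\leq \sum_{i=1}^p \log d_i(M)$. Second, the ``Ky Fan / majorization'' lemma you plan to invoke, $\prod_{i=1}^{k} d_i(M)\cdot\prod_{i=k+1}^{p}\lambda_i(M)\geq \det(M)$, is false: dividing by $\prod_{i=k+1}^p \lambda_i(M)$ it says $\prod_{i=1}^{k} d_i(M)\geq \prod_{i=1}^{k}\lambda_i(M)$, which already fails at $k=1$ because $\lambda_1(M)\geq d_1(M)$ for every symmetric $M$ (e.g., with unit diagonal and off-diagonal entries $1/2$ in the $2\times 2$ case, $d_1\lambda_2 = 1/2 < 3/4 = \det(M)$). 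The Schur--Horn majorization you cite runs in the opposite direction for top-$k$ partial sums and cannot rescue this step. So: delete everything after your first rearrangement, correct the direction of Hadamard's inequality, and your proof coincides with the paper's.
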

\begin{proof}
    From Hadamard's inequality (see \cite[Thm. 7.8.6]{HJBook} for the more general Oppenheim's inequality), we have
    \[
    \begin{array}{ll}
        &\textstyle \prod_{i\in P} \lambda_i\!\left(I_p \!+\! G B^{-1}G^\top\right) \leq \prod_{i\in P} d_i\!\left(I_p \!+\! G B^{-1}G^\top\right)\Rightarrow\\[3pt]
        &\textstyle\sum_{i\in P}\log \left(\lambda_i\!\left(I_p \!+\! G B^{-1}G^\top\right)\!\right) \leq \sum_{i\in P}\log \left(d_i\!\left(I_p \!+\! G B^{-1}G^\top\right)\!\right)\Rightarrow\\[3pt]
        &\textstyle\ldet(B) \!+\!\sum_{i\in P}\log \!\left(\lambda_i\!\left(I_p \!+\! G B^{-1}G^\top\right)\!\right) \!\leq \ldet(B) \!+\!\! \sum_{i\in P}\log\! \left(d_i\!\left(I_p \!+\! G B^{-1}G^\top\right)\!\right)\Rightarrow\\[3pt]
        &\textstyle\zs + \!\sum_{i =s-q +1}^{p}  \!\log \left(\lambda_i\!\left(I_p \!+\! G B^{-1}G^\top\right)\!\right) \leq \zh + \sum_{i =s-q +1}^{p} \log \left(d_i\!\left(I_p \!+\! G B^{-1}G^\top\right)\!\right).
    \end{array}
    \]
Then, the result follows.  \qed
\end{proof}

\begin{corollary}\label{cor:valid_inequality_hadamard}
   For $s-q\geq m\geq 1$, we have 
   \[
   \zs - \zh \leq \textstyle\sum_{i =s-q +1}^{p} \log \left(d_i\!\left(I_p \!+\! G B^{-1}G^\top\right)\right).
   \]
\end{corollary}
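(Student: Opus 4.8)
The plan is to derive this directly from Theorem \ref{lem:general_inequality_spec_hada}, whose right-hand side is
\[
\textstyle\sum_{i=s-q+1}^{p}\log\left(\frac{d_i\left(I_p+GB^{-1}G^\top\right)}{\lambda_i\left(I_p+GB^{-1}G^\top\right)}\right).
\]
The entire point of the extra hypothesis $s-q\geq m$ is to kill the denominator terms, so the first thing I would do is pin down the spectrum of $I_p+GB^{-1}G^\top$.

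The key observation is the rank fact already used in the proof of Thm. \ref{spec_ldetAA}: since $A$ (hence $G$) has full column rank and $B\in\mathbb{S}^m_{++}$, we have $\rank\left(GB^{-1}G^\top\right)=m$. Consequently $GB^{-1}G^\top$ has exactly $p-m$ zero eigenvalues, and therefore $I_p+GB^{-1}G^\top$ has eigenvalues $\lambda_i\left(I_p+GB^{-1}G^\top\right)=1+\lambda_i\left(GB^{-1}G^\top\right)$, which equal $1$ for all $i>m$.

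Next I would combine this with the index range. Because $s-q\geq m$, every index appearing in the sum satisfies $i\geq s-q+1>m$, so $\lambda_i\left(I_p+GB^{-1}G^\top\right)=1$ for each such $i$. Hence $\log\left(\lambda_i\left(I_p+GB^{-1}G^\top\right)\right)=0$ throughout the range, the denominator contributes nothing, and the bound of Thm. \ref{lem:general_inequality_spec_hada} collapses exactly to
\[
\zs-\zh\leq\textstyle\sum_{i=s-q+1}^{p}\log\left(d_i\left(I_p+GB^{-1}G^\top\right)\right),
\]
which is the claim. I do not anticipate any genuine obstacle here; this is a clean specialization, and the only thing to verify carefully is that the rank-$m$ conclusion forces the relevant eigenvalues to be exactly $1$ (rather than merely bounded), which is immediate from the additive shift by $I_p$.
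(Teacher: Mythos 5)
Your proposal is correct and follows essentially the same route as the paper: invoke the rank fact $\rank\left(GB^{-1}G^\top\right)\leq m$ to conclude $\lambda_i\left(I_p+GB^{-1}G^\top\right)=1$ for $i>m$, and then observe that under $s-q\geq m$ every denominator term in Thm.~\ref{lem:general_inequality_spec_hada} has logarithm zero. One tiny remark: your parenthetical ``$A$ (hence $G$) has full column rank'' is not a valid implication, but it is also not needed --- since $GB^{-1}G^\top$ is positive semidefinite of rank at most $m$, the eigenvalues $\lambda_i$ with $i>m$ vanish regardless, which is all the argument requires.
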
 
\begin{proof}
    As $\rank(GB^{-1}G^\top) = m$, we have $\lambda_i(I_p \!+\! GB^{-1}G^\top) = 1$ for $i = \{m+1,\dots,p\}$. 
    Then, the result 
    follows  from Thm. \ref{lem:general_inequality_spec_hada}.  \qed
\end{proof}

\begin{theorem}\label{thm:difgamahadamard}
For  $s-q\geq 1$, we have   
\[
\zg - \zh \leq \!\displaystyle\sum\limits_{i =2}^{s-q} \log \left(\frac{d_{1}\left(I_p + G B^{-1}G^\top\right)}{d_i\left(I_p + G B^{-1}G^\top\right)}\right).
\]
\end{theorem}

\begin{proof}
From Lem. \ref{lem:diagelements}, we see that it 
suffices to construct a feasible solution to \eqref{dualzm1a} with objective value  equal to $\ldet(B) + \sum_{i = 1}^{s-q}\log(d_{1}\left(I_p + G B^{-1}G^\top\right))$\,. 
We let $\hat{d}_1 := d_{1}\left(I_p + G B^{-1}G^\top\right)$, and we consider  $\hat\Theta := \frac{1}{\hat{d}_1}I$, $  \hat\omega :=   \mathbf{e} - \frac{1}{\hat{d}_1}\diag(\Psi \Psi^\top)$, $\hat\nu := 0$ and $\hat\tau := 1$. Recalling that  $\Psi \Psi^\top$ is the Cholesky factorization of $I_p + G B^{-1} G^\top$ (see \ref{zm1a}),  we see that $\hat\omega\geq 0$. Thus, we can verify that $(\hat\Theta,\hat\omega,\hat\nu,\hat\tau)$ is a feasible solution to \eqref{dualzm1a} with objective value  
\vspace{-15pt}

\begin{align*}
   & \ldet(B) - \!\!\!\!\!\sum_{i =p-(s-q) +1}^{p}\!\!\!\!\! \log(\lambda_{i} (\hat\Theta ))  ~=~  \ldet({B}) +  \sum_{i=1}^{s-q} \log \left(d_{1}\left(I_p + G B^{-1}G^\top\right)\right).  
\end{align*}

\vspace{-20pt}
\qed
\end{proof}

\medskip

 We show in Thm. \ref{hada_optimal_s1} that when $s-q = 1$, we have $\zh = \zfu$. 
 In \cite{li2022d} they show that when $s-q = 1$, we also have $\zg= \zfu$.  
 In Thm. \ref{thm:difgamahadamard} we establish a bound on the difference between $\zg$ and $\zh$. In our numerical experiments,  for $s = 1$, we have $\zg = \zh$\,,  and for $s > 1$, we  always have $\zg < \zh$\,. So, we also have the following conjecture. 

\begin{jconjecture}
For  $s-q\geq 1$, we have  $\zg \leq \zh$\,. 
\end{jconjecture}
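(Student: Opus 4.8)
The plan is to reduce the inequality to a statement about a single matrix and then exploit that $\phi_t$ (with $t:=s-q$) is Schur-concave. First I would rewrite both bounds in terms of $M:=I_p+GB^{-1}G^\top$. As in the proof of Thm.~\ref{dominance_zm_speca}, the factor $\Psi$ appearing in \ref{zm1a} may be taken with $\Psi\Psi^\top=M$, so the $i$-th row $\psi_i^\top$ of $\Psi$ satisfies $\|\psi_i\|_2^2=M_{ii}$, and for every feasible $z$,
\[
\Gamma_t\Big(\textstyle\sum_{i\in P}z_i\psi_i\psi_i^\top\Big)=\phi_t\big(\lambda(\Psi^\top\Diag(z)\Psi)\big)=\phi_t\big(\lambda(N_z)\big),\quad N_z:=\Diag(z)^{1/2}M\Diag(z)^{1/2},
\]
since $\Psi^\top\Diag(z)\Psi$ and $N_z$ share the same spectrum. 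Hence $\zg=\ldet(B)+\max\{\phi_t(\lambda(N_z)):\mathbf{e}^\top z=t,\ 0\le z\le\mathbf{e}\}$, while the Lemma preceding Thm.~\ref{lem:general_inequality_spec_hada} gives $\zh=\ldet(B)+\sum_{i=1}^t\log d_i(M)$, where $d_i(M)$ is the $i$-th largest among $M_{11},\dots,M_{pp}=\|\psi_1\|_2^2,\dots,\|\psi_p\|_2^2$.

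Next I would discard the off-diagonal part. By Prop.~\ref{prop:gammaconc} the spectral function $\Gamma_t=\phi_t\circ\lambda$ is concave on $\mathbb{S}^p_+$, which forces $\phi_t$ to be symmetric and concave on $\mathbb{R}^p_+$, hence Schur-concave. Since the eigenvalue vector of a symmetric matrix majorizes its diagonal (Schur--Horn), $\diag(N_z)\prec\lambda(N_z)$, and Schur-concavity yields $\phi_t(\lambda(N_z))\le\phi_t(\diag(N_z))=\phi_t\big((z_iM_{ii})_{i\in P}\big)$. This reduces the claim to the purely diagonal inequality
\[
\max\big\{\phi_t\big((z_i a_i)_{i\in P}\big):\mathbf{e}^\top z=t,\ 0\le z\le\mathbf{e}\big\}\ \le\ \textstyle\sum_{i=1}^t\log a_{[i]},
\]
where $a_i:=M_{ii}>0$ and $a_{[1]}\ge\cdots\ge a_{[p]}$ is the decreasing rearrangement, so the right-hand side is exactly $\zh-\ldet(B)$.

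For the diagonal problem I would relabel so that $a_1\ge\cdots\ge a_p$ and show the concave objective $g(z):=\phi_t((z_ia_i)_i)$ is maximized at the vertex $z^\star:=\sum_{i=1}^t\mathbf{e}_i$. There $\mu^\star=(a_1,\dots,a_t,0,\dots,0)$ has exactly $t$ positive entries, so by Thm.~\ref{cor:propphi} we get $g(z^\star)=\phi_t(\mu^\star)=\sum_{i=1}^t\log a_i$, the target value. To certify optimality I would take the supergradient of $\phi_t$ at $\mu^\star$ given by the formula in \eqref{betaepsilon} with $\epsilon=0$ (whose relevant average equals $a_t$ by Lem.~\ref{lem:iota_t}), namely $s^\star_\ell=1/a_\ell$ for $\ell\le t$ and $s^\star_\ell=1/a_t$ for $\ell>t$; by the chain rule a supergradient of $g$ at $z^\star$ is $\Diag(a)\,s^\star$, with entries $1$ for $i\le t$ and $a_j/a_t\le 1$ for $j>t$. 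Then for every feasible $z$,
\[
\langle \Diag(a)s^\star,\, z-z^\star\rangle=\textstyle\sum_{i\le t}z_i+\sum_{j>t}(a_j/a_t)z_j-t\ \le\ \mathbf{e}^\top z-t=0,
\]
so concavity gives $g(z)\le g(z^\star)$, completing the chain $\zg-\ldet(B)\le\max_z g(z)=\sum_{i=1}^t\log a_i=\zh-\ldet(B)$.

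The main obstacle is the last step: one must prove that $s^\star$ is a genuine supergradient of $\phi_t$ at the degenerate point $\mu^\star$ (with $p-t$ vanishing coordinates and possible ties among the $a_i$), i.e.\ that $\phi_t(\mu)\le\phi_t(\mu^\star)+\langle s^\star,\mu-\mu^\star\rangle$ holds globally for $\mu\ge 0$, not merely to first order. This is exactly the Fenchel/complementarity content behind \eqref{betaepsilon}--\eqref{eq:g_theta2}: $s^\star$ is the spectrum of the optimal dual matrix $\hat\Theta$, and the identity $-\sum_{\ell\le t}\log\hat\beta_\ell=\phi_t(\mu^\star)$ established there is the zero-duality-gap statement certifying $s^\star\in\partial\phi_t(\mu^\star)$. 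I would therefore formalize the final step by mirroring Thm.~\ref{dominance_zm_speca}: construct a feasible point of the Lagrangian dual \eqref{eq:dual_gamma_dopt_01} whose $\Theta$-block is built from $s^\star$ and whose objective value equals $\zh$, treating ties by a limiting argument.
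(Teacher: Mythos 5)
First, the essential point of comparison: the paper contains \emph{no proof} of this statement --- it is posed as a conjecture, supported only by the worked examples and the numerical observation that $\zg=\zh$ at $s-q=1$ and $\zg<\zh$ otherwise. So your attempt cannot be measured against a paper proof; it must stand on its own, and on its own terms it holds up remarkably well. The reduction $\zg=\ldet(B)+\max_z\phi_t(\lambda(N_z))$ with $N_z=\Diag(z)^{1/2}M\Diag(z)^{1/2}$ and $M=I_p+GB^{-1}G^\top$ is correct and consistent with how $\Psi$ is used in the proof of Thm.~\ref{dominance_zm_speca}; the Schur-concavity of $\phi_t$ (symmetry plus concavity of the diagonal restriction of $\Gamma_t$, Prop.~\ref{prop:gammaconc}), Schur's majorization $\diag(N_z)\prec\lambda(N_z)$ (traces agree, so the majorization is genuine), and the identification of the target value via Thm.~\ref{cor:propphi} and the unnumbered lemma giving $\zh=\ldet(B)+\sum_{i=1}^{t}\log d_i(M)$ are all sound, as is the vertex argument once the supergradient is granted (it even needs only $z\geq 0$ and $\mathbf{e}^\top z=t$, not $z\leq\mathbf{e}$). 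The one gap you flag --- that $s^\star$ is a \emph{global} supergradient of $\phi_t$ at the degenerate point $\mu^\star$ --- is real but fillable by elementary means, with no dual machinery, no $\epsilon$-perturbation, and no limiting argument for ties. Since the entries of $s^\star$ are nondecreasing and $\phi_t$ is permutation-invariant, the rearrangement inequality reduces the claim to nonincreasing $\mu\geq 0$. Then apply $\log x\leq\log c + x/c - 1$ with $c=a_\ell$ to each term $\log\mu_\ell$, $\ell\leq\iota$, and to each of the $t-\iota$ copies of $\log\delta$ (with $c=a_\ell$, $\iota<\ell\leq t$), where $\iota,\delta$ are as in Lem.~\ref{Ni13}. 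This leaves the residual $\sum_{\ell=\iota+1}^{t}(\delta-\mu_\ell)/a_\ell$, and since $\delta\geq\mu_{\iota+1}\geq\mu_\ell$ by Lem.~\ref{Ni13} and $1/a_\ell\leq 1/a_t$,
\[
\sum_{\ell=\iota+1}^{t}\frac{\delta-\mu_\ell}{a_\ell}\;\leq\;\frac{1}{a_t}\Bigl((t-\iota)\delta-\textstyle\sum_{\ell=\iota+1}^{t}\mu_\ell\Bigr)\;=\;\frac{1}{a_t}\sum_{\ell>t}\mu_\ell\,,
\]
which is exactly what the tail coefficients of $s^\star$ absorb; the inequality $\phi_t(\mu)\leq\phi_t(\mu^\star)+\langle s^\star,\mu-\mu^\star\rangle$ follows.

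One caution about your proposed formalization route. Mirroring Thm.~\ref{dominance_zm_speca} at the \emph{matrix} level --- a closed-form feasible point of \eqref{dualzm1a} or \eqref{eq:dual_gamma_dopt_01} with objective value $\zh$ and $\Theta$-block built from $s^\star$ --- is unlikely to go through as stated. The natural candidate $\Theta=\Psi^{-1}\Diag(d)\Psi^{-\top}$ with $d_i=s^\star_i a_i$ satisfies the linear constraint with $\tau=1$, $\nu=0$, $\omega=\mathbf{e}-d\geq 0$, but its spectral term involves the eigenvalues of $\Diag(d)^{1/2}M^{-1}\Diag(d)^{1/2}$, and comparing those with $\prod_{i\leq t}a_{[i]}$ is again an eigenvalue-versus-diagonal question --- precisely the work your Schur--Horn step was doing, which is presumably why a one-shot dual certificate (in the style of the paper's $\zg\leq\zs$ proof, which the diagonal case of your argument essentially reproduces, since $d_i(M)=\lambda_i(M)$ when $M$ is diagonal) had not settled the conjecture. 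Keep the certificate at the vector level, as a supergradient of the concave function $g(z)=\phi_t((z_ia_i)_i)$ proved by the elementary inequality above, and your argument closes --- settling what the authors left open.
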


\section{Local-search heuristics}\label{sec:heur}
We describe heuristics  to construct a feasible solution to \ref{prob} by applying  a local-search procedure from an initial solution. The basic idea, known as the Fedorov Exchange Algorithm, is well known especially for \ref{prob01}, and 
is contained in, for example \cite{Miller} and \cite{Atkinson}, and the many references therein. 
For completeness, and reproducibility considerations, we give a lot of details that are not all new.
We describe ours ways of constructing the initial solution (\S4.1-2.) and  performing the local search (\S4.3); see \cite{HARMAN2020},
for alternative ideas (for the case of ``saturated designs'', i.e., $s=m$). 
We also briefly explain how we update the objective value  inside the local search in order to make it more efficient (\S4.4).  
Without loss of generality, we assume, in this section, that $l=0$ in \ref{prob}. 

\subsection{Initial solutions from the SVD decomposition of $A$}\label{subsec:heur1}

 Next, we  show how we obtain  initial solutions for our local-search  procedures from the full real singular-value decomposition (SVD) $A=U\Sigma V^\top$ (see \cite{GVL1996}, for example), where $U\in\mathbb{R}^{n\times n}$, $V\in\mathbb{R}^{m\times m}$ are orthonormal matrices and $\Sigma=\mathrm{diag}(\sigma_1,\sigma_2,\dots,\sigma_m)\in\mathbb{R}^{n\times m}$ with  singular values $\sigma_1\ge\sigma_2\ge\dots\ge\sigma_m\ge0$.
 
  First, to ensure that we start the local-search procedures with a feasible solution for \ref{prob} with finite objective value, we construct a vector $\tilde{x}\!\in\!\{0,1\}^n$, such that $\mathbf{e}^\top \tilde{x}=m$ and $A^\top\mbox{diag}(\tilde{x})A\!\succ\! \!0$.   
   This is equivalent to choosing $m$ linearly independent rows of $A$, and setting $\tilde{x}$ as the incidence vector for the selected subset of rows.  To select the  rows,  we use the Matlab function nsub\footnote{\url{www.mathworks.com/matlabcentral/fileexchange/83638-linear-independent-rows-and-columns-generator}}  (see \cite{FLPX2021} for details). We denote the set of indices of the selected rows by $\tilde{N}$.

 We note that 
 for each  $k \in N$, we have $\sum_{j\in N} U_{jk}^2 = 1$ and $\sum_{j\in N} U_{kj}^2 = 1$. Then, defining
\[
\textstyle
x^0_j := \sum_{k=1}^s U_{jk}^2\,,\quad   j\in N,
\]
we have  $\mathbf{e}^\top x^0 = s$ and  $0 \leq x^0 \leq \mathbf{e}$. So, $x^0$ is a feasible solution of \ref{cont_rel}\,.  

We let $\tau$ be the permutation of the indices in $N$, such that      $x^0_{\tau(1)}\geq x^0_{\tau(2)}\geq\cdots\geq x^0_{\tau(n)}$\,.
Then, we propose two procedures to construct a feasible solution $\bar{x}$ for \ref{prob}, considering  $\tau$ and $\tilde{x}$.
\begin{itemize}
    \item `Bin$(x^0)$':   
    Let $\bar{N}$ be the first $s-m$ indices in $\tau$ (which depends on $x^0$) that are not in $\tilde {N}$. Set  $\bar{x}_{j}:=1$, for $j\in\bar{N}$, and $\bar{x}_{j}:=\tilde{x}_j$\,, for $j\notin \bar{N}$.
     \item `Int$(x^0)$': 
     Let $\Delta =u-\tilde{x}$ and $\bar{s}=s-m$. Define, for all $j\in N$, 
     \[
     \textstyle
     \tilde y_{\tau(j)} :=  \min\left\{\Delta_{\tau(j)}\,,\,\max\left\{0,\bar{s}-\sum_{i=1}^{j-1}\tilde{y}_{\tau(i)}\right\}\right\}.
     \]
     Then, set $\bar{x}:=\tilde{x}+\tilde{y}$.
\end{itemize}

\begin{example}\phantom{.}\break 
\vspace{-10pt}

\noindent Bin$(x^0)$: 
 Suppose that $m=3$ and $s=5$, and we need to complete 
 \[
\tilde{x}:=(0,1,0,0,1,1,0)
 \]
 to a binary vector, with $s=5$ ones.
We do this by choosing $s-m=2$ more coordinates,
in the order of magnitude in 
\[
x^0:=(1,.7,.5,.3,.2,0,0),
\]
resulting in 
\[
\bar{x}:=(1,1,1,0,1,1,0).
\]

\noindent Int$(x^0)$: Suppose now that we have the same $x^0$, but 
$s=10$ (so $s-m=7$) and all upper bounds equal to 3.
Then 
\[
\tilde{y}:=(3,2,2,0,0,0,0),
\]
and
\[
\bar{x}:=(3,3,2,0,1,1,0).  \qed
\]
\end{example}

Next, we note that the objective of \ref{prob} is equal to  $\ldet (\Sigma^{\top}U^{\top}\Diag(x)U\Sigma)$, and so  the choice of $x$ is related to the rows of $U\Sigma$. So, we  also define 
    \[
    \textstyle
    {\hat{x}^0_j := \sum_{i=1}^{m} \left(U_{ji} \Sigma_{ii}\right)^2,\quad   j\in N.}
    \]
    Finally, replacing $x^0$ by $\hat{x}^0$ on the procedures described above, we construct two alternative initial solutions to our local-search procedures. We note that although $\hat{x}^0\geq 0$, it need not be  feasible for \ref{cont_rel}\,.

\subsection{Initial solution  from the continuous relaxation}
We also compute an initial solution to our local-search procedures from a non integer solution $\hat{x}$ to a  relaxation of \ref{prob}, using the following rounding procedure.
\begin{itemize}
    \item `Round$(\hat x)$':   
    Let $\bar{x} := \lfloor \hat{x} \rfloor$, $x^f:=\hat{x}-\bar{x}$, and $\tau$ be the permutation of the indices in $N$, such that      $x^f_{\tau(1)}\geq x^f_{\tau(2)}\geq\cdots\geq x^f_{\tau(n)}$\,.
    Let $\bar{N}$ be the first $s-\mathbf{e}^\top \bar{x}$ indices in $\tau$. Set  $\bar{x}_{j}:=\bar{x}_{j} + 1$, for $j\in\bar{N}$.
\end{itemize}

\subsection{Local-search procedures}\label{subsec:heur2}

Next, we present the three local-search procedures  that we experimented with. They consider as the criterion for improvement of the given solution, the increase in the  value of the objective function of \ref{prob}. The neighborhood of a given solution $\bar{x}$ is defined by
\[
\begin{array}{l}
\mathcal{N}(\bar{x}):= \{y\in\mathbb{Z}^n:
 0\leq y\leq u,~y_i=\bar{x}_i+1,~y_j=\bar{x}_j-1,~ y_k=\bar{x}_k\,,\hphantom{vvvv}\\
\multicolumn{1}{r}{ \forall k\neq i,k\neq j, i\neq j,  i,j,k\in N\},}
\end{array}
\]
and the procedures are described in the following. 
\begin{itemize}
    \item `FI' (Local Search First Improvement): 
    Starting from $\bar{x}$, the procedure visits the solution in $\mathcal{N}(\bar{x})$ with increased objective value with respect to $\bar{x}$, such that $i$ is the least possible index, and $j$ is the least possible index for the given $i$.   
    \item `FI$^+$' (Local Search First Improvement Plus): 
       Starting from $\bar{x}$, the procedure visits the solution in $\mathcal{N}(\bar{x})$ with increased objective value with respect to $\bar{x}$, such that $i$ is the least possible index, and $j$ is selected in $N$, as the  index that maximizes the objective value, for the given $i$.   
    \item `BI' (Local Search Best Improvement): 
       Starting from $\bar{x}$, the procedure visits the solution in $\mathcal{N}(\bar{x})$ with increased objective value with respect to $\bar{x}$, such that $i$ and $j$ are selected in $N$, as the pair of indices that maximizes the objective value.  
\end{itemize}
 For more details on the local-search algorithms, see \cite[Alg. 4]{PonteFampaLeeSBPO22}.

\subsection{Fast local search}\label{sec:fast}

An efficient local search for \ref{prob} is
based on fast computation of $\ldet(B+v_iv_i^\top -v_jv_j^\top)$,
already knowing  $\ldet B$, where $B:=
\sum_{\ell\in N}  \bar{x}_\ell v_\ell v_\ell^\top$,
for some $\bar x$ that is feasible for \ref{prob} such that 
$\bar{x}+\mathbf{e}_i-\mathbf{e}_j$  is
also feasible. 
If $\ldet\left(B+v_iv_i^\top -v_jv_j^\top\right)$ $ > \ldet B$,
then $\bar{x}\!+\!\mathbf{e}_i\!-\!\mathbf{e}_j$ is an improvement on 
$\bar{x}$ in \ref{prob}. 

The well-known Sherman-Morrison formula
\begin{equation}\label{invSM}
(M+ab^\top)^{-1} = M^{-1} -\frac{(M^{-1}a)(b^\top M^{-1})}{(1+b^\top M^{-1}a) }
\end{equation}
and the well-known  matrix-determinant lemma
\begin{equation}\label{detSM}
\det(M+ab^\top) = (1+b^\top M^{-1}a) \det(M)
\end{equation}
are useful for rank-one updates of inverses and determinants, respectively,
in $\mathcal{O}(m^2)$ for an order-$m$ matrix. Use of these equations for
fast updates in local-search algorithms can be found, for example, in \cite{Miller}, \cite{Bohning}, and \cite[Chap. 12]{Atkinson}.

We apply these formulas in the implementation of our local-search procedures. Outside the inner loop of the procedures, that is, the loop in the index $j$,  
we calculate the inverse of $B +v_iv_i^\top$  from the inverse of $B$, using the Sherman-Morrison formula
(setting $M:=B$, $a:=b:=v_i$).
Inside the inner loop (with $i$ fixed), for each $j$ we calculate $\ldet( B)$ 
from the inverse of   $B +v_iv_i^\top$, using the matrix-determinant lemma
(setting $M:= B +v_iv_i^\top$\,,~ 
$a:=-v_j$ and $b:=v_j$).

\section{Other local-search directions}\label{sec:LSdirections}

Next, we investigate more general directions and step sizes to be used in our local-search procedures. These ideas cannot be found in the literature of exchange algorithms. 
We defer proofs to the Appendix.
Given a feasible solution  $\bar x$ for \ref{prob} and an integer  direction $d\in \mbox{n.s.}(\mathbf{e}^\top)\cap \mathbb{Z}^n\setminus\{0\}$
with relatively-prime components, let    $B:=
\sum_{\ell\in N}  \bar{x}_\ell v_\ell v_\ell^\top \in \mathbb{S}_{++}^m$ and $V:=\sum_{\ell\in N}  d_\ell v_\ell v_\ell^\top\in \mathbb{S}^m$. We are interested in the  integer line-search problem; that is,
choosing a step size $k$, 
so that moving from $\bar x$ in the direction $d$ using the  step size $k$, 
the new point 
$\bar{x}+k d$  is
also feasible for \ref{prob}, and $g(k):=\ldet(B + kV)$ is maximized. 
For feasibility,  $k$ must be in the interval
$[k_{\min},k_{\max}]$, where
\[
\begin{array}{l}
 k_{\min}:=\left\lceil\max\left\{\displaystyle\max_{\ell:d_\ell>0}\left\{\textstyle\frac{l_\ell-\bar{x}_\ell}{d_\ell}\right\},\max_{\ell:d_\ell<0}\left\{\textstyle\frac{u_\ell-\bar{x}_\ell}{-d_\ell}\right\}\right\}\right\rceil,\\[10pt]
k_{\max}:=\left\lfloor\min\left\{\displaystyle\min_{\ell:d_\ell>0}\left\{\textstyle\frac{u_\ell-\bar{x}_\ell}{d_\ell}\right\},\min_{\ell:d_\ell<0}\left\{\textstyle\frac{l_\ell-\bar{x}_\ell}{-d_\ell}\right\}\right\}\right\rfloor.
\end{array}
\]
The function $g$ is strictly concave, therefore the optimal $k$,
for a given $V$, will be at one of the endpoints of the interval $[k_{\min},k_{\max}]$,
or at the round up or round down of the stationary point of $g$,
if either is in the interval. 

In the interest of
efficiency for carrying out the line search, we investigate cases 
where for the given $V$, it is possible to compute the stationary point of $g$  with a closed formula.
We have that  
\[
    g(k) = \ldet(B^{\frac{1}{2}}(I+kB^{-\frac{1}{2}}VB^{-\frac{1}{2}})B^{\frac{1}{2}}) = \ldet(B) + \ldet(I+kB^{-\frac{1}{2}}VB^{-\frac{1}{2}}).
\]
We denote by $\lambda_1\!\geq\! \lambda_2\!\geq\! \cdots\!\geq\! \lambda_r$, the nonzero eigenvalues of $B^{-\frac{1}{2}}VB^{-\frac{1}{2}}$. Then
\[
g(k)= \ldet(B) + \sum_{i= 1}^{r} \log(1 + k \lambda_i)
\quad\mbox{ and }\quad
g^\prime(k) =\sum_{i=1}^r \frac{\lambda_i}{1 + k\lambda_i}~.
\]
Let $[r]:=\{1,\ldots,r\}$  and
$a_\ell := \left(\ell+1\right) e_{\ell+1}(\lambda_1,\ldots,\lambda_r)$, where 
\[
e_{\ell+1}(\lambda_1,\ldots,\lambda_r):=\sum_{\substack{S\subseteq [r]\,:\\ |S|=\ell+1}} \bigg( \prod_{i \in S} \lambda_i\bigg)
\]
is the $(\ell+1)$-st elementary symmetric polynomial on 
$\lambda_1,\ldots,\lambda_r$\,.
Then, the stationary point of $g$ satisfies
\begin{align}
&\sum_{\ell = 0}^{r-1}a_\ell k^\ell = 0,\label{derg0m}\\
&k\in\left(-\frac{1}{\lambda_1},-\frac{1}{\lambda_2}\right)\cup \left(-\frac{1}{\lambda_2},-\frac{1}{\lambda_3}\right)\cup\cdots\cup\left(-\frac{1}{\lambda_{r-1}},-\frac{1}{\lambda_r}\right),\label{intkm}
\end{align}
where \eqref{derg0m} determines that $g'$ vanishes at the stationary point and \eqref{intkm} ensures that the stationary point is in the domain of both $g$ and $g'$.

In the following, we establish that the matrices $V$ in which we are interested are indefinite. The result will be used in the development of closed formulas for the stationary points of $g$, when $V$ has low rank.

\begin{lemma}\label{lem:indefinite}
    Let $V\in\mathbb{S}^m$, such that $V= V^+ + V^-$, where $V^+$ is positive semidefinite with rank $r^+>0$ and $V^-$ is negative semidefinite with rank $r^->0$.     Assume that 
 $\mbox{rank}(V)= r^++r^-$. Then $V$ is indefinite and has $r^+$  positive eigenvalues and $r^-$  negative eigenvalues.
\end{lemma}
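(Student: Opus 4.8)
The plan is to prove this via a pair of complementary dimension-counting arguments, using the min-max (Courant-Fischer) characterization of eigenvalues together with a subspace-intersection inequality. Write $V = V^+ + V^-$ with $V^+ \succeq 0$ of rank $r^+$ and $V^- \preceq 0$ of rank $r^-$, and let $\mathcal{K}^+ := \ker(V^+)$ and $\mathcal{K}^- := \ker(V^-)$, which have dimensions $m - r^+$ and $m - r^-$ respectively. First I would establish that $V$ has \emph{at least} $r^+$ positive eigenvalues and \emph{at least} $r^-$ negative eigenvalues; then, since $\mathrm{rank}(V) = r^+ + r^-$ by hypothesis, $V$ has no more than $r^+ + r^-$ nonzero eigenvalues, forcing equality in both counts and immediately yielding indefiniteness (as $r^+, r^- > 0$).

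To get the lower bound on the number of positive eigenvalues, I would exhibit an $r^+$-dimensional subspace on which the quadratic form $x^\top V x$ is positive. A natural candidate is $\mathcal{U}^+ := (\mathrm{range}(V^+)) \cap \mathcal{K}^-$, i.e.\ the intersection of the range of $V^+$ with the kernel of $V^-$. By the standard dimension formula for subspace intersection, $\dim \mathcal{U}^+ \geq r^+ + (m - r^-) - m = r^+ - r^-$, which is not quite enough on its own; so instead I would work with $\mathcal{K}^-$ directly. On $\mathcal{K}^-$ we have $x^\top V x = x^\top V^+ x \geq 0$, and this vanishes only on $\mathcal{K}^- \cap \mathcal{K}^+$. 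The key observation is that $\mathcal{K}^- \cap \mathcal{K}^+ = \ker V^+ \cap \ker V^- \subseteq \ker V$, and conversely the rank hypothesis $\mathrm{rank}(V) = r^+ + r^-$ pins down $\dim \ker V = m - r^+ - r^-$, which then forces $\ker V = \mathcal{K}^+ \cap \mathcal{K}^-$ (since the reverse inclusion $\mathcal{K}^+\cap\mathcal{K}^-\subseteq\ker V$ always holds and the dimension count $\dim(\mathcal{K}^+\cap\mathcal{K}^-)\geq (m-r^+)+(m-r^-)-m = m-r^+-r^-$ matches). Thus on $\mathcal{K}^-$, a subspace of dimension $m - r^-$, the form $x^\top V x$ is positive semidefinite with kernel exactly $\mathcal{K}^+ \cap \mathcal{K}^-$ of dimension $m - r^+ - r^-$, so it is positive definite on a complement of dimension $(m - r^-) - (m - r^+ - r^-) = r^+$. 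By Courant-Fischer, $V$ therefore has at least $r^+$ positive eigenvalues. By the symmetric argument on $\mathcal{K}^+$ (where $x^\top V x = x^\top V^- x \leq 0$), $V$ has at least $r^-$ negative eigenvalues.

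Finally, since the total number of nonzero eigenvalues of $V$ equals $\mathrm{rank}(V) = r^+ + r^-$, and we have accounted for at least $r^+$ positive and at least $r^-$ negative ones, both inequalities must be tight: $V$ has exactly $r^+$ positive and exactly $r^-$ negative eigenvalues, and no others are nonzero. Because $r^+ > 0$ and $r^- > 0$, the matrix $V$ has eigenvalues of both signs and is indefinite.

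I expect the main obstacle to be the careful bookkeeping that establishes $\ker V = \mathcal{K}^+ \cap \mathcal{K}^-$; this identity is exactly where the rank hypothesis $\mathrm{rank}(V) = r^+ + r^-$ does its work, and it is what makes the positive-semidefinite form on $\mathcal{K}^-$ vanish precisely on the common kernel rather than on a larger set. Without this the restriction of $x^\top V x$ to $\mathcal{K}^-$ could be merely positive semidefinite with a too-large kernel, and the count of positive eigenvalues would drop below $r^+$. Everything else is a routine application of Courant-Fischer and Grassmann's dimension formula.
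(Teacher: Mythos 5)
Your proof is correct, but it takes a genuinely different route from the paper's. The paper's argument is a direct application of Weyl's inequalities for eigenvalues of sums of symmetric matrices (citing Horn and Johnson, Thm.~4.3.7): ordering the eigenvalues of $V^+$ and $V^-$, it bounds $\lambda_{b}(V) \geq \lambda_{b}(V^-) + \lambda_m(V^+) = 0$ and $\lambda_{a+1}(V) \leq \lambda_1(V^-) + \lambda_{a+1}(V^+) = 0$ (with $a = r^+$, $m-b = r^-$), and then uses $\rank(V) = r^+ + r^-$ to upgrade the resulting sign pattern to strict counts. You instead exhibit explicit subspaces on which the quadratic form is definite: on $\ker(V^-)$ the form equals $x^\top V^+ x$, and your key bookkeeping step --- that the rank hypothesis forces $\dim\bigl(\ker(V^+) \cap \ker(V^-)\bigr) = m - r^+ - r^-$, hence $\ker V = \ker(V^+)\cap\ker(V^-)$ --- gives a complement of dimension exactly $r^+$ on which $V$ is positive definite, and symmetrically for the negative side; Courant--Fischer then yields the lower bounds on the inertia, and the rank hypothesis again closes the count. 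What each approach buys: the paper's proof is shorter once Weyl's inequality is taken as a black box, and it works directly with the ordered eigenvalue lists; yours is more self-contained (Courant--Fischer plus Grassmann's dimension formula, nothing deeper), is closer in spirit to Sylvester-type inertia arguments, and produces the structural identity $\ker V = \ker(V^+)\cap\ker(V^-)$ as a byproduct, which the paper's proof never makes explicit. Both proofs use the hypothesis $\rank(V) = r^+ + r^-$ in the same two places: to cap the total number of nonzero eigenvalues, and (in your case) to pin down the dimension of the common kernel --- your closing remark correctly identifies this as the crux.
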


\begin{proposition}\label{cor:indef}
Let   $V^+:=\sum_{i=1}^{r^+} d_i^+v_iv_i^\top$ and 
 $V^-:=\sum_{i=1}^{r^-} d_i^-v_iv_i^\top$, where $d_i^+>0$ for all $i=1,\ldots,r^+$ and $d_i^-<0$ for all $i=1,\ldots,r^-$. Let  $V:=V^++V^-$ and assume that $\mbox{rank}(V)=r^++r^-$. Then, for any given $B\in \mathbb{S}_{++}^m$\,, $B^{-\frac{1}{2}}VB^{-\frac{1}{2}}$ is indefinite and has $r^+$ positive eigenvalues and $r^-$ negative eigenvalues.
 \end{proposition}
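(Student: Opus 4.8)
The plan is to reduce the claim to Lemma~\ref{lem:indefinite} and then transfer the conclusion through a congruence transformation. First I would check that the rank hypotheses of Lemma~\ref{lem:indefinite} are met by the $V^+$ and $V^-$ defined here. Since each $d_i^+>0$, the matrix $V^+=\sum_{i=1}^{r^+} d_i^+ v_i v_i^\top$ is positive semidefinite with $\rank(V^+)\leq r^+$, and likewise $V^-$ is negative semidefinite with $\rank(V^-)\leq r^-$. Subadditivity of rank then gives
\[
r^++r^-=\rank(V)=\rank(V^++V^-)\leq \rank(V^+)+\rank(V^-)\leq r^++r^-,
\]
so both inequalities are equalities and $\rank(V^+)=r^+$, $\rank(V^-)=r^-$. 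Thus the hypotheses of Lemma~\ref{lem:indefinite} hold, and the lemma yields that $V$ is indefinite with exactly $r^+$ positive eigenvalues and $r^-$ negative eigenvalues.

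Second, I would pass from $V$ to $B^{-\frac{1}{2}}VB^{-\frac{1}{2}}$. Because $B\in\mathbb{S}^m_{++}$, the matrix $B^{-\frac{1}{2}}$ is symmetric and nonsingular, so
\[
B^{-\frac{1}{2}}VB^{-\frac{1}{2}}=\left(B^{-\frac{1}{2}}\right)^\top V\left(B^{-\frac{1}{2}}\right)
\]
is congruent to $V$. By Sylvester's law of inertia, congruent symmetric matrices have the same numbers of positive, negative, and zero eigenvalues, so $B^{-\frac{1}{2}}VB^{-\frac{1}{2}}$ inherits the inertia of $V$, namely $r^+$ positive and $r^-$ negative eigenvalues, which is exactly the assertion.

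I do not anticipate a real obstacle, since the statement is essentially a congruence-invariance corollary of Lemma~\ref{lem:indefinite}. The only points needing care are the rank bookkeeping above (confirming that the single assumption $\rank(V)=r^++r^-$ forces the exact individual ranks required to invoke the lemma) and the observation that the symmetry of $B^{-\frac{1}{2}}$ is precisely what makes $V\mapsto B^{-\frac{1}{2}}VB^{-\frac{1}{2}}$ a congruence, so that inertia---rather than merely nonsingularity---is preserved.
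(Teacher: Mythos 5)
Your proposal is correct and follows essentially the same route as the paper's proof: establish $\rank(V^+)=r^+$ and $\rank(V^-)=r^-$ via rank subadditivity, invoke Lemma~\ref{lem:indefinite} for the inertia of $V$, and transfer to $B^{-\frac{1}{2}}VB^{-\frac{1}{2}}$ by Sylvester's law of inertia. Your write-up is slightly more explicit than the paper in spelling out why $V\mapsto B^{-\frac{1}{2}}VB^{-\frac{1}{2}}$ is a congruence, but the argument is identical in substance.
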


\subsection{Computing the optimal step size}\label{sec:intlinesearch}

\subsubsection{Rank-2 update}\label{sec:Rank-2}

We consider now $V:= d_{i_1}v_{i_1}v_{i_1}^\top +d_{i_2} v_{i_2}v_{i_2}^\top$ has rank 2 and  that $d_{i_1}+d_{i_2}=0$. 
Without loss of generality, we can take $d_{i_1}:=1$ and
$d_{i_2}:=-1$. 
We  denote by $\lambda_1,\lambda_2$ the nonzero eigenvalues of $B^{-\frac{1}{2}}VB^{-\frac{1}{2}}$.
From \eqref{derg0m} and \eqref{intkm}, we see that a stationary point $\bar k$ of $g$ satisfies the linear equation $2\lambda_1\lambda_2 \bar{k} + (\lambda_1+\lambda_2) = 0$ and $\bar{k}\in\left(-\frac{1}{\lambda_1},-\frac{1}{\lambda_2}\right)$.
Then
$
\bar{k}= - \frac{\lambda_1+\lambda_2}{\lambda_1\lambda_2}.
$

\begin{remark}
For the rank-2 update, in order to obtain $\bar{k}$ more efficiently, we use the following alternative calculation  in our implementation. We  define 
$\gamma_{ij} := v_i^\top B^{-1}v_j$.
From \eqref{invSM} and \eqref{detSM}, we have
\begin{equation*} 
g(k) = \ldet(B) + \log\left(k^2 ( \gamma_{i_1i_2}^2 -\gamma_{i_1i_1}\gamma_{i_2i_2} ) 
+k( \gamma_{i_1i_1} -\gamma_{i_2i_2}) + 1\right).
\end{equation*}
The stationary point $\bar k$ of $g$ 
should then satisfy 
$\bar k^2 (\gamma_{i_1i_2}^2 -\gamma_{i_1i_1}\gamma_{i_2i_2})  +\bar k( \gamma_{i_1i_1} -\gamma_{i_2i_2}) + 1> 0$ and 
\begin{align*}
    &g^{\prime}(\bar k) = \frac{2\bar k( \gamma_{i_1i_2}^2-\gamma_{i_1i_1}\gamma_{i_2i_2} )+ \gamma_{i_1i_1} -\gamma_{i_2i_2}}{\bar k^2 (\gamma_{i_1i_2}^2-\gamma_{i_1i_1}\gamma_{i_2i_2} )  +\bar k( \gamma_{i_1i_1} -\gamma_{i_2i_2}) + 1} = 0. 
\end{align*}
Then 
    \begin{equation}\label{kstar}
    \bar k = \frac{ \gamma_{i_1i_1} -\gamma_{i_2i_2}}{2(\gamma_{i_1i_1}\gamma_{i_2i_2}-\gamma_{i_1i_2}^2 )}~.
    \end{equation}
    We note that this last formula, with a very closely-related use, appears in \cite{Bohning}
\end{remark}

\subsubsection{Rank-3 update}\label{sec:Rank-3}

We consider that $V:= d_{i_1}v_{i_1}v_{i_1}^\top +d_{i_2} v_{i_2}v_{i_2}^\top+d_{i_3} v_{i_3} v_{i_3}^\top$ has rank 3 and  that $d_{i_1}+d_{i_2}+d_{i_3}=0$.
We note that unlike the rank-2 case, there are many possible integer directions; but our analysis below works for all of them. 
We  denote by $\lambda_1,\lambda_2,\lambda_3$ the nonzero eigenvalues of $B^{-\frac{1}{2}}VB^{-\frac{1}{2}}$. We can  assume, without loss of generality, that $\lambda_1>0> \lambda_2\geq \lambda_3$\,. 

\begin{remark}\label{rem:wlog_lam}
The ``without loss of generality'' is because of Prop. \ref{cor:indef} and that we could replace 
$d$ by $-d$ (in the definition of $V$), 
which just changes the signs of and re-labels the non-zero eigenvalues, and furthermore any stationary point $\bar{k}$ of $g$ becomes $-\bar{k}$. 
\end{remark}

From \eqref{derg0m} and \eqref{intkm}, we see that a stationary point $\bar k$ of $g$ satisfies
the quadratic equation:
\begin{align}
    &3\lambda_1\lambda_2\lambda_3 \bar{k}^2 + 2(\lambda_1\lambda_2+\lambda_1\lambda_3+\lambda_2\lambda_3) \bar{k} + \lambda_1+\lambda_2+\lambda_3 = 0,\label{derg0}\\
    &\bar{k}\in\left(-\frac{1}{\lambda_1},-\frac{1}{\lambda_2}\right)\cup \left(-\frac{1}{\lambda_2},-\frac{1}{\lambda_3}\right).\nonumber 
\end{align}

\begin{theorem}\label{prop:rank3}
  Given $\lambda_1>0>\lambda_2\geq \lambda_3$, define
     \[
    a:=
   3\lambda_1\lambda_2\lambda_3,\; \; 
    b:=2(\lambda_1\lambda_2+\lambda_1\lambda_3+\lambda_2\lambda_3),\;\;
    c:=\lambda_1+\lambda_2+\lambda_3\,.
    \]
Let
\begin{equation}\label{kpm}
\bar k_{\pm}:= 
\frac{-b \pm \sqrt{b^2 - 4ac}}{2a}\,.
\end{equation}
be the two solutions of \eqref{derg0}.
Then, 
\[
\bar{k}_{-}\in\left(-\frac{1}{\lambda_1}, -\frac{1}{\lambda_3}\right) \mbox{ and }~ \bar{k}_{+}\geq-\frac{1}{\lambda_3}.
\]
\end{theorem}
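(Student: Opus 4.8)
The plan is to work with the rational form of $g'$ and to locate the two roots of \eqref{derg0} by examining the sign of its numerator at the endpoints of the domain of $g$, combined with the global shape of $g$ on that domain. Writing $P(k):=ak^2+bk+c$ for the quadratic in \eqref{derg0}, clearing denominators in $g'(k)=\sum_{i=1}^3 \lambda_i/(1+k\lambda_i)$ gives $g'(k)=P(k)/D(k)$, where $D(k):=(1+k\lambda_1)(1+k\lambda_2)(1+k\lambda_3)$. Since $\lambda_1>0$ and $\lambda_2,\lambda_3<0$, we have $a=3\lambda_1\lambda_2\lambda_3>0$, so $P$ is an upward parabola whose roots satisfy $\bar k_-\le \bar k_+$; this is the first ingredient.

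Next I would pin down the natural domain of $g$. The conditions $1+k\lambda_i>0$, $i=1,2,3$, reduce, using $\lambda_1>0>\lambda_2\ge\lambda_3$ (hence $-1/\lambda_3\le -1/\lambda_2$), to the single open interval $\mathcal D:=(-1/\lambda_1,-1/\lambda_3)$, on which all three factors of $D$ are strictly positive, so $g'$ and $P$ share the same sign there. On $\mathcal D$ the function $g(k)=\ldet(B)+\sum_{i=1}^3\log(1+k\lambda_i)$ tends to $-\infty$ at both endpoints and is strictly concave; it therefore has a unique interior maximizer $\bar k^\star\in\mathcal D$, and $g'(\bar k^\star)=0$ forces $P(\bar k^\star)=0$. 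Thus at least one root of $P$ lies strictly inside $(-1/\lambda_1,-1/\lambda_3)$.

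It then remains to separate the two roots, and for this I would evaluate $P$ at the right endpoint. Since the factor $(1+k\lambda_3)$ vanishes at $k=-1/\lambda_3$, two of the three terms in the expanded numerator drop out and one obtains the clean expression $P(-1/\lambda_3)=(\lambda_3-\lambda_1)(\lambda_3-\lambda_2)/\lambda_3$, whose sign is $\le 0$ because $\lambda_3-\lambda_1<0$, $\lambda_3-\lambda_2\le 0$, and $\lambda_3<0$. Combined with $a>0$, this gives $\bar k_-\le -1/\lambda_3\le \bar k_+$, which already yields $\bar k_+\ge -1/\lambda_3$. Finally, the interior maximizer obeys $\bar k^\star<-1/\lambda_3\le \bar k_+$, so $\bar k^\star\neq \bar k_+$ and hence $\bar k^\star=\bar k_-$; this places $\bar k_-=\bar k^\star$ in the open interval $(-1/\lambda_1,-1/\lambda_3)$, establishing both claims.

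The computations themselves are routine; the subtle point I expect to be the main obstacle is the degenerate case $\lambda_2=\lambda_3$, where $-1/\lambda_3$ is itself a root of $P$ and the endpoint estimate $P(-1/\lambda_3)\le 0$ is tight, so one cannot argue membership in the \emph{open} interval from the sign of $P$ at the poles alone. The concavity argument is precisely what makes the treatment uniform: because the interior maximizer is guaranteed to lie strictly inside $\mathcal D$, it must coincide with $\bar k_-$ even when $\bar k_+=-1/\lambda_3$, so no separate case analysis is needed and the strict membership $\bar k_-\in(-1/\lambda_1,-1/\lambda_3)$ survives.
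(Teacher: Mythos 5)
Your proof is correct, but it takes a genuinely different route from the paper's. The paper works directly with the closed-form roots \eqref{kpm}: it first verifies $b^2-4ac\geq 0$ by expanding the discriminant into a sum of nonnegative terms, and then compares $\bar{k}_{-}$ and $\bar{k}_{+}$ with the endpoints $-1/\lambda_1$ and $-1/\lambda_3$ by isolating the radical and squaring, with case analysis on the sign of quantities like $b-2a/\lambda_3$; each comparison reduces to a sign condition such as $-\lambda_1(\lambda_1-\lambda_3)(\lambda_1-\lambda_2)\leq 0$. You instead exploit the origin of the quadratic as the numerator of $g'$: writing $P(k)=\sum_{i=1}^3\lambda_i\prod_{j\neq i}(1+k\lambda_j)$, the evaluation $P(-1/\lambda_3)=(\lambda_3-\lambda_1)(\lambda_3-\lambda_2)/\lambda_3\leq 0$ together with $a>0$ sandwiches $-1/\lambda_3$ between the roots (and gives realness of the roots for free, with no discriminant computation), while strict concavity of $g$ and its divergence to $-\infty$ at both ends of $(-1/\lambda_1,-1/\lambda_3)$ produce a unique interior critical point that must be $\bar{k}_{-}$. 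Your approach buys three things: it avoids the radical manipulations entirely (where the paper's final equivalence in fact carries a harmless sign slip, $\lambda_3(\lambda_3-\lambda_1)(\lambda_3-\lambda_2)\geq 0$ should be $-\lambda_3(\lambda_3-\lambda_1)(\lambda_3-\lambda_2)\geq 0$); it yields \emph{strict} membership $\bar{k}_{-}\in(-1/\lambda_1,-1/\lambda_3)$ uniformly, including the degenerate case $\lambda_2=\lambda_3$ where $\bar{k}_{+}=-1/\lambda_3$ exactly and a pure endpoint-sign test is tight, a subtlety the paper's squaring argument (which only gives $\bar{k}_{-}\leq -1/\lambda_3$) does not resolve as cleanly; and it generalizes immediately to higher-rank updates, since the same concavity argument locates the unique admissible root of the degree-$(r-1)$ numerator regardless of $r$. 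What the paper's computation buys in exchange is a fully self-contained algebraic certificate — explicit discriminant and explicit identification of which closed-form root is which — without appealing to any analytic properties of $g$.
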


Thm. \ref{prop:rank3} implies that the unique stationary point of $g$ is $\bar{k}_-$\,.

\section{Numerical Experiments}\label{sec:num_exp}

We implemented the three local-search procedures  described in \S\ref{subsec:heur2}:  `FI', `FI$^+$', and `BI'; and two local-search directions $d$, as discussed in \S\ref{sec:LSdirections}, one defining a rank-2 update and the other defining a rank-3 update of the matrix $B:=
\sum_{\ell\in N}  \bar{x}_\ell v_\ell v_\ell^\top \in \mathbb{S}_{++}^m$, where $\bar x$ is the current solution.  The rank-2 update is given by direction  $d:= \mathbf{e}_{i_1}-\mathbf{e}_{i_2}$, for all distinct  $i_1,i_2\in N$, we experimented with step size $k=1$ 
 and with the optimal step size as discussed in \S\ref{sec:Rank-2}.
 We also used the  rank-3 update  given by direction  $d:= 2\mathbf{e}_{i_1} - \mathbf{e}_{i_2}-\mathbf{e}_{i_3}$, for all distinct  $i_1,i_2,i_3\in N$, such that $\bar x+ d$ is feasible, with optimal step size, as discussed in \S\ref{sec:Rank-3}. We note that other rank-3 updates could be used as well, but we confined our experiments to this one --- the rank-3 update corresponding to the
$d$ having minimum 1-norm. 
 We  initialized the local-search procedures using the  methods proposed in \S\ref{subsec:heur1}, namely, `Bin$(x^0)$', `Bin$(\hat{x}^0)$',  `Int$( x^0)$' and `Int$(\hat{x}^0)$'. The best solution obtained by all procedures serves as an incumbent solution for our B\&B algorithm. 
 The maximum time to compute it for an instance in our experiments was   0.15 second for \ref{prob01}  and 200 seconds for \ref{prob}. The binary case is much faster because we cannot apply  the rank-2 update with step size greater than 1 or the rank-3 update. 
 
We  implemented different versions of the B\&B algorithm, where the upper bounds are given either by the natural bound $\znatural$ or by the $\Gamma$-bound $\zgamma$\,.
We note that, for $s>m$, it would also be possible to use the upper bounds for the Data-Fusion problem \ref{datafusion} described in \S\ref{subsec:datafusionbound}, whenever the variables fixed at one at a given subproblem correspond to a rank-$m$ submatrix of $A$. 

We apply both procedures described below at each subproblem of the B\&B enumeration  in an attempt to reduce its size and make the algorithm \hbox{more efficient.}

\begin{itemize}
    \item `VBT': Compute the variable-bound tightening (VBT) inequalities \eqref{ineq1} and \eqref{ineq2} and include them  in the current subproblem, also fixing variables when possible. To compute the inequalities, we use  a dual solution for the continuous relaxation used to generate the upper bound for the current subproblem  and the best known lower bound for \ref{prob}. The dual solutions are computed by   the closed formulae 
    presented in Sections \ref{sec:natural_bound} and \ref{sec:dualgamma}.
    \item `LS': Apply the procedure `Round$(\hat x)$' 
     from the solution $\hat x$ of the continuous relaxation whenever the solution is not integer. Then apply local-search procedures from the integer solution obtained by Round$(\hat x)$, or from $\hat{x}$, in case it is already integer.  We first apply the local search `FI', and if it improves the initial solution, we also apply `FI$^+$' and `BI'. 
\end{itemize}

The algorithms proposed were coded in Julia v.1.10.0. To solve the convex relaxations \ref{cont_rel} and \ref{gamma_bound}\,, we employed Knitro 14.0.0 (Julia wrapper Knitro v0.14.2),
using 
\texttt{CONVEX = true}, for
the $\Gamma$-bound  \texttt{FEASTOL} = $10^{-9}$ (feasibility tolerance),
\texttt{OPTTOL} = $10^{-9}$ (optimality tolerance)  and for the natural bound we set \texttt{FEASTOL} = $5\cdot 10^{-7}$ (feasibility tolerance),
\texttt{OPTTOL} = $5\cdot 10^{-7}$ (optimality tolerance) ,
\texttt{ALGORITHM = 1} (Interior/Direct algorithm), 
\texttt{HESSOPT = 3} (Knitro computes a (dense) quasi-Newton SR1 Hessian). 
We prefer the strict tolerance of $10^{-9}$ that used in computing the 
$\Gamma$ bound, so as to increase the chance of identifying an integer solution; but for computing the natural bound, 
Knitro was not able to consistently converge at such a strict tolerance, so we relaxed this to $5\cdot 10^{-7}$.

To solve \ref{prob}, we employ  the B\&B algorithm in  Juniper \cite{juniper} (using the \texttt{StrongPseudoCost} branching rule,    and $10^{-5}$ as the tolerance to consider a value as integer). 
We ran our experiments on `zebratoo', a
32-core machine (running Windows Server 2022 Standard):
two Intel Xeon Gold 6444Y processors running at 3.60GHz, with 16 cores each, and 128 GB of memory.

Next, we separate  our numerical results into the binary case and the integer case. The elapsed-time limit to solve each instance in both cases was 5 hours.

\subsection{Results for \ref{prob01}}

To construct test-instances for \ref{prob01}, 
we randomly generate normally-distributed elements for the  $n\times m$ dimensional  matrices $A$ with 
rank $m$, with mean $0$ and standard deviation $1$. 
Because the variables are all binary, here we only use the rank-2 update given by direction  $d:= \mathbf{e}_{i_1}-\mathbf{e}_{i_2}$ and step size $k=1$ in our local searches. We also initialize them using only  the  methods  `Bin$(x^0)$' and `Bin$(\hat{x}^0)$' to compute the initial lower bound for the B\&B. 

Considering Remark \ref{rem:reducetoMESP},
we can reduce any instance of \ref{prob01}
to an instance of MESP, and then apply 
any upper bounding method for MESP.
 Specifically, the instance of MESP has 
$C:=I_n-UU^\top$, with the goal of choosing an order $n-s$ principle submatrix maximizing the determinant. 
An important upper bound for MESP was given by \cite{Kurt_linx}.
For $x\in[0,1]^n$ and $\gamma>0$, 
the \emph{(scaled) linx  bound} for this instance of MESP is
	\begin{align*}
	&\hypertarget{zlinxtarget}{\zlinxthing}=\max\! \left\{
	\textstyle{\frac{1}{2}}(\ldet \gamma C\Diag(y)C+\Diag(\mathbf{e}\!-\!y)-(n-s)\log \gamma) ~:~\right.\\
&\qquad\qquad\qquad\left.	 \mathbf{e}^{\top}y=n-s,~0\leq y\leq \mathbf{e} \right\}\!.
	\end{align*}
Another important upper bound for MESP is the factorization bound (see \cite{ChenFampaLee_Fact}), but this is precisely $\zgamma$\,.

In Fig. \ref{fig:compare_bounds}, we depict plots comparing  the  three bounds for \ref{prob01}, namely the natural bound, the $\Gamma$-bound, and the linx bound (for the associated MESP instance). For the linx bound, we compute the scaling parameter $\gamma$ as suggest in \cite{Kurt_linx}. In all plots, we show in the vertical axis the gap given by the difference between the upper bounds $\znatural$\,, $\zgamma$ and $\zlinx$\,, on the root subproblem of the B\&B enumeration tree, 
and the best lower bound computed by the local-search procedures.  
We illustrate in Fig. \ref{fig:compare_bounds}(a), a pattern that we have observed in the experiments with our test instances. When $n\!\lesssim\! 2m$, the $\Gamma$-bound is the best, when $n\!\gtrsim\! 2m$, the natural bound is the best, and when $n\!\approx\! 2m$, the bounds are very similar.  To better show this pattern, we set $s$ to the smallest possible value ($s\! = \! m$) in all instances in Fig. \ref{fig:compare_bounds}(a),  so we may vary $n(>\!s)$ in a larger interval. 
Note that the three plots cross when $n\approx 2m$. In Figs. \ref{fig:compare_bounds}(b,c), we can see the same behavior of the bounds for other values of $s$,  the natural bound is always the best for $100=:m\!<\! n/2$, and  the $\Gamma$-bound is always the best for $200=:m\!>\! n/2$. 

The instances of \ref{prob01} considered in Fig. \ref{fig:compare_bounds}(d) are  constructed distinctly from the other binary instances. The generation of the rows of the matrix $A$ is  based on the data ($\hat B\! \in\!\mathbb{S}_{++}^{117}$\,, $\hat G\! \in\!\mathbb{R}^{117\times 117}$)
for the 
IEEE benchmark instance with 118  buses of the phasor measurement unit (PMU) placement problem in power systems, that can be formulated as a Data-Fusion problem \ref{datafusion}. We consider the instance with small PMU standard deviation (see \cite[Sec. 5]{li2022d}). We decomposed $\hat B$ as  $\hat B:=H^\top H$ and randomly generated $G$ (in the same manner that $\hat G$ was generated in \cite{li2022d}), constructing an instance of the D-optimality problem \eqref{partdopt}, where we have all variables corresponding to the rows of $H$ fixed at one. We show in Thm. \ref{prop:comp}, that
the  $\Gamma$-bound  for \eqref{partdopt} is the same as 
the M-DDF-complementary bound $\zcg$ from \cite{li2022d}. Then, from  \cite[Fig. 4(b)]{li2022d}, we could already expect to have the $\Gamma$-bound better than the natural bound for $s=150$ (note that it corresponds to $s=33$ in \cite[Fig. 4(b)]{li2022d}). Our goal with the experiment reported in Fig. \ref{fig:compare_bounds}(d), is to compare our more general $\Gamma$-bound and  the natural bound  for the instances derived from \eqref{partdopt}, and the linx bound for the associated  MESP instance, but now unfixing the variables, one by one. Note that  we could not apply the M-DDF-complementary bound once variables are unfixed,
as it requires $H$ to be full column-rank.  
In constructing Fig. \ref{fig:compare_bounds}(d), we applied a monotone function,
specifically the fourth root, to the gaps, so
that we can more easily see the behavior when the gaps are both small and large. 
We see that the compared bounds are all close to zero when the number of fixed variables is up to 27, but when we increase the number from there, the $\Gamma$-bound becomes more and more superior. When we have 117 variables fixed, our $\Gamma$-bound becomes equivalent to the  M-DDF-complementary bound. With the experiment, we demonstrate that the generalization of the M-DDF-complementary bound that we propose for \ref{prob01} maintains its superiority with respect to the natural bound and to the linx bound. 

\begin{figure}[!ht]%
    \centering
    \subfloat[]{{\includegraphics[scale=0.25]{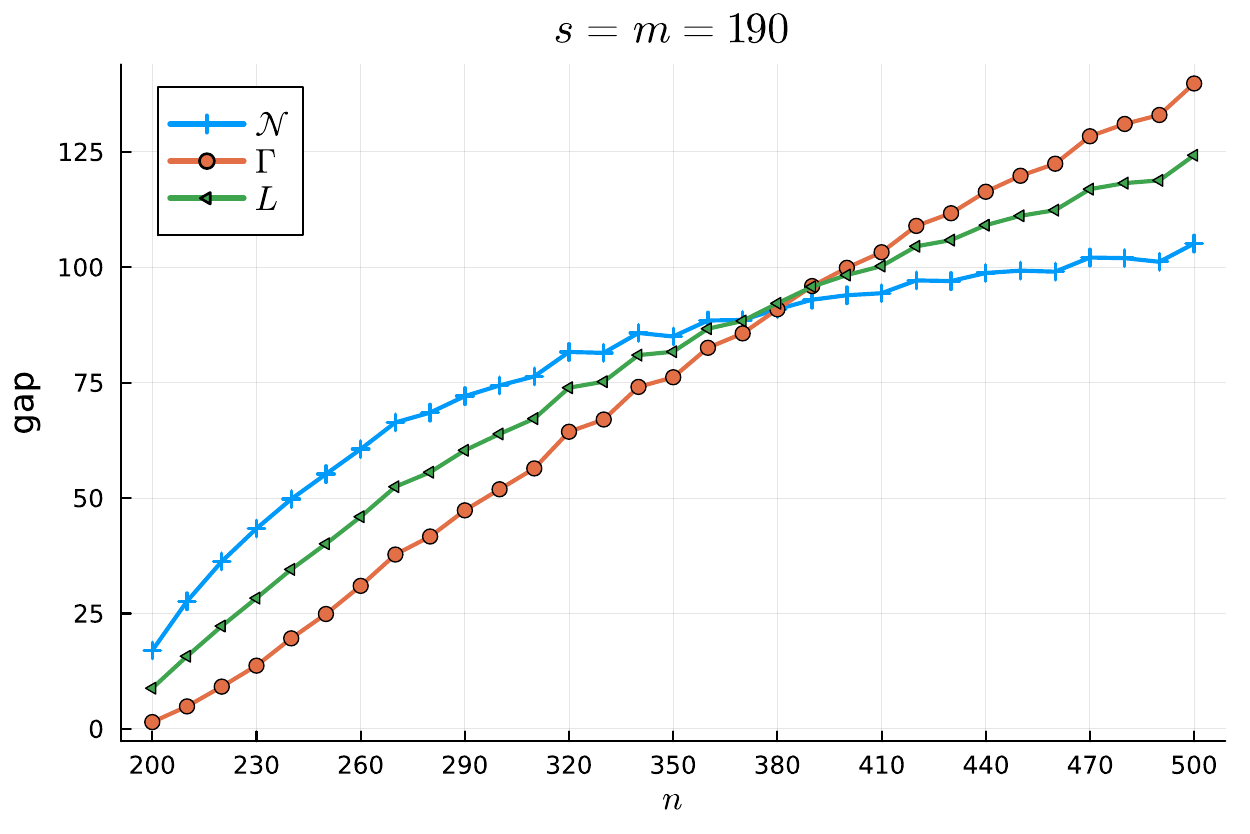} }}%
    ~
    \subfloat[]{{\includegraphics[scale=0.25]{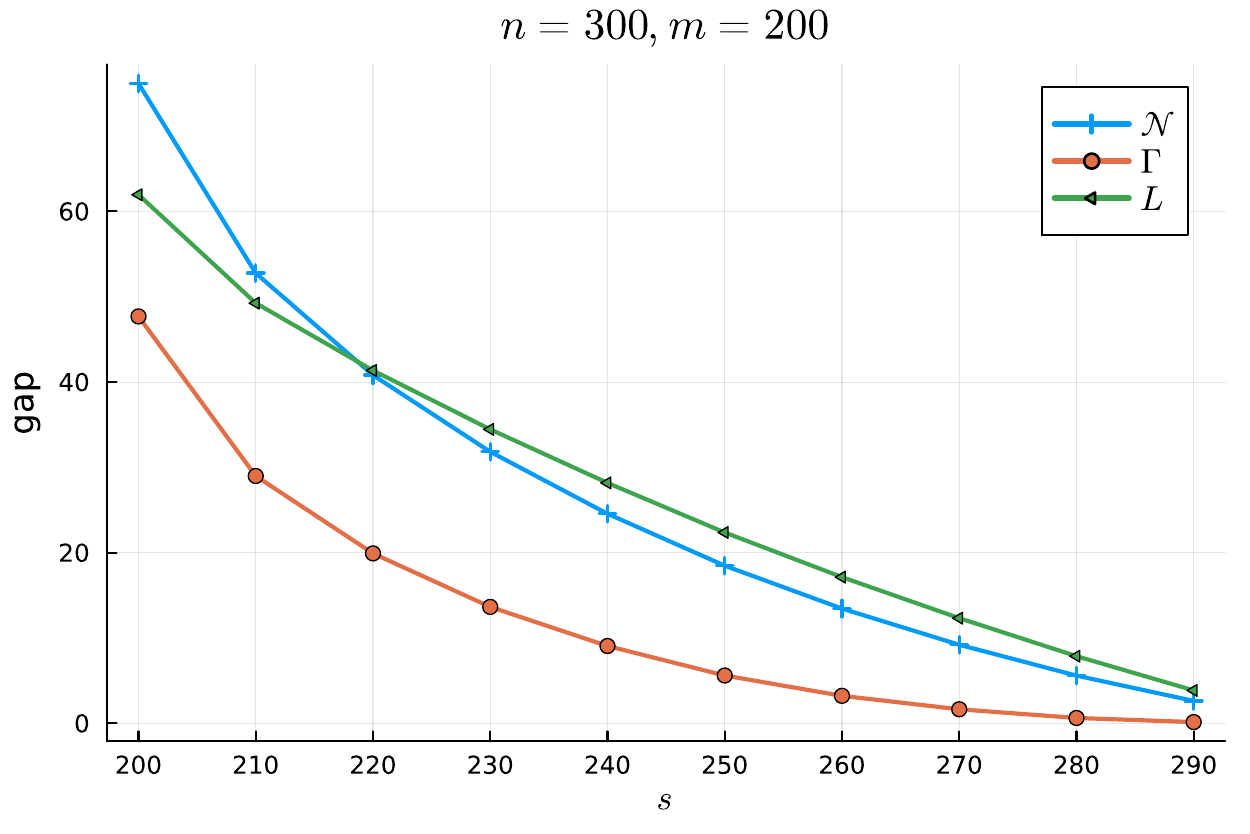} }}%
    \hfill
    \subfloat[]{{\includegraphics[scale=0.25]{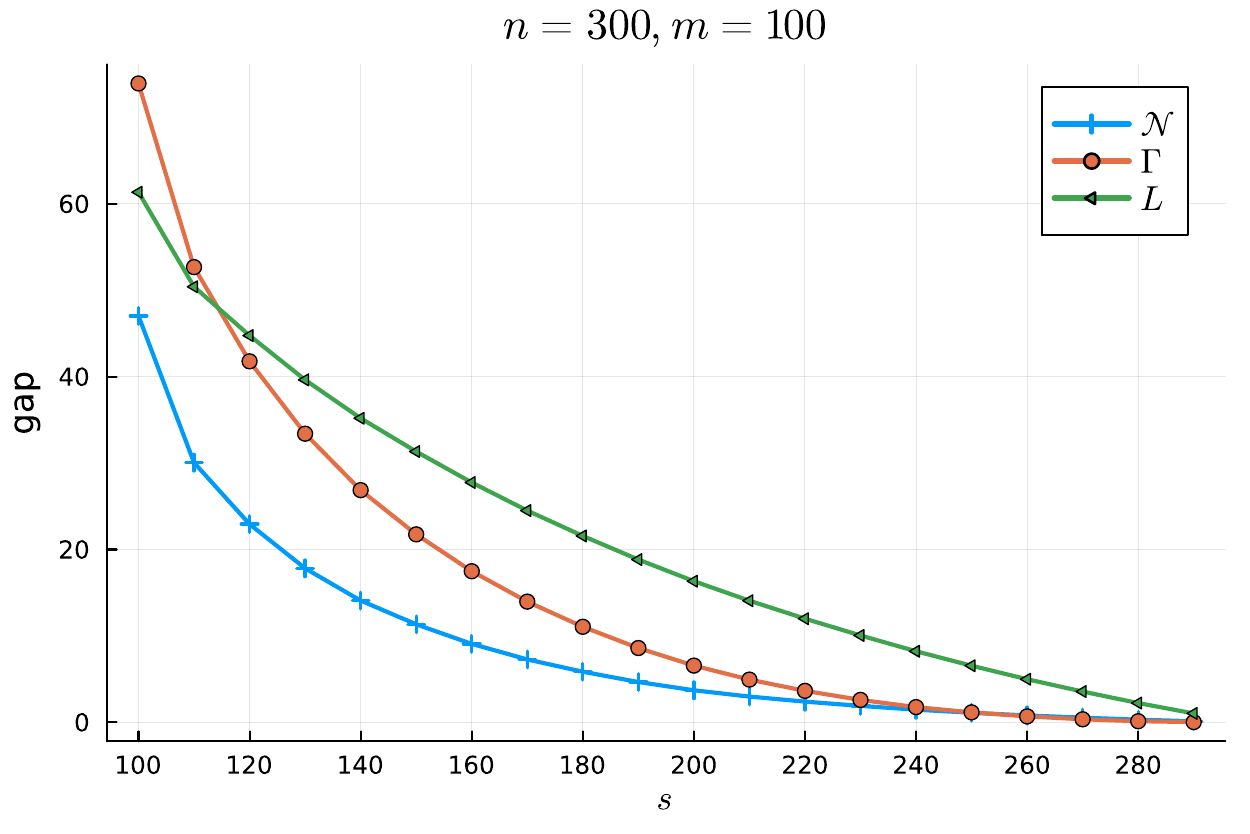} }}%
    ~
    \subfloat[]{{\includegraphics[scale=0.25]{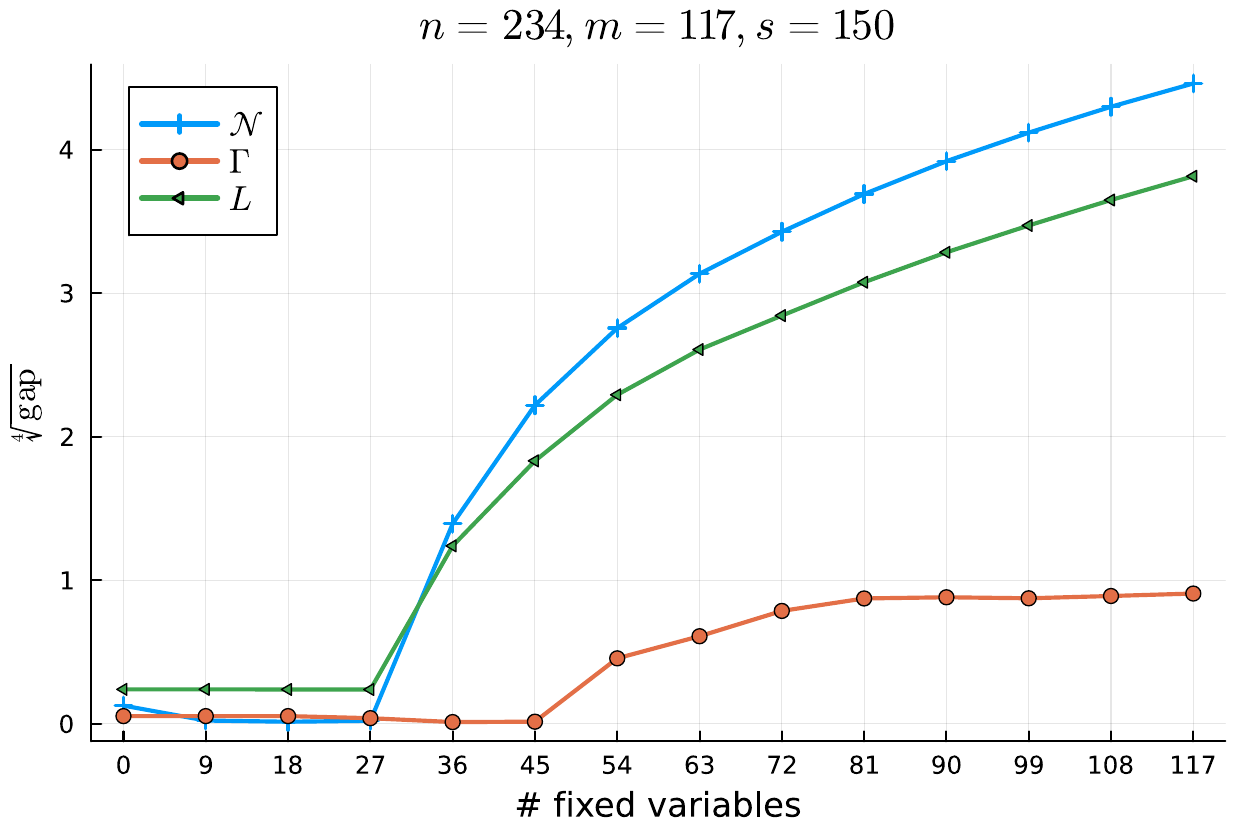} }}%
     \caption{Comparing the bounds for \ref{prob01}}%
    \label{fig:compare_bounds}%
\end{figure}

In Table \ref{tab:bb_bin_onebound}, we show statistics obtained when running the B\&B based either on the natural bound $\znatural$ or on the $\Gamma$-bound $\zgamma$\,.
 For each instance, we show from the left to the right, the parameters $n,m,s$;  
the `root gap', given by the difference between the upper bound on the root subproblem of the B\&B enumeration tree and the lower bound given by the best known solution for the instance, sometimes proved optimal; the `final gap', given by the difference between the final upper bound after running the B\&B and the same lower bound;  the elapsed `time' in seconds (`$*$' represents that the time limit was reached); the number of subproblems (`\raisebox{1.2pt}{{\scriptsize{$\#$}}}\! nodes'), that is, the number of subproblems solved; the number of  variables fixed by the VBT inequalities (`\raisebox{1.2pt}{{\scriptsize{$\#$}}}\! fixed variables'); 
and the number of times the local searches `LS' improve the incumbent solution during the execution of the B\&B (`\raisebox{1.2pt}{{\scriptsize{$\#$}}}\! LSI').  We note that besides showing the root gap for the two bounds used inside the B\&B, we also present the root gap for the linx bound ($L$).
Once more we see that the linx bound is always dominated by one of the other bounds even using the scaling parameter suggested in \cite{Kurt_linx}, and therefore, we did not apply it inside the B\&B. Also, it is important to note that the final (rigorous) gaps that
we report are based on dual-feasible solutions that we 
compute ourselves, while Juniper terminates when its
``mip gap'' based on an approximate primal-optimal solution is essentially zero. 

We can see that when $m=0.25n$, the natural bound gives  better results, when $m=0.5n$ the results are very similar for all bounds and when $m=0.75n$, the $\Gamma$-bound gives better results. The results confirm the analysis derived from Fig. \ref{fig:compare_bounds}(a--c).
 We  note that when the time limit for the B\&B is reached for both bounds (all instances with $n=60$, for example),  the final gap is much smaller for the natural bound when $m=0.25n$ and the final gap is much smaller for the $\Gamma$-bound when $m=0.75n$. 
 For the instance solved to optimality by both methods, with very similar bounds at the root subproblem ($n=40,m=20$), we see that both the number of subproblems  and the elapsed time are smaller  for the $\Gamma$-bound, showing that it was stronger than the natural bounds on the subproblems. For instances solved to optimality  with $m=0.5n$ ($n=20,30,40$), for which the two bounds are competitive at the root subproblem, we see that more variables are fixed when more subproblems are evaluated, showing how effective the VBT inequalities are for these instances. On the other hand, the local-search procedure only improves the current best solution a few times. Nevertheless, as we will see in the Table \ref{tab:feature_bin}, the procedure still has a good overall impact on the B\&B. Concerning the number of variables fixed in  columns ($\mathcal{N}$ and $\Gamma$), we note that they are, in fact, the number of  variables fixed in $\mathcal{N}_{{\mbox{\protect\tiny D-Opt}}}$ and $\Gamma_{{\mbox{\protect\tiny D-Opt(0/1)}}}$, respectively, and also that they are all fixed at zero. Therefore, in column $\mathcal{N}$, we show the number of variables in D-Opt(0/1) that are fixed at zero, and in column $\Gamma$, we show the number of variables in D-Opt(0/1) that are fixed at one (because the variable $y$ in $\Gamma_{{\mbox{\protect\tiny D-Opt(0/1)}}}$ is the complement of the variable $x$ in $\mathcal{N}_{{\mbox{\protect\tiny D-Opt}}}$). We note that fixing is driven by quality of  bounds and also by sparsity in relaxations.  When $s$ is small (in the table $n/4$), the natural bound is better than the $\Gamma$-bound and when $s$ is large (in the table $3n/4$), the $\Gamma$-bound is better than the natural bound. 
For small $s$, when solving $\mathcal{N}_{{\mbox{\protect\tiny D-Opt}}}$ we get many $x$ variables at value zero and relatively few positive (and so few at value one); so we see a considerable amount of fixing for these instances, and only fixing at zero.  By the same reasoning, for large $s$, when solving $\Gamma_{{\mbox{\protect\tiny D-Opt(0/1)}}}$\,,  
many $y$ variables are fixed at zero, which translates to many $x$ variables fixed at one. 
For the between cases (in the table $n/2$),
both bounds offer some significant fixing of $x$ variables, 
 with some variables fixed at zero and some at one.

\begin{table}[!ht]
\scriptsize
\centering
\begin{tabular}{l|rrr|rr|rr|rr|rr|rr}
\multicolumn{1}{c|}{$n,m$$\vphantom{\Sigma^{I^I}}$} & \multicolumn{3}{c|}{\scriptsize root gap}                                                   & \multicolumn{2}{c|}{\scriptsize gap}                              & \multicolumn{2}{c|}{\scriptsize time (sec)}                       & \multicolumn{2}{c|}{\raisebox{1.2pt}{{\scriptsize{$\#$}}}\! nodes} & \multicolumn{2}{c|}{\raisebox{1.2pt}{{\scriptsize{$\#$}}}\! fixed variables} & \multicolumn{2}{c}{\raisebox{1.2pt}{{\scriptsize{$\#$}}}\! LSI}  \\
\multicolumn{1}{c|}{\scriptsize $s\!=\! m$}         & \multicolumn{1}{c}{$\mathcal{N}$} & \multicolumn{1}{c}{$\Gamma$} & \multicolumn{1}{c|}{$L$} & \multicolumn{1}{c}{$\mathcal{N}$} & \multicolumn{1}{c|}{$\Gamma$} & \multicolumn{1}{c}{$\mathcal{N}$} & \multicolumn{1}{c|}{$\Gamma$} & \multicolumn{1}{c}{$\mathcal{N}$}  & \multicolumn{1}{c|}{$\Gamma$} & \multicolumn{1}{c}{$\mathcal{N}$}       & \multicolumn{1}{c|}{$\Gamma$}      & \multicolumn{1}{c}{$\mathcal{N}$} & \multicolumn{1}{c}{$\Gamma$} \\ \hline
20,5                                                & 0.15                              & 1.67                         & 0.87                     & 0.00                              & 0.00                          & 5.3                               & 7.8                           & 36                                 & 465                           & 136                                     & 1                                  & 0                                 & 0                            \\
20,10                                               & 0.83                              & 0.80                         & 0.80                     & 0.00                              & 0.00                          & 1.0                               & 0.8                           & 126                                & 154                           & 32                                      & 39                                 & 1                                 & 1                            \\
20,15                                               & 1.00                              & 0.21                         & 0.58                     & 0.00                              & 0.00                          & 0.9                               & 0.2                           & 119                                & 32                            & 0                                       & 139                                & 0                                 & 0                            \\ \hline
30,7                                                & 0.43                              & 2.72                         & 1.50                     & 0.00                              & 0.00                          & 0.7                               & 24.7                          & 83                                 & 4537                          & 134                                     & 12                                 & 1                                 & 1                            \\
30,15                                               & 2.10                              & 1.99                         & 2.07                     & 0.00                              & 0.00                          & 74.0                              & 40.1                          & 8562                               & 7458                          & 2180                                    & 1937                               & 1                                 & 1                            \\
30,22                                               & 2.57                              & 0.92                         & 1.67                     & 0.01                              & 0.00                          & 33.9                              & 1.3                           & 3792                               & 256                           & 225                                     & 219                                & 1                                 & 0                            \\ \hline
40,10                                               & 1.03                              & 4.06                         & 2.50                     & 0.00                              & 0.00                          & 14.8                              & 833.4                         & 1742                               & 131917                        & 1384                                    & 5741                               & 1                                 & 1                            \\
40,20                                               & 2.92                              & 2.93                         & 2.96                     & 0.01                              & 0.01                          & 1974.4                            & 1012.4                        & 173700                             & 159921                        & 70533                                   & 60676                              & 1                                 & 1                            \\
40,30                                               & 4.26                              & 1.11                         & 2.59                     & 0.01                              & 0.01                          & 2339.0                            & 7.9                           & 222288                             & 1712                          & 451                                     & 1530                               & 1                                 & 3                            \\ \hline
50,12                                               & 1.12                              & 5.32                         & 3.04                     & 0.00                              & 0.62                          & 16.3                              & *                             & 1780                               & 925109                        & 2498                                    & 4739                               & 1                                 & 2                            \\
50,25                                               & 4.23                              & 4.29                         & 4.31                     & 1.13                              & 1.07                          & *                                 & *                             & 671867                             & 835819                        & 27851                                   & 52773                              & 1                                 & 1                            \\
50,37                                               & 5.30                              & 1.44                         & 3.22                     & 0.85                              & 0.01                          & *                                 & 33.6                          & 619813                             & 5929                          & 2343                                    & 7525                               & 0                                 & 0                            \\ \hline
60,15                                               & 2.72                              & 8.60                         & 5.60                     & 0.72                              & 4.33                          & *                                 & *                             & 674021                             & 797115                        & 1706746                                 & 0                                  & 2                                 & 1                            \\
60,30                                               & 6.02                              & 5.83                         & 6.04                     & 3.04                              & 2.80                          & *                                 & *                             & 624285                             & 784199                        & 0                                       & 0                                  & 1                                 & 1                            \\
60,45                                               & 7.89                              & 2.44                         & 5.03                     & 3.68                              & 0.11                          & *                                 & *                             & 600631                             & 1059441                       & 0                                       & 2228236                            & 1                                 & 1                           
\end{tabular}
\caption{B\&B for \ref{prob01} using either the $\Gamma$-bound or the natural bound}\label{tab:bb_bin_onebound}
\end{table}

In Table \ref{tab:feature_bin}, to analyse the impact of our enhancement procedures on the B\&B algorithm, we show  results for different versions of the B\&B, when applied to the two most difficult instances from Table \ref{tab:bb_bin_onebound}, for which the natural bound and the $\Gamma$-bound  are competitive (with  $(n,m)$ given by $(50,25)$ and $(60,30)$).
 Neither instance could  be solved to optimality by any version of the B\&B in the time limit. Each row in the table corresponds to a version of the B\&B, where the procedures used to enhance it are identified in the first column. In the other columns, we  present results for the two versions of the  B\&B, that are based either on the natural bound (`$\mathcal{N}$') or on the $\Gamma$-bound (`$\Gamma$').   We show  the final gaps obtained  and number of subproblems in the B\&B tree,  for  each instance. We see that the VBT inequalities often increase  the number of subproblems solved in the time limit, showing that tightening variable bounds is usually effective in speeding up the resolution of subproblems. On the other hand, the number of subproblems decreases when we add  `LS' to `VBT', showing the overhead  introduced by the local search. Nevertheless, we see that `LS' is important to obtain the best gaps for the natural bound  and both procedures are important  to obtain  the smallest gaps for the $\Gamma$-bound these instances. 

\begin{table}[!ht]
\centering
\begin{tabular}{l|rrrr|rrrr}
           & \multicolumn{4}{c|}{gap}                                                                                                                  & \multicolumn{4}{c}{\# nodes}                                                                                                             \\ \cline{2-9} 
procedures & \multicolumn{2}{c|}{50,25}                                          & \multicolumn{2}{c|}{60,30}                                          & \multicolumn{2}{c|}{50,25}                                          & \multicolumn{2}{c}{60,30}                                          \\
in B\&B    & \multicolumn{1}{c}{$\mathcal{N}$} & \multicolumn{1}{c|}{${\Gamma}$} & \multicolumn{1}{c}{$\mathcal{N}$} & \multicolumn{1}{c|}{${\Gamma}$} & \multicolumn{1}{c}{$\mathcal{N}$} & \multicolumn{1}{c|}{${\Gamma}$} & \multicolumn{1}{c}{$\mathcal{N}$} & \multicolumn{1}{c}{${\Gamma}$} \\ \hline
none       & 1.132                             & \multicolumn{1}{r|}{1.114}      & 3.680                             & 3.481                           & 698617                            & \multicolumn{1}{r|}{769775}     & 679901                            & 751231                         \\
VBT        & 1.132                             & \multicolumn{1}{r|}{1.114}      & 3.680                             & 3.479                           & 698961                            & \multicolumn{1}{r|}{769373}     & 680225                            & 757407                         \\
LS         & 1.127                             & \multicolumn{1}{r|}{1.107}      & 3.034                             & 2.833                           & 673999                            & \multicolumn{1}{r|}{746061}     & 629847                            & 699261                         \\
VBT+LS     & 1.127                             & \multicolumn{1}{r|}{1.072}      & 3.036                             & 2.802                           & 671867                            & \multicolumn{1}{r|}{835819}     & 624285                            & 784199                        
\end{tabular}
\caption{Effect of enhancements on B\&B}
\label{tab:feature_bin}
\end{table}

 We constructed other instances of \ref{prob01}
 based on some real data: the ``Regensburg Pediatric Appendicitis'' data set at the
 University of California Irvine (UCI) Machine Learning Repository; see \url{https://archive.ics.uci.edu/dataset/938/regensburg+pediatric+appendicitis} and
 \cite{Appendicitis}. 
 We took the rows corresponding to females 
between the ages of 11 and 14, deleting columns with more than 
40 values missing, and then deleting rows with any missing values.
For the categorical variables,  we used ``one-hot encoding''. 
This left us with 71 rows and 40 columns.  

 We show in Table \ref{tab:appendicitis}
results obtained for $s\in\{40,45,\ldots,65\}$ and we see that the $\Gamma$-bound is superior to the natural bound and to the linx bound at the root subproblem of the B\&B, for all values of $s$. Thus, we apply the B\&B based on the $\Gamma$-bound for these instances. The difficulty of the problems decreases as we increase $s$. We could solve the instance to optimality for all  $s>50$, and we see a significant reduction on the gap for the smaller $s$.

\begin{table}[!ht]
\centering
\scriptsize
\begin{tabular}{l|rrr|r|r|r}
                                     & \multicolumn{3}{c|}{\scriptsize root gap}                                                   & \multicolumn{1}{c|}{\scriptsize gap} & \multicolumn{1}{c|}{\scriptsize time (sec)} & \multicolumn{1}{c}{\raisebox{1.2pt}{{\scriptsize{$\#$}}}\! nodes}   \\
\multicolumn{1}{c|}{\scriptsize $s$} & \multicolumn{1}{c}{$\mathcal{N}$} & \multicolumn{1}{c}{$\Gamma$} & \multicolumn{1}{c|}{$L$} & \multicolumn{1}{c|}{$\Gamma$}        & \multicolumn{1}{c|}{$\Gamma$}               & \multicolumn{1}{c}{$\Gamma$}                                                                        \\ \hline
40                                   & 7.90                              & 6.55                         & 7.33                     & 3.72                                 & *                                           & 653817                                                                                                                           \\
45                                   & 4.25                              & 3.63                         & 6.24                     & 1.98                                 & *                                           & 661791                                                                                                                           \\
50                                   & 2.24                              & 1.79                         & 4.40                     & 0.56                                 & *                                           & 684555                                                                                                                           \\
55                                   & 1.19                              & 0.90                         & 2.94                     & 0.01                                 & 2353.3                                      & 271436                                                                                                                            \\
60                                   & 0.52                              & 0.42                         & 1.67                     & 0.01                                 & 10.3                                        & 1514                                                                                                                              \\
65                                   & 0.13                              & 0.11                         & 0.66                     & 0.01                                 & 0.3                                         & 43                                                                                                                               
\end{tabular}
\caption{Regensburg pediatric-appendicitis instances}
\label{tab:appendicitis} 
\end{table}

We constructed another set of instances, meant as a challenging combinatorial instance with symmetry.
For $t>1$, we make a $\binom{t}{2}\times t$ matrix that is the transpose of the $0/\pm1$-valued node-arc incidence matrix of an orientation of the edges of a complete undirected graph on $t$ vertices. 
The rank of such a matrix is well known to be $t-1$, and removing any column, which we do,
gives us a $\binom{t}{2}\times (t-1)$ 
matrix with rank $t-1$. The \ref{prob01} problem for such a matrix aims to select $s$ edges so as to maximize the number of spanning trees for that chosen subgraph
(this directly follows from the Cauchy-Binet formula and total unimodularity of node-arc incidence matrices). Our results are in Table \ref{tab:graphs}.
Because of the symmetries of the complete graph induced by permutations of its vertices, 
typically we will see a huge number of alternative optimal solutions, making these very hard instances for a B\&B that seeks to prove optimality, despite the good quality of the natural bound, for which we see root gaps less than one for most instances. In our experiments, we set $t=20$, and so our matrix is 
$190\times 19$. 

\begin{table}[!ht]
\centering
\scriptsize
\begin{tabular}{l|rrr|r|r}
\multicolumn{1}{c|}{}                & \multicolumn{3}{c|}{\scriptsize root gap}                                                   & \multicolumn{1}{c|}{\scriptsize gap} & \multicolumn{1}{c}{\raisebox{1.2pt}{{\scriptsize{$\#$}}}\! nodes} \\
\multicolumn{1}{c|}{\scriptsize $s$} & \multicolumn{1}{c}{$\mathcal{N}$} & \multicolumn{1}{c}{$\Gamma$} & \multicolumn{1}{c|}{$L$} & \multicolumn{1}{c|}{$\mathcal{N}$}   & \multicolumn{1}{c}{$\mathcal{N}$}                                 \\ \hline
19                                   & 10.17                             & 35.91                        & 23.43                    & 9.50                                 & 283245                                                            \\
38                                   & 2.19                              & 16.76                        & 15.06                    & 1.99                                 & 154523                                                            \\
57                                   & 1.09                              & 9.95                         & 11.22                    & 0.99                                 & 75083                                                             \\
76                                   & 0.63                              & 6.03                         & 8.53                     & 0.58                                 & 52649                                                             \\
95                                   & 0.36                              & 3.52                         & 6.42                     & 0.32                                 & 47231                                                             \\
114                                  & 0.27                              & 1.97                         & 4.77                     & 0.24                                 & 54083                                                             \\
133                                  & 0.18                              & 0.95                         & 3.34                     & 0.16                                 & 62407                                                             \\
152                                  & 0.11                              & 0.35                         & 2.10                     & 0.10                                 & 95013                                                             \\
171                                  & 0.05                              & 0.05                         & 1.00                     & 0.05                                 & 186513                                                           
\end{tabular}
\caption{Graph instances}\label{tab:graphs}
\end{table}

\subsection{Results for \ref{prob}}
To construct test instances for \ref{prob}, we used the Matlab function \texttt{sprand} to  generate random $n\times m$-dimensional  matrices $A$ with 
rank $m$, and the Matlab function \texttt{randi} to generate  random $n$-dimensional vectors
$u$, with integer components between 1 and 10. We consider $m=0.25 n$,  $s\in\{0.50n,0.75n,n\}$, and $l=0$.

In Table \ref{tab:results_integer}, we show statistics obtained when running the B\&B for \ref{prob}, based on the natural bound $\znatural$\,. We also experimented with the  $\Gamma$-bound $\zgamma$ computed by reformulating \ref{prob} as \ref{prob01}
 (see Remark \ref{rem:gammafordopt}), however the results obtained with the natural bound were superior. The superiority of the natural bound is coherent with the results observed for \ref{prob01}, because as we repeat rows of $A$ in the reformulation of \ref{prob},  we end up with $m\ll 0.5 n$ in all instances. 

\begin{table}[!ht]
\centering
\begin{tabular}{c|r|r|r|r|r|r|r|rr}
\scriptsize $n,m,s$ & \multicolumn{1}{c|}{\scriptsize root gap} & \multicolumn{1}{c|}{\scriptsize gap} & \multicolumn{1}{c|}{\scriptsize time (sec)} & \multicolumn{1}{c|}{\scriptsize \# nodes} & \multicolumn{1}{c|}{\scriptsize \# cuts} & \multicolumn{1}{c|}{\scriptsize \# fixed} & \multicolumn{1}{c|}{\scriptsize \# LSI} & \multicolumn{1}{c}{\scriptsize \# k\_bin} & \multicolumn{1}{c}{\scriptsize \# k\_int} \\ \hline
20,5,10             & 0.029                                     & 0.001                                & 7                                           & 121                                       & 940                                      & 146                                       & 0                                       & 12                                        & 0                                         \\
40,10,20            & 0.030                                     & 0.000                                & 6                                           & 139                                       & 3106                                     & 404                                       & 0                                       & 17                                        & 1                                         \\
60,15,30            & 0.029                                     & 0.002                                & 26                                          & 407                                       & 4649                                     & 734                                       & 1                                       & 108                                       & 0                                         \\
80,20,40            & 0.130                                     & 0.002                                & 1893                                        & 14577                                     & 73312                                    & 20053                                     & 3                                       & 10469                                     & 1                                         \\
100,25,50           & 0.162                                     & 0.002                                & 16085                                       & 157916                                    & 391535                                   & 137283                                    & 2                                       & 95294                                     & 26                                        \\
120,30,60           & 0.044                                     & 0.002                                & 146                                         & 327                                       & 8665                                     & 724                                       & 1                                       & 119                                       & 0                                         \\
140,35,70           & 0.215                                     & 0.099                                & *                                           & 23963                                     & 322460                                   & 77137                                     & 7                                       & 23817                                     & 5                                         \\
160,40,80           & 0.069                                     & 0.013                                & *                                           & 17219                                     & 100813                                   & 30471                                     & 3                                       & 17146                                     & 2                                         \\
180,45,90           & 0.230                                     & 0.151                                & *                                           & 10421                                     & 72156                                    & 2233                                      & 6                                       & 10358                                     & 9                                         \\
200,50,100          & 0.121                                     & 0.077                                & *                                           & 11911                                     & 168641                                   & 21791                                     & 5                                       & 11855                                     & 2   \\
\hline
20,5,15             & 0.024                                     & 0.001                                & 10                                          & 553                                       & 1197                                     & 275                                       & 0                                       & 192                                       & 0                                         \\
40,10,30            & 0.018                                     & 0.000                                & 13                                          & 877                                       & 3972                                     & 744                                       & 0                                       & 371                                       & 15                                        \\
60,15,45            & 0.023                                     & 0.002                                & 16                                          & 747                                       & 5072                                     & 1103                                      & 2                                       & 308                                       & 0                                         \\
80,20,60            & 0.084                                     & 0.003                                & 7100                                        & 143707                                    & 652911                                   & 233928                                    & 3                                       & 104183                                    & 309                                       \\
100,25,75           & 0.083                                     & 0.001                                & 12291                                       & 305767                                    & 857581                                   & 357241                                    & 2                                       & 227691                                    & 338                                       \\
120,30,90           & 0.029                                     & 0.001                                & 699                                         & 6063                                      & 26660                                    & 6592                                      & 3                                       & 2387                                      & 8                                         \\
140,35,105          & 0.107                                     & 0.037                                & *                                           & 36457                                     & 449310                                   & 120977                                    & 3                                       & 36188                                     & 336                                       \\
160,40,120          & 0.047                                     & 0.022                                & *                                           & 16093                                     & 261428                                   & 55565                                     & 12                                      & 16022                                     & 0                                         \\
180,45,135          & 0.120                                     & 0.086                                & *                                           & 11183                                     & 65904                                    & 1874                                      & 3                                       & 11126                                     & 30                                        \\
200,50,150          & 0.077                                     & 0.053                                & *                                           & 9037                                      & 50559                                    & 4346                                      & 8                                       & 8990                                      & 17       \\
\hline
20,5,20             & 0.015                                     & 0.001                                & 14                                          & 791                                       & 1281                                     & 350                                       & 0                                       & 274                                       & 0                                         \\
40,10,40            & 0.010                                     & 0.000                                & 23                                          & 733                                       & 3998                                     & 729                                       & 0                                       & 157                                       & 6                                         \\
60,15,60            & 0.008                                     & 0.003                                & 11                                          & 83                                        & 4136                                     & 666                                       & 1                                       & 12                                        & 0                                         \\
80,20,80            & 0.041                                     & 0.004                                & 3103                                        & 21257                                     & 103294                                   & 41154                                     & 3                                       & 15318                                     & 14                                        \\
100,25,100          & 0.058                                     & 0.008                                & *                                           & 86743                                     & 659840                                   & 264737                                    & 2                                       & 84706                                     & 2017                                      \\
120,30,120          & 0.027                                     & 0.001                                & *                                           & 108211                                    & 147450                                   & 51084                                     & 2                                       & 57778                                     & 17                                        \\
140,35,140          & 0.060                                     & 0.028                                & *                                           & 19739                                     & 245671                                   & 49117                                     & 3                                       & 19574                                     & 221                                       \\
160,40,160          & 0.034                                     & 0.018                                & *                                           & 16165                                     & 228384                                   & 53815                                     & 7                                       & 16092                                     & 9                                         \\
180,45,180          & 0.073                                     & 0.053                                & *                                           & 10493                                     & 50966                                    & 2548                                      & 7                                       & 10424                                     & 2                                         \\
200,50,200          & 0.053                                     & 0.040                                & *                                           & 9023                                      & 55572                                    & 3683                                      & 7                                       & 8974                                      & 12                                       
\end{tabular}
\caption{B\&B for \ref{prob} using  the natural bound}\label{tab:results_integer}
\end{table}

Besides the statistics shown in Table \ref{tab:bb_bin_onebound} for \ref{prob01}, we also show in Table \ref{tab:results_integer},  the number of  times VBT inequalities  tightened the bounds of the variables (`\raisebox{1.2pt}{{\scriptsize{$\#$}}}\! cuts') -- this number includes the number of variables fixed by the VBT inequalities (`\raisebox{1.2pt}{{\scriptsize{$\#$}}}\! fixed'), and the number of times the local search procedures improve its initial solution using the rank-2 update with step size $|k|=1$  (`\raisebox{1.2pt}{{\scriptsize{$\#$}}}\! k\_bin') and with step size $|k|>1$  (`\raisebox{1.2pt}{{\scriptsize{$\#$}}}\! k\_int'). 
We do not apply the rank-3 update inside the B\&B procedure because it is too costly.

 In the time limit, we were able to solve larger instances (with $n$ up to 120)  of \ref{prob} than of \ref{prob01}, as reported in Table \ref{tab:bb_bin_onebound}. Moreover, for the instances not solved to optimality, we got smaller gaps than the ones reported for the non solved instances of \ref{prob01}. We see that the VBT inequalities are effective for tightening the bounds of the variables and also for fixing them. Finally, we note that although the step size $|k|=1$ is optimal in most cases for improving the initial solution of the local search procedures, we also have cases where  $|k|>1$ is the best option. 

\section{Conclusion}
Our numerical experiments indicate promising directions to investigate in order to improve the efficiency of our B\&B algorithm to solve  \ref{prob}. One possible approach,  for example, is to use  different bounds for subproblems. We experimented with a version of our B\&B algorithm where both bounds, the natural bound and the $\Gamma$-bound, were computed at every subproblem, and the best one was used. For instances of \ref{prob01} with $m=0.5n$, for which both bounds were very competitive at the root subproblem of the B\&B tree, the total number of subproblems required to solve the instances to optimality  decreased with respect to the B\&B algorithms based on each bound. However, computing both bounds at every subproblem was too expensive and the overall resolution time increased. An alternative would be to compute both bounds only at every $p$ subproblems, for a given $p$, and pick the best bound at the subproblem to be the bound computed for all subproblems from that point. This is a standard approach to try to accelerate the convergence of  B\&B algorithms, and, in fact, all of the bounds for the Data-Fusion problem could also be taken into consideration once the variables fixed at value 1 at a subproblem select a full-rank submatrix of the input matrix $A$. It is interesting to note that during the experiments with our B\&B algorithm, where both the natural bound and the $\Gamma$-bound were computed at every subproblem, we observed that when one bound  was much better than the other at the root subproblem of the B\&B tree, then the same bound was the best one for the great majority of the subproblems solved. 

With regard to our analysis 
of integer line searches for rank 2 and 3 (in \S\ref{sec:intlinesearch}),
we could go further, investigating higher-rank matrices $V$; in particular 
ranks 4 and 5 could be amenable to closed-form solution
(due to the solvability of cubics and quartics by radicals). 
We leave this as well as more sophisticated ideas for 
finding good search directions for future research.

\section*{Acknowledgments}

We thank the three anonymous referees for their comments, which greatly improved our presentation.

 G. Ponte was supported in part by CNPq GM-GD scholarship 161501/2022-2. M. Fampa was supported in part by CNPq grants 305444/2019-0 and 434683/2018-3.  J. Lee was supported in part by AFOSR grant FA9550-22-1-0172. This work is partially based on work supported by the 
National Science Foundation under Grant DMS-1929284 while 
the authors were in residence at the Institute for Computational and Experimental Research in 
Mathematics (ICERM) in Providence, RI, during the Discrete Optimization program.

\bibliography{Dopt_FLP}

\section*{Appendix}

In this appendix, we present deferred proofs.

\begin{proof}[of Lem. \ref{lem:indefinite}]
Here, we denote by $\lambda_1(\cdot)\geq\lambda_2(\cdot)\geq \cdots\geq \lambda_m(\cdot)$, all the eigenvalues of an $m\times m$ given matrix. 
    We assume that
    \begin{align*}
        &\lambda_1(V^+)\geq \lambda_2(V^+)\geq \cdots\geq \lambda_{a}(V^+)>0=\lambda_{a+1}(V^+)=\cdots =\lambda_m(V^+),\\
        &\lambda_1(V^-)=\lambda_2(V^-)=\cdots=\lambda_{b}(V^-)=0>\lambda_{b+1}(V^-)\geq \cdots\geq \lambda_m(V^-).
    \end{align*}
    Then $a=r^+$ and $m-b=r^-$. Note that $a+m-b=\mbox{rank}(V)\leq m$ and, therefore, $a\leq b$. From \cite[Thm. 4.3.7]{HJBook},
    we have that 
    \[
    \lambda_{b}(V) \geq \lambda_{b}(V^-) + \lambda_{m}(V^+) = 0.
    \]
    Therefore, $\lambda_\ell(V)\geq0$, for all $\ell \leq b$. As rank$(V)=m-b+a$, then $\lambda_\ell(V) >  0$ for all $\ell \leq a$.  We also have that
    \[
    \lambda_{a+1}(V) \leq \lambda_{1}(V^-) + \lambda_{a+1}(V^+) =0.
    \]
    Therefore, $\lambda_\ell(V)\leq0$, for all $\ell \geq a+1$. As rank$(V)=m-b+a$, then $\lambda_\ell(V) < 0$ for all $\ell \geq b+1$.
    
    As $V$ has exactly $a+m-b$ nonzero eigenvalues, we conclude that $V$ has $a$ strictly positive eigenvalues and $m-b$ strictly negative eigenvalues. \qed
\end{proof}

\begin{proof}[of Prop. \ref{cor:indef}]
$V^+$ is positive semidefinite with rank at most $r^+$.
$V^-$ is negative semidefinite with rank at most $r^-$. 
Therefore, we  have \[r^++r^-=\mbox{rank}(V)\leq \mbox{rank}(V^+) + \mbox{rank}(V^-)\leq r^++r^-.
\]
 Then $\mbox{rank}(V^+)=r^+$ and $ \mbox{rank}(V^-)=r^-$. 

 From Lem. \ref{lem:indefinite}, we have that $V$ is indefinite and has $r^+$ positive eigenvalues and $r^-$ negative eigenvalues. The result then follows from Sylvester's law of inertia (see \cite[Thm. 1.5]{SchurBook} and  \cite[Thm. 4.5.8]{HJBook}). \qed
\end{proof}

\begin{proof}[of Thm. \ref{kpm}]
We note that $a>0$ because $\lambda_1>0>\lambda_2\geq \lambda_3$\,.

We see that $\bar{k}_{-}\,,\bar{k}_{+}\in\mathbb{R}$ because 
\[
\begin{array}{ll}
    b^2-4ac &= 4(\lambda_1\lambda_2+\lambda_1\lambda_3+\lambda_2\lambda_3)^2- 12\lambda_1\lambda_2\lambda_3( \lambda_1+\lambda_2+\lambda_3)\\ 
    &=4\left((\lambda_1^2\lambda_2^2+\lambda_1^2\lambda_3^2+\lambda_2^2\lambda_3^2 + 2\lambda_1^2\lambda_2\lambda_3+ 2\lambda_1\lambda_2^2\lambda_3+ 2\lambda_1\lambda_2\lambda_3^2)\right.\\
    & \quad\quad \left. - 3(\lambda_1^2\lambda_2\lambda_3 + \lambda_1\lambda_2^2\lambda_3 + \lambda_1\lambda_2\lambda_3^2)\right)\\
    &=4\left(\lambda_1^2\lambda_2^2+\lambda_1^2\lambda_3^2+\lambda_2^2\lambda_3^2 -\lambda_1^2\lambda_2\lambda_3- \lambda_1\lambda_2^2\lambda_3-\lambda_1\lambda_2\lambda_3^2\right) \\
    &=4\left( \lambda_1\lambda_2^2(\lambda_1-\lambda_3) + \lambda_1^2\lambda_3(\lambda_3-\lambda_2) +\lambda_3^2\lambda_2(\lambda_2- \lambda_1)\right) \geq 0.
\end{array}
\]
We have 
\[
    \bar{k}_{+}\geq -\frac{1}{\lambda_3}~\Leftrightarrow ~ \sqrt{b^2-4ac}\geq b-\frac{2a}{\lambda_3}.
    \]
    The last inequality is clearly satisfied  if 
$b - \frac{2a}{\lambda_3}\leq 0$. Next, we show that it is also satisfied if 
$b - \frac{2a}{\lambda_3}> 0$, when it becomes equivalent to
\begin{equation}\label{prosqrt}
\begin{array}{ll}
&b^2-4ac\geq \left(b-\frac{2a}{\lambda_3}\right)^2
\Leftrightarrow~ b\lambda_3-c\lambda_3^2-a\geq 0\\[1pt]
&\quad\Leftrightarrow~ 2\lambda_3(\lambda_1\lambda_2+\lambda_1\lambda_3+\lambda_2\lambda_3) - \lambda_3^2(\lambda_1+\lambda_2+\lambda_3) - 3\lambda_1\lambda_2\lambda_3 \geq 0\\[1pt]
&\quad \Leftrightarrow-\lambda_3^3+ \lambda_1\lambda_3^2 +\lambda_2\lambda_3^2  - \lambda_1\lambda_2\lambda_3 \geq 0\\[1pt]
&\quad\Leftrightarrow~
\lambda_3(\lambda_3-\lambda_1)(\lambda_3-\lambda_2)\geq 0.
\end{array}
\end{equation}
which follows because $\lambda_1>0>\lambda_2\geq \lambda_3$\,. We also have
\[
\bar{k}_{-}\leq -\frac{1}{\lambda_3}~\Leftrightarrow ~ -b-\sqrt{b^2-4ac}\leq -\frac{2a}{\lambda_3}~\Leftrightarrow ~ \sqrt{b^2-4ac}\geq -b+\frac{2a}{\lambda_3}. 
\]
     The last inequality  was proved in \eqref{prosqrt} for the case where  $ -b+\frac{2a}{\lambda_3}>0$. If otherwise, the inequality is clearly satisfied.

Finally,  noting that $-b+\frac{2a}{\lambda_1} = -2(\lambda_1\lambda_2+\lambda_1\lambda_3+\lambda_2\lambda_3) + 6\lambda_2\lambda_3 = -2\lambda_1\lambda_2-2\lambda_1\lambda_3+4\lambda_2\lambda_3 \geq 0$, we have
\[
 \begin{array}{ll}
    &\bar{k}_{-}\geq -\frac{1}{\lambda_1} ~\Leftrightarrow~ 
-b-\sqrt{b^2-4ac}\geq -\frac{2a}{\lambda_1} ~\Leftrightarrow~ 
b^2-4ac\leq \left(b-\frac{2a}{\lambda_1}\right)^2\\ 
&\quad \Leftrightarrow~
b\lambda_1-c\lambda_1^2-a\leq 0 \\
&\quad \Leftrightarrow~ 
2\lambda_1(\lambda_1\lambda_2+\lambda_1\lambda_3+\lambda_2\lambda_3) - \lambda_1^2(\lambda_1+\lambda_2+\lambda_3) - 3\lambda_1\lambda_2\lambda_3 \leq 0\\
&\quad \Leftrightarrow
-\lambda_1(\lambda_1-\lambda_3)(\lambda_1-\lambda_2)\leq 0,
    \end{array}
    \]
which follows because $\lambda_1>0>\lambda_2\geq \lambda_3$\,. \qed
\end{proof}

\end{document}